\documentclass[11pt, a4paper, oneside, reqno]{amsart}
\usepackage{amsmath}

\usepackage[numbered]{bookmark}
\usepackage{changepage} 
\usepackage[utf8]{inputenc}
\usepackage{graphics, graphicx}
\usepackage{amsmath, amsfonts, amsthm, amssymb}
\usepackage{mathrsfs, mathtools, mathabx}
\usepackage{verbatim} 
\usepackage{enumerate}
\usepackage{xcolor}
\usepackage{cite}
\usepackage{array}
\usepackage[T1]{fontenc}
\usepackage{tikz}

\newtheorem{theorem}{Theorem}[section]

\newtheorem{lemma}[theorem]{Lemma}
\newtheorem{proposition}[theorem]{Proposition}
\newtheorem{corollary}[theorem]{Corollary}

\newtheorem{remark}[theorem]{Remark}
\newtheorem{claim}{Claim}
\newtheorem{example}[theorem]{Example}

\newcommand{\A}{\mathcal{A}} 
\newcommand{\B}{\mathcal{B}}

\newcommand{\Z}{\mathbb{Z}} 
\newcommand{\N}{\mathbb{N}}

\newcommand{\R}{\mathbb{R}}

\newcommand{\I}{\mathcal{I}}

\linespread{1.2}
\hoffset=-15mm \textwidth 160mm \topmargin -0mm \textheight 231mm
\date{}
\title	[Linear cocycles]{Ergodic properties of infinite extension of symmetric interval exchange transformations}
\author[P.~Berk]{Przemys\l aw Berk}
\address{Faculty of Mathematics and Computer Science, Nicolaus
	Copernicus University, ul. Chopina 12/18, 87-100 Toru\'n, Poland}
\email{zimowy@mat.umk.pl}
\author[F.~Trujillo]{Frank Trujillo}
\address{Centre de Recerca Matemàtica, 08193 Bellaterra,
Barcelona, Spain}
\email{ftrujillo@crm.cat}
\author[H.~Wu]{Hao Wu}
\address{Institut für Mathematik, Universität Zürich, Winterthurerstrasse 190, CH-8057 Zürich, Switzerland}
\email{hao.wu@math.uzh.ch}
\begin{document}
	
	\maketitle

 \begin{abstract}
We prove that skew products with the cocycle given by the function $f(x)=a(x-1/2)$ with $a\neq 0$ are ergodic for every ergodic symmetric IET in the base, thus giving the full characterization of ergodic extensions in this family. Moreover, we prove that under an additional natural assumption of unique ergodicity on the IET, we can replace $f$ with any differentiable function with a non-zero sum of jumps. Finally, by considering weakly mixing IETs instead of just ergodic, we show that the skew products with cocycle given by $f$ have infinite ergodic index.
\end{abstract}

\section{Main results}
Let $I$ be a bounded interval, equipped with the Borel $\sigma$-algebra and Lebesgue measure $\lambda_{I}$. Let $T=(\pi,\lambda)$ be an interval exchange transformation given by a permutation $\pi\in S_0^\mathcal A$
and a length vector $\lambda\in\Lambda^{\mathcal A,I}$
{(see Section \ref{sc:IETs} for precise definitions of these objects)}. It is not difficult to see that $T$ preserves $\lambda_{I}$. We say that a permutation is \emph{symmetric} if and only if for any $i = 1, \dots, d$ $\pi_1\circ\pi_0^{-1}(i)=d+1-i$. 

The main objects of study in this article are (real-valued) skew products over interval exchange transformations (IETs). More precisely, if $(X,\mathcal B,\mu)$ is a probability Borel space and $f:X\to \R$ is such that $\int_X f(x)\,d\mu(x)=0$, then a \emph{skew product} $T_f:X\times \R\to X\times \R$ over a measure-preserving map $(X,\mathcal B,\mu,T)$, is a transformation given by
\[
T_f(x,r):=(T(x),x+f(x)).
\]
We will refer to $f$ as a \emph{cocycle}. It is not difficult to see, that $T_f$ preserves the product measure of $\mu$ on $I$ and the Lebesgue measure $\lambda_{\R}$ on $\R$. We will investigate the ergodic properties of $T_f$ with respect to the measure $\lambda_I\otimes\lambda_{\R}$ when $T$ is either an IET or, more generally, a product of $n \ge 2$ copies of an IET. 

\begin{theorem}\label{thm: ergo}
	Let $T$ be an ergodic symmetric IET on $I = [0, 1)$ and let $f(x)=a(x-\tfrac{1}{2})$ for some $a\in\R\setminus\{0\}$. Then, the skew product $T_f:I\times\R\to I\times \R$ is ergodic w.r.t. $\lambda_I\otimes\lambda_{\R}$.
\end{theorem}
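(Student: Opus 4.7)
The plan is to prove that the group $E(f,T)\subseteq \R$ of essential values of the $\R$-cocycle $f$ over $T$ coincides with all of $\R$; by the standard theory of essential values this is equivalent to the desired ergodicity of $T_f$ with respect to $\lambda_I\otimes \lambda_\R$. Since $E(f,T)$ is a closed subgroup of $\R$, it suffices to show it is dense: for every $t\in \R$, every $\varepsilon>0$, and every measurable $A\subset I$ with $\lambda_I(A)>0$, one has to produce an $n\in \N$ and a measurable $A'\subset A$ of positive measure such that $T^n(A')\subset A$ and $|f^{(n)}(x)-t|<\varepsilon$ on $A'$.

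Such recurrence events are to be extracted from the Rokhlin towers provided by Rauzy--Veech induction. At each induction level one has sub-intervals $I_n^\alpha\subset B_n$ on which the first return $T^{h_n^\alpha}$ is a translation by some $\delta_n^\alpha\in \R$, the associated columns partitioning $I$ up to a negligible remainder. Because $T$ is a piecewise translation and $f(x)=a(x-\tfrac12)$ is globally affine, a direct computation gives
\[
f^{(h_n^\alpha)}(x)=a\,h_n^\alpha\,x+c_n^\alpha \qquad (x\in I_n^\alpha),
\]
for a column-dependent constant $c_n^\alpha$. Hence $f^{(h_n^\alpha)}$ spans on $I_n^\alpha$ an interval of length $|a|\,h_n^\alpha\,|I_n^\alpha|$; along balanced subsequences of Rauzy--Veech this length is bounded below, and any target $t$ in the range is realised on a sub-interval $J\subset I_n^\alpha$. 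To then ensure that $T^{h_n^\alpha}(J)$ falls back inside a prescribed $A$, the remaining issue is to keep the return translation $\delta_n^\alpha$ small.

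This is where the symmetric hypothesis enters decisively. For a symmetric permutation, the involution $\sigma(x)=1-x$ realises a time-reversal symmetry of $T$ a.e., and a direct computation shows $f\circ \sigma=-f$. Combinatorially this induces a natural pairing $\alpha\leftrightarrow \alpha^{\ast}$ of tower columns for which $\delta_n^{\alpha}$ and $\delta_n^{\alpha^{\ast}}$ are opposite to leading order; concatenating the two returns produces an effective return time $h_n^\alpha+h_n^{\alpha^{\ast}}$ whose translation cancels, while the affine Birkhoff sum retains slope $a(h_n^\alpha+h_n^{\alpha^{\ast}})$ and hence a range still of definite length. Choosing $t$ within that range delivers the recurrence event sought and thereby proves $E(f,T)=\R$.

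The main obstacle, and what I expect will consume the bulk of the proof, is the quantitative combinatorial control underpinning both ingredients above: justifying that the mirror-column pairing truly forces the return translations to cancel to leading order, and securing the balance estimate $|a|\,h_n^\alpha |I_n^\alpha|\gtrsim 1$ along some subsequence produced purely from the symmetric Rauzy class. Because the statement is claimed for \emph{every} ergodic symmetric IET rather than for Masur--Veech-typical parameters alone, one cannot invoke Oseledets-generic renormalisation estimates; the argument has to exploit the symmetric combinatorics intrinsically, and it is precisely this step which I expect to be delicate.
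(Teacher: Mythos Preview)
Your overall framework---essential values, Rokhlin towers, exploiting the affine form of $S_nf$ and the antisymmetry $f\circ\sigma=-f$---matches the paper's. However, two of your proposed mechanisms are genuinely different from what the paper does, and in one case your choice would fail for the statement as written.

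\textbf{Rauzy--Veech induction does not cover all ergodic symmetric IETs.} You build your towers from Rauzy--Veech induction, but an ergodic symmetric IET can have connections (the paper even gives explicit examples), and classical Rauzy--Veech requires the Keane condition. The paper stresses exactly this point: since the goal is \emph{every} ergodic symmetric IET, one cannot fall back on Rauzy--Veech or on Masur--Veech-generic balance estimates. Instead, the paper induces on intervals of a special \emph{symmetric} form $J=[T^{-m}(\partial I_\alpha),T^m(\partial I_{\hat\alpha}))$ centred at a point $c_\beta$, chosen to be disjoint from all connections. A crucial structural result (their Proposition~3.4) is that $T_J$ is again a symmetric IET on $d-d'$ intervals, where $d'$ counts the non-trivial connections; this is what replaces the Rauzy--Veech combinatorics.

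\textbf{The cancellation does not come from a column pairing.} Your proposed pairing $\alpha\leftrightarrow\alpha^\ast$ with $\delta_n^{\alpha}\approx-\delta_n^{\alpha^\ast}$, followed by concatenation of two returns, is not how the paper gets a bounded Birkhoff sum; and I do not see how to make such a pairing rigorous in general. The paper's mechanism is cleaner: because $T_J$ is symmetric, each Rokhlin tower over $I_\gamma^J$ contains exactly one centre point $c_\sigma$ (some $\sigma\in\A\cup\{\tfrac12\}$) located precisely at the middle of its central level, and their Lemma~3.6 gives $S_{h_\gamma}f(T^{-\lfloor h_\gamma/2\rfloor}(c_\sigma))=0$ exactly. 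So on the base of the tower the Birkhoff sum vanishes at a point, and the affine slope $a\,h_\gamma$ then bounds it by $|a|\,h_\gamma|I_\gamma^J|$ on the whole base. No pairing, no approximate cancellation: the antisymmetry $f\circ\mathcal I=-f$ is used through this centre-point identity, not through a column involution.

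\textbf{Balance and rigidity via a double induction.} You correctly identify that securing $h_n^\alpha|I_n^\alpha|\gtrsim1$ together with small return translation, without typicality assumptions, is the crux. The paper's device is elegant: after inducing on $J_n$, pick the tower $X_n$ of largest measure (so $\mathrm{Leb}(X_n)\ge 1/\#\A$), take its \emph{central level} $\mathfrak J_n$, and induce again on $\mathfrak J_n$. For the largest exchanged subinterval $\mathfrak I_\Gamma^n\subset\mathfrak J_n$ with return time $q_n$, the lower half of $X_n$ is already a tower over $\mathfrak I_\Gamma^n$ of height $\sim h^n/2$ and measure $\gtrsim 1/\#\A^2$; moreover $T^{q_n}$ sends each floor back into the same floor of $X_n$, giving $|T^{q_n}x-x|\le|\mathfrak J_n|\le \#\A/q_n$. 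This yields simultaneously the balance, the rigidity, and (via the centre-point identity on $\mathfrak I_\Gamma^n$) the uniform bound on $S_{q_n}f$ needed to run the essential-values criterion.

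In short: your outline is along the right lines but relies on Rauzy--Veech where it is unavailable, and proposes an unsubstantiated column-pairing where the paper instead proves that symmetric induction preserves symmetry and produces towers with an exact vanishing point for $S_nf$ at their centres.
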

The exceptionality of the above theorem comes from the fact {that we only assume the ergodicity of the IET with respect to $\lambda_I$ (in contrast to many results in the theory of IETs and of skew-products over IETs where generic conditions related to the Rauzy-Veech renormalization procedure are often imposed). Moreover, this assumption is necessary since we can associate to any non-trivial $T$-invariant set $A \subseteq I$ the non-trivial $T_f$-invariant set $A \times \R \subseteq I \times \R$.} Theorem \ref{thm: ergo} gives thus a full characterization of the ergodic skew products over symmetric IETs with cocycle of the form $a(x-\tfrac{1}{2})$, for some $a \in \R\,\setminus\, \{0\}$. 

{The skew products over IETs were already researched with various types of cocycles, although not under such weak assumptions. Recently, the first and second author in \cite{berk_ergodicity_2024} proved that for almost every symmetric IET on $[0,1)$ and a cocycle $f(x)=\chi_{[0,1/2)}-\chi_{[1/2,1)}$ the skew product is ergodic. The final arguments in the proof of Theorem \ref{thm: ergo} are partially inspired by this paper. For linear cocycles, the most relevant is the work of Conze and Fr\k{a}czek in \cite{conze_cocycles_2011}, where the authors studied piece-wise linear cocycles over IETs of periodic type. However, there are only countably many such IETs. 

There are very few results in the literature concerning any ergodic IET. It is worth mentioning here the article \cite{katok_interval_1980} by Katok, where he proved that every IET is partially rigid. A variation of his construction of Rokhlin towers serves later in the proof of Theorem \ref{thm: ergo} to construct partially rigid towers needed to establish the ergodicity of skew products.}

If we only assume the ergodicity of the underlying IET, one of the main obstacles we face is the impossibility of excluding IETs with \emph{connections} {(see Section \ref{sc:IETs} for a precise definition). Let us point out that the existence of connections does not exclude ergodicity. {Indeed, perhaps the most relevant to this article is the example given in \cite{berk_ergodicity_2024} which was a symmetric IET of the interval $[0,1)$ with $\tfrac{1}{2}$ as a discontinuity. There it served as an example of IET taken as a base of a non-ergodic skew product. Here such examples are also covered in Theorem \ref{thm: ergo}. }By dealing with symmetric IETs with connections, we obtain interesting side results on their ergodic properties (see Corollary \ref{cor: emptyconnection} and Corollary \ref{cor: evalue-1}). }

{
Since we cannot use the ergodic properties of Rauzy-Veech induction due to the 
presence of connections, we have to tackle some issues that are usually not a 
problem if one wants to obtain a result only on a full-measure set of IETs. 
Notice that, given an IET, we can always induce on a subinterval and obtain 
another IET but, in general, we do not have control over the combinatorial 
properties of the induced map. Hence, a major step towards proving our main 
result is showing that if we choose the induction interval properly, then the 
induced transformation is a symmetric IET as long as the initial IET $T$ is 
symmetric. This is the content of Proposition \ref{prop:induced_symmetric}. The 
proof of this key property relies on another result that we would like to 
highlight and which generalizes a well-known property of the Rauzy-Veech 
induction, namely, the existence of neighborhoods (simplices) around almost 
every IET so that, for any IET in this neighborhood, the induced map on certain 
\emph{dynamically defined} induction intervals (given by a fixed number of 
iterations of the Rauzy-Veech procedure) leads to the same combinatorics and 
the same \emph{Rokhlin tower decomposition} of the initial intervals (see 
Section \ref{sc:induced_IETs}). We formally state this in Proposition 
\ref{prop: unwinding} (see also Remark \ref{rmk:simplex}). 


}

By imposing an additional generic condition on the IET, we can largely increase the family of cocycles for which we can deduce the ergodicity of the skew product. 

\begin{theorem}\label{thm: uniqergo}
Let $T$ be a uniquely ergodic symmetric IET on $I = [0, 1)$ and let $f(x)=a(x-\tfrac{1}{2})+f_0(x)$, for some $a\in\R\setminus\{0\}$ and some differentiable function $f_0$ satisfying $\int_I Df_0(x)\, dx=0$. Then, the skew product $T_f:I\times\R\to I\times \R$ is ergodic w.r.t. $\lambda_I\otimes\lambda_{\R}$.
\end{theorem}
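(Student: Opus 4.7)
The plan is to deduce Theorem \ref{thm: uniqergo} from Theorem \ref{thm: ergo} by showing that the differentiable perturbation $f_0$ contributes only a uniformly bounded amount to the Birkhoff sums $S_{h_n}f$ along the partially rigid Rokhlin towers used in the proof of Theorem \ref{thm: ergo}, so that the (closed) group of essential values of the cocycle $f$ over $T$ still equals $\R$. Two preliminary observations to extract from the hypotheses are: first, since $f_0$ is differentiable with $\int_I Df_0\,dx = 0$, the function $f_0$ is absolutely continuous with $f_0(0) = \lim_{x\to 1^-}f_0(x)$, so it extends continuously to the circle $\T = I/\{0\sim 1\}$ and satisfies $\mathrm{Var}(f_0) \leq \|Df_0\|_{L^1(I)}$; second, the zero-mean condition $\int_I f\,d\lambda_I = 0$, together with $\int_I a(x-\tfrac{1}{2})\,d\lambda_I = 0$, forces $\int_I f_0\,d\lambda_I = 0$.

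The key estimate I would then establish is a Denjoy--Koksma-type bound for $f_0$. Since $T$ is uniquely ergodic, one can produce a sequence of nested induction intervals $J_n\to\{0\}$ (coming from Rauzy--Veech or a compatible inducing procedure, which under unique ergodicity admits infinitely many iterations) with return times $q_n\to\infty$, over which $I$ decomposes into a Rokhlin tower with pairwise disjoint floors. The classical bounded-variation argument then yields a constant $V > 0$ such that
\[
\sup_{x\in I}\, |S_{q_n} f_0(x)| \;\leq\; V\cdot \mathrm{Var}(f_0) \;=:\; M
\]
for every $n$. Arranging that the tower heights $h_n$ in the construction from Theorem \ref{thm: ergo} satisfy the same kind of estimate (either by choosing $h_n = q_n$, or by running the bounded-variation argument directly on the partially rigid towers, using the disjointness of their floors), one obtains $|S_{h_n}f_0|\leq M$ uniformly on the tower base $B_n$.

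With this uniform bound, one can transfer the essential-value argument of Theorem \ref{thm: ergo} to $f$. Given $v\in\R$ and $\varepsilon>0$, subdivide $B_n$ into roughly $\mathrm{Var}(f_0)/\varepsilon$ sub-intervals on each of which $S_{h_n}f_0$ varies by less than $\varepsilon$; fix one such piece, on which $S_{h_n}f_0\approx c_n$ for some $|c_n|\leq M$, that intersects in positive measure the witnessing set produced by Theorem \ref{thm: ergo} applied to the shifted target $v - c_n$. On the resulting positive-measure set $A_n$ one has $T^{h_n}A_n$ inside a reference set and
\[
|S_{h_n}f(x)-v| \;\leq\; |S_{h_n}(a(\,\cdot\,-\tfrac{1}{2}))(x) - (v - c_n)| + |S_{h_n}f_0(x) - c_n| \;<\; 2\varepsilon,
\]
so $v$ is an essential value of $T_f$. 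Since $v$ is arbitrary and the group of essential values is closed, it equals $\R$, proving ergodicity. The main obstacle I foresee is precisely the coordination of two different combinatorial constructions: the partially rigid Katok-type towers of Theorem \ref{thm: ergo} versus the Rauzy--Veech towers underlying Denjoy--Koksma. In order to import the uniform bound on $|S_{h_n}f_0|$ onto the towers used in Theorem \ref{thm: ergo}, one must either pass to a common refinement, or run a bounded-variation Denjoy--Koksma argument directly on the partially rigid towers, relying on the disjointness of their floors and on the bounded variation of $f_0$; this combinatorial matching is where the technical work will concentrate.
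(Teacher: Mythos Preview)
Your route differs substantially from the paper's. The paper does not reduce to Theorem~\ref{thm: ergo}, does not isolate $f_0$, and does not seek any Denjoy--Koksma bound on $S_{q_n}f_0$. Instead it reruns Proposition~\ref{prop: effcriterion} directly with the full cocycle $f$, using the very same towers $\Xi_n$ and times $q_n$ built in the proof of Theorem~\ref{thm: ergo}. Unique ergodicity is invoked exactly once, to verify the two-sided derivative estimate in condition~\eqref{cond:derivative}: since $\int_I f_0'=0$ and $f_0'$ is continuous, the ergodic theorem gives $\tfrac{1}{q_n}S_{q_n}f_0'\to 0$ uniformly on $I$, whence $(a-\varepsilon)q_n\le S_{q_n}f'\le(a+\varepsilon)q_n$ for all large $n$. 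The paper then asserts that every other verification goes through as for Theorem~\ref{thm: ergo}. There is no ``shifted target'' step and no appeal to the essential values of the purely linear cocycle.

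Your plan, as sketched, has two concrete gaps. First, unique ergodicity does \emph{not} ensure that Rauzy--Veech can be iterated infinitely often: connections are fully compatible with unique ergodicity, which is precisely why the paper develops its own inducing scheme in Proposition~\ref{prop:induced_symmetric} rather than relying on Keane's condition. So no Rauzy--Veech towers are available, and the classical Denjoy--Koksma inequality has no carrier. Running the bounded-variation argument directly on the single Katok-type tower $\bigsqcup_{i<q_n}T^i(\mathfrak I_\Gamma^n)$ also fails, because that tower covers only a fixed positive fraction of $I$, not all of it; the usual estimate then acquires an extra term $|\mathfrak I_\Gamma^n|^{-1}\bigl|\int_{\text{tower}}f_0\bigr|$, which is in general of order $q_n$ rather than $O(1)$. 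Second, your shifted target $v-c_n$ depends on $n$, whereas Theorem~\ref{thm: FUcriterion} is stated for a fixed value; passing to a subsequence along which $c_n\to c$ while keeping all the tower constructions aligned is not automatic. Your instinct that the contribution of $f_0$ to $S_{q_n}f$ deserves attention is reasonable---indeed Lemma~\ref{lem: berktrujillo}, which produced the vanishing at the central point in Theorem~\ref{thm: ergo}, requires $f\circ\mathcal I_I=-f$, a hypothesis a generic $f_0$ does not satisfy, and the paper's ``the rest follows analogously'' passes over this---but a separate Denjoy--Koksma reduction is neither what the paper does nor, in the form you propose, something that can be made to work here.
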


Finally, one may ask about the ergodic index of the skew product under consideration. Recall that a measure-preserving transformation $(X,\mathcal B,\mu,T)$ has \emph{infinite ergodic index} if and only if for every $n\in\N$ the transformation $(X^{\times n},\mathcal B^{\otimes n}, \mu^{\otimes n},T^{\times n})$ is ergodic, {where the superscripts $\times n$ and $\otimes n$ denote $n$-fold products of the objects. 
}

If we consider this property for $T_f$ on $X\times \R$, it is easy to see that $T_f^{\times n}$ is a skew product over $T^{\times n}$ with the cocycle given by the function $f^{\times n}:X^{\times n}\to \R^n$, where $f^{\times n}(x_1,\ldots,x_n):=(f(x_1),\ldots,f(x_n))$. It is easy to see that a natural obstruction for having an infinite ergodic index is when $T^{\times k}$ is not ergodic, for some $k\in\N$. It turns out that in our case this is the only obstacle.

\begin{theorem}\label{thm: index}
	Let $T$ be a weakly mixing 
 symmetric IET on $I = [0, 1)$ and let $f(x)=a(x-\tfrac{1}{2})$, for some $a\in\R\setminus\{0\}$. Then, the skew product $T_f:I\times\R\to I\times \R$ has infinite ergodic index.
	\end{theorem}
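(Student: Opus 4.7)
The plan is to establish ergodicity of $T_f^{\times n}$ for every $n\ge 1$. Writing $T_f^{\times n}$ as a skew product over $T^{\times n}$ with the $\R^n$-valued cocycle $F(x_1,\ldots,x_n):=(f(x_1),\ldots,f(x_n))$, ergodicity is equivalent to $E(F,T^{\times n})=\R^n$, where $E(F,T^{\times n})$ denotes the Schmidt group of essential values. Note that $T^{\times n}$ is ergodic as a consequence of the weak mixing of $T$, so the essential-value criterion applies in the usual form. Since $E(F,T^{\times n})$ is automatically a closed subgroup of $\R^n$, it suffices to produce a dense collection of essential values.

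The central mechanism is to revisit the Rokhlin-tower construction from the proof of Theorem~\ref{thm: ergo}, which, given $c\in\R$ and $\varepsilon>0$, provides a partially rigid tower $(B,h)$ for $T$ with $|S_hf-c|<\varepsilon$ on a set of positive proportion inside $B$. For $T^{\times n}$, the product $B^{\times n}$ is a Rokhlin tower of the same height $h$, and the vector-valued Birkhoff sum $S_hF$ over this tower is simply $(S_hf(x_1),\ldots,S_hf(x_n))$. To realise a target vector $(c_1,\ldots,c_n)$ as an essential value it suffices to locate positive-measure sub-bases $B_{c_1},\ldots,B_{c_n}\subseteq B$ on which $S_hf$ is close to $c_1,\ldots,c_n$ respectively; then the product $B_{c_1}\times\cdots\times B_{c_n}$ witnesses $(c_1,\ldots,c_n)\in E(F,T^{\times n})$ via the standard essential-value test. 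Running this argument across a countable dense collection of target vectors in $\R^n$ and invoking closure then yields $E(F,T^{\times n})=\R^n$.

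The main obstacle is synchronisation: a single tower height $h$ must simultaneously realise all of the prescribed values $c_1,\ldots,c_n$, whereas in the proof of Theorem~\ref{thm: ergo} only one target value $c$ has to be achieved. This is precisely the point at which the weak-mixing assumption (strictly stronger than the ergodicity used in Theorem~\ref{thm: ergo}) is exploited. Weak mixing of $T$ provides the required decorrelation in the iterative Katok-type construction of partially rigid towers, ensuring that the conditional distribution of $S_hf$ on the base $B$ has enough spread to admit positive-measure level sets around a dense family of values, which can then be selected independently for each coordinate. This refined tower construction, combined with the ergodicity of $T^{\times n}$ and the closure of $E(F,T^{\times n})$, delivers the conclusion $E(F,T^{\times n})=\R^n$ and therefore ergodicity of $T_f^{\times n}$ for every $n$, i.e.\ infinite ergodic index of $T_f$.
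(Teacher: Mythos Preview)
Your overall skeleton---write $T_f^{\times n}$ as a skew product over $T^{\times n}$ with cocycle $F=(f,\ldots,f)$, take the product $B^{\times n}$ of the Rokhlin tower base from the proof of Theorem~\ref{thm: ergo}, and select sub-bases $B_{c_j}\subseteq B$ coordinate by coordinate---is exactly what the paper does (it simply reapplies Proposition~\ref{prop: effcriterion} with $m=n$). However, your diagnosis of the ``synchronisation obstacle'' and its resolution is off. Weak mixing is \emph{not} used to create spread in the distribution of $S_{q_n}f$ on the tower base, and no ``decorrelation in the iterative Katok-type construction'' is invoked anywhere. The spread comes entirely from the elementary derivative identity $S_{q_n}f'\equiv aq_n$ together with $|I_n|\asymp 1/q_n$: these force $S_{q_n}f$ to sweep out an interval of fixed positive length as $x$ ranges over the base $I_n$. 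Since this holds for the \emph{same} tower regardless of how many coordinates you carry, you may independently choose sub-intervals of $I_n$ hitting different targets $c_1,\ldots,c_n$ within that range, and their product is the desired positive-measure subset of $I_n^{\times n}$. This yields an open $n$-cube of essential values, which already generates $\R^n$ as a closed subgroup; one does not need to reach a dense family of target vectors.

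Weak mixing enters only where you first used it: to guarantee that $T^{\times n}$ is ergodic, so that the essential-value criterion (Theorem~\ref{thm: essvalues}) actually delivers ergodicity of the skew product. Your final paragraph invoking weak mixing a second time, for the tower construction itself, does not correspond to any argument in the paper and, as written, is not a proof of anything concrete. Drop that paragraph, replace it by the derivative observation above, and your sketch becomes the paper's proof.
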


\section{Interval exchange transformations}
\label{sc:IETs}


\subsection{Notations and basic properties} An \emph{interval exchange transformation (IET)} $T$ on a bounded interval $I$ is a piecewise linear bijection of $I$, with finite number of intervals of continuity, on which $T$ acts via translation. {For convenience and without loss of generality, we assume the interval $I$ to be of the form $[a, b)$, for some $a, b \in \R$, and the IETs to be right-continuous.}

 More precisely, there exists $\A$ is an alphabet of $d\in\N$ elements, a permutation $\pi=\binom{\pi_0}{\pi_1}$ with $\pi_0,\pi_1:\mathcal A\to\{1,\ldots,d\}$ and a collection of left closed and right open subintervals $\{I_{\alpha}\}_{\alpha\in\mathcal A}$ such that and $\bigsqcup_{\alpha\in\A} I_{\alpha}=I$, $T|_{I_{\alpha}}$ acts via translation, and $\pi_0$ and $\pi_1$ describe the order of intervals respectively before and after 
action of $T$. It is easy to check that $T$ preserves Lebesgue measure on $I$ and that the parameters $(\pi,\lambda)$ fully describe the dynamics of $T$, where $\lambda=[\lambda_\alpha]_{\alpha\in\A}:=[|I_{\alpha}|]_{\alpha\in\A}\in \Lambda^{\mathcal A,I}$ is the vector of lengths of intervals $I_{\alpha}$, with $\Lambda^{\mathcal A,I}:=\{\lambda\in\R^{\A}_{+}\mid \sum_{\alpha\in\A}\lambda_{\alpha}=|I| \}$. {Moreover, we always assume that $\pi$ is \emph{non-reducible}, that is
\[
\pi_1\circ\pi^{-1}_0\{1,\ldots,k\}=\{1,\ldots,k\}\ \Rightarrow\ k=d.
\]
Otherwise, we can decompose $T$ into two non-trivial IETs and consider their properties separately. We denote by $S_0^{\A}$ the set of all non-reducible permutations of alphabet $\A$.
}

For every $\alpha\in\A$, we denote by $\partial I_{\alpha}$ the left endpoint of $I_{\alpha}$ and by $c_{\alpha}$ its center point. We say that an IET $T$ has a \emph{connection} if there exist $\alpha,\beta\in\A$ with $\pi_0(\beta)\neq 1$, $\pi_1(\alpha)\neq 1$ and $n\in\N_+$ such that 
\[
T^{-n}(\partial I_\beta)=\partial I_{\alpha}.
\] 
By connection we often mean the orbit segment $\{T^{-k}(\partial I_\beta)\}_{k=-n,\ldots,0}$. 
If such connection exists, we denote $M(\beta)=M(T,\beta):=\min_{\alpha\in\A}\min\{n\in\N_+\mid T^{-n}(\partial I_\beta)=\partial I_{\alpha} \}$. Otherwise, we write $M(\beta)=\infty$. Similarly, we denote $N(\alpha)=N(T,\alpha):=\min_{\beta\in\A}\min\{n\in\N_+\mid T^{n}(\partial I_\alpha)=\partial I_{\beta} \}$ and write $N(\alpha)=\infty$ if such connection does not exist. {Note that we always have $T(\partial I_{\pi_1^{-1}(1)})=\partial I_{\pi_0^{-1}(1)}$, a trivial connection. Hence, we define $M(\pi_0^{-1}(1)):=1$ and $N(\pi_1^{-1}(1)):=1$.}

Note that the existence of a non-trivial connection implies that some 
non-trivial integer combination of lengths of exchanged intervals is equal to 
$0$. Thus, if the length vector is \emph{rationally independent}, i.e., if 
$\sum_{\alpha\in\A}r_{\alpha}\lambda_\alpha=0$ for some $(r_{\alpha})_{\alpha 
\in \A} \in\mathbb Q^{\A}$ implies that $r_{\alpha}=0$, for all $\alpha\in\A$, 
then there cannot be any connection. Hence, almost every IET has no 
connections. 

However, in this article we consider the class of \emph{all} ergodic IETs, and, let us recall, the existence of connections does not exclude ergodicity. Nevertheless, it is well-known that if all $\partial I_{\beta}$ are endpoints of connections, then such an IET cannot be ergodic. For the sake of completeness, let us provide a proof of this fact.

\begin{lemma}\label{lem: notallconnections}
	Let $T:I\to I$ be an interval exchange transformation given by a permutation $\pi$ and length vector $\lambda$. If for every $\beta\in \A\,\setminus\,\{\pi_0^{-1}(1)\}$ we have $M(\beta)<\infty$, then $T$ has only periodic orbits. {More precisely, the base interval $I$ can be decomposed in a finite number of periodic components, given by semi-closed intervals, such that the period is uniform on each of these components.}
\end{lemma}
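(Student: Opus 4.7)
The plan is to construct a finite $T$-invariant subset $F \subseteq I$ containing every discontinuity of $T$; once such an $F$ is in hand, cutting $I$ at the points of $F$ will yield the desired decomposition of $I$ into semi-closed periodic intervals. The combinatorial engine will be a predecessor map $\phi : \A \to \A$, characterised for each $\beta \in \A$ by the equation
\[
T^{M(\beta)}(\partial I_{\phi(\beta)}) = \partial I_\beta.
\]
The hypothesis $M(\beta) < \infty$ for every $\beta \neq \pi_0^{-1}(1)$, together with the convention $M(\pi_0^{-1}(1)) = 1$ (which forces $\phi(\pi_0^{-1}(1)) = \pi_1^{-1}(1)$ via the trivial connection), makes $\phi$ a well-defined self-map of $\A$.

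The critical step will be to show that $\phi$ is injective, so that $\phi$ is in fact a permutation of $\A$. Suppose for contradiction $\phi(\beta) = \phi(\beta')$ with $\beta \neq \beta'$, and assume without loss of generality $M(\beta) < M(\beta')$. Applying $T^{M(\beta')-M(\beta)}$ to the defining equation for $\beta$ produces $T^{M(\beta')-M(\beta)}(\partial I_\beta) = \partial I_{\beta'}$, exhibiting a backward connection ending at $\partial I_{\beta'}$ of length $M(\beta') - M(\beta) < M(\beta')$ and contradicting the minimality defining $M(\beta')$. Once $\phi$ is known to be a permutation of the finite set $\A$, every $\beta$ lies in a $\phi$-cycle of length $k_\beta \ge 1$, and iterating the defining relation around this cycle yields
\[
T^{N_\beta}(\partial I_\beta) = \partial I_\beta, \qquad N_\beta := \sum_{i=0}^{k_\beta-1} M(\phi^i(\beta)) \ge 1.
\]
Hence every left endpoint $\partial I_\beta$ is $T$-periodic and its full $T$-orbit is finite.

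Setting $F := \bigcup_{\beta \in \A} \{T^j(\partial I_\beta) : j \in \Z\}$ then gives a finite $T$-invariant subset of $I$ that contains every discontinuity of $T$. Ordering $F = \{a = y_0 < y_1 < \cdots < y_{m-1}\}$ and setting $y_m := b$, one obtains the partition $I = \bigsqcup_{i=1}^m J_i$ with $J_i := [y_{i-1}, y_i)$. Each restriction $T|_{J_i}$ is a single translation, since the discontinuities of $T$ occur only at the left endpoints $y_{i-1}$, and a length-counting argument (using that $T|_F$ is a bijection of $F$, so each $T(J_i)$ contains exactly one point of $F$, namely $T(y_{i-1})$) forces $T(J_i) = J_{\sigma(i)}$ for some permutation $\sigma$ of $\{1, \ldots, m\}$. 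For each $\sigma$-cycle of length $p$, $T^p$ restricts on the corresponding $J_i$ to an orientation-preserving self-isometry of $J_i$, and is therefore the identity; so every point of $J_i$ has period $p$, yielding the desired decomposition.

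The main obstacle will be the injectivity of $\phi$: it is precisely what upgrades the trivial eventual periodicity (guaranteed by finiteness of $\A$) to genuine $\phi$-periodicity, and thence to $T$-periodicity of every $\partial I_\beta$. A secondary technical point will be the verification that $T$ sends the partition $\{J_i\}$ onto itself without splitting pieces; here one must carefully combine the right-continuity of $T$ with the two-sided $T$-invariance of $F$ to guarantee that the left-limits of $T$ at each $y_i$ again lie in $F \cup \{b\}$.
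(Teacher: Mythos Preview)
Your proof is correct and takes a genuinely different, more combinatorial route than the paper's. Both arguments construct the same finite $T$-invariant set $F$ (the union of the orbits of the endpoints $\partial I_\beta$) and partition $I$ by it, but they diverge at two places. First, to see that $F$ is finite, the paper simply asserts that $\{T^{n}(\partial I_{\alpha}) : n\in\Z,\ \alpha\in\A\}$ equals the visibly finite set $\{T^{-n}(\partial I_{\alpha}) : \alpha\in\A,\ 0\le n\le M(\alpha)\}$; the implicit justification is that the latter set is $T^{-1}$-invariant and finite, hence $T$-invariant. You instead introduce the predecessor map $\phi$ and prove it is a permutation of $\A$, which gives explicit periods $N_\beta$ for each $\partial I_\beta$ and makes the finiteness of $F$ transparent. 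Second, to show each partition interval $[a,b)$ is periodic, the paper invokes Poincar\'e recurrence to produce a return time $N$ and then argues that the return must be exact ($T^N([a,b))=[a,b)$); you avoid measure theory entirely by showing that $T$ permutes the intervals $J_i$ via some $\sigma \in S_m$, so that the period on $J_i$ is simply the length of the $\sigma$-cycle containing $i$. Your approach is more elementary and constructive (no Poincar\'e recurrence, explicit periods), while the paper's is shorter but leaves the two-sided invariance of the finite set and the exactness of the return a bit implicit.
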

\begin{proof}
	Note that by assumption the set of points 
	\[
	\{T^{n}(\partial I_{\alpha})\mid n\in\Z,\quad\alpha\in \A \}=\{T^{-n}(\partial I_{\alpha})\mid \alpha\in\A\quad\text{and}\quad 0\le n\le M(\alpha) \}
	\]
	 is finite. Consider the partition given by those points and let $[a,b)$ be an element of this partition. 
	
	Note that $T^n$ acts continuously on $[a,b)$ for all $n\in\N$. Indeed, the only possible points of discontinuity of $T$ are $\{\partial I_{\alpha}\}_{\alpha\in\A}$, hence, if for some $n\in\N$ the map $T^n$ did not act continuously on $[a,b)$, there would exist $\beta\in\mathcal A$ such that $\partial I_{\beta}\in T^{n-1}\big([a,b)\big)$, which contradicts the choice of $[a,b)$. In particular, it follows that $T^n\big([a,b)\big)$ is an interval, for any $n \in \N$.
	
 By Poincaré's recurrence theorem, there exists $N\in\N$ such that 
	\[
	T^{N}\big([a,b)\big)\cap [a,b)\neq \emptyset.
	\]
	This implies that $T^{N}\big([a,b)\big)=[a,b)$. Indeed, otherwise either $T^{-N}(a)$ or $T^{N}(a)$ belongs to $[a,b)$. Since $a$ is in the orbits of one of the points $\{\partial I_{\alpha}\}_{\alpha\in\A}$, this yields a contradiction.
	
	To sum up, $T^{N}\big([a,b)\big)=[a,b)$ and $T^N$ acts continuously on $[a,b)$. Since $T$ is a piecewise translation, then so is $T^N$. Thus $T^N|_{[a,b)}$ is the identity map on $[a,b)$, which finishes the proof. 
\end{proof}

\begin{remark}
	By proceeding symmetrically, one can replace in Lemma \ref{lem: notallconnections} the endpoints of connections with their initial points.
\end{remark}

One of the main consequences of the above lemma is the following.

\begin{corollary}\label{cor: existenceofinfiniteorbit}
	Assume that $T$ is an ergodic IET. Then there exists {$\beta \in\mathcal A \,\setminus\, \{\pi_0^{-1}(1)\}$ such that $M(\beta) = +\infty$. }
\end{corollary}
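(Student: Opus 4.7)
The statement is an immediate consequence of Lemma \ref{lem: notallconnections} combined with ergodicity, so I plan to argue by contraposition. Suppose for contradiction that $M(\beta) < +\infty$ for every $\beta \in \mathcal A \setminus \{\pi_0^{-1}(1)\}$; the goal is then to produce a non-trivial $T$-invariant measurable set of intermediate $\lambda_I$-measure.

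By Lemma \ref{lem: notallconnections}, this hypothesis yields a finite partition of $I$ into semi-closed intervals on each of which some iterate $T^{N_j}$ acts as the identity. In particular, the period function $p : I \to \N_{+}$, assigning to $x$ its minimal period under $T$, is well defined, measurable, $T$-invariant, and takes only finitely many values. Its level sets therefore form a finite $T$-invariant measurable partition of $I$, and ergodicity forces $p \equiv N$ almost everywhere for some integer $N \geq 1$, i.e., $T^N = \id$ a.e.\ on $I$.

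To derive the contradiction, pick any measurable $A \subseteq I$ with $0 < \lambda_I(A) < |I|/N$ (such a set exists because $|I| > 0$ and $N$ is finite), and set $B := \bigcup_{i=0}^{N-1} T^i(A)$. Then $B$ is $T$-invariant modulo null sets thanks to $T^N = \id$ a.e., while $0 < \lambda_I(B) \leq N \lambda_I(A) < |I|$, contradicting ergodicity. Since the substantial work was carried out in Lemma \ref{lem: notallconnections}, I do not expect any genuine obstacle here; the only point to treat carefully is the passage from \emph{finitely many uniform-period components} to \emph{single uniform period a.e.}, which is precisely what the level-set argument applied to $p$ provides.
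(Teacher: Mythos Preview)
Your argument is correct and follows essentially the same route as the paper: assume all $M(\beta)$ are finite, invoke Lemma~\ref{lem: notallconnections} to obtain periodicity, and then exhibit a non-trivial $T$-invariant set. The paper's version is slightly more direct: instead of first passing through the period function to obtain a single global period $N$, it simply takes one periodic component $[a,b)$ with $T^N|_{[a,b)}=\id$ and observes that $\bigcup_{i=0}^{N-1}T^i\big([a,\tfrac{a+b}{2}]\big)$ is already a non-trivial invariant set. Your detour through the level sets of $p$ is harmless but unnecessary, since a single periodic block already suffices to contradict ergodicity.
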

\begin{proof}
{Assume, for the sake of contradiction, that $T$ is ergodic but that the conclusion does not hold. Then, by Lemma \ref{lem: notallconnections}, there exists a non-empty semi-closed interval $[a, b) \subseteq I$ and $N \geq 1$ such that $T^N\mid_{[a, b)}$ is the identity map in $[a, b)$. Therefore the set $\bigcup_{i = 0}^{N - 1} T^i\big( \big[a, \tfrac{a + b}{2}\big]\big)$ is a non-trivial $T$-invariant set, which contradicts the ergodicity of $T$.}
\end{proof}

\subsection{Induced IETs}\label{sc:induced_IETs}
Throughout the proofs of the main results of this paper, we will often use the first return map of $T$ to a subinterval $J \subseteq I$, {which we denote by $T_J: J \to J$. More precisely, we define $T_J$ as $x \mapsto T^{r_J(x)}(x)$, where $r_J: J \to \N$ is given by
\[ r_J(x):= \min\{n \geq 1 \mid T^n(x) \in J\}.\]
We sometimes refer to $T_J$ as the \emph{induced map of $T$ to $J$}.


A priori the map $T_J$ is not necessarily well-defined for \emph{all} points in $J$, although Poincaré's recurrence theorem guarantees that $T_J$ is well-defined in a full Lebesgue measure subset of $J$. However, it is well known (see, e.g., \cite[\S 3]{veech_interval_1978}) that for any subinterval $J=[a_J,b_J)\subseteq I$ the induced map $T_J$ is an IET of at most $d+2$ intervals, where the possible discontinuities are given by preimages of the discontinuities of $T$ (at most $d - 1$ points) and of the endpoints of $J$ (at most $2$ points, not necessarily disjoint with the previous set). 

More precisely, the possible discontinuity points of $T_J$ are given by
\[\{T^{-m_{J,\alpha}}(\partial I_{\alpha})\}_{\alpha\in \A \,\setminus\, \{\pi_0^{-1}(1)\}}, \qquad\ m_{J,\alpha}:=\inf\{n\geq 0 \mid T^{-n}(\partial I_{\alpha})\in \mathring{J} \}, \quad \text{ for } \alpha \in \A,\]
together with 
\[T^{-m_l}(a_J), \qquad m_l :=\inf\{n\ge 0 \mid T^{-n}(a_J)\in \mathring{J}\},\]
if $a_J$ is different from the left endpoint of $I$, and
\[T^{-m_r}(b_J), \qquad m_r :=\inf\{n\ge 0 \mid T^{-n}(b_J)\in \mathring{J}\},\]
if $b_J$ is different from the right endpoint of $I$,} where the preimages for 
which $m_{J, \alpha}$ (resp. $m_l$ or $m_r$) is $+\infty$ are disregarded. 
However, note that if $T$ is minimal, which is the case if $T$ is ergodic (see 
Lemma \ref{lem: minimality}), all the above notions are finite.

Moreover, if $T$ is ergodic and $J=[a_J,b_J)\subseteq I$ is chosen so that 
\begin{equation}\label{eq: paramofJ}
\begin{gathered}
\text{$a_J=T^{m_0}(\partial I_{\alpha_J})$ and $b_J=T^{n_0}(\partial I_{\beta_J})$, \quad for some $\alpha_J,\beta_J\in\mathcal A$ and $m_0,n_0\in\Z$,} \\
T^{m}(\partial I_{\alpha_J})\notin J, \qquad \text{ for any } m \in \{0, \ldots, m_0\} \text{ with } m \neq m_0 \\
T^n(\partial I_{\beta_J})\notin J \qquad \text{ for any } n \in \{0, \ldots, n_0\} \text{ with } n \neq n_0,
\end{gathered}
\end{equation}
then the induced map $T_J$ can be seen as an IET of at most $d$ intervals. 
Indeed, in this case, the discontinuities of $T_J$ belong to the set $
\{T^{-m_{J,\alpha}}(\partial I_{\alpha})\}_{\alpha\in \A\,\setminus\,\{\pi_0^{-1}(1)\}}.$ Analogously, if $J$ is of the form \eqref{eq: paramofJ} the discontinuities of $T_J^{-1}$ are contained in 
\[
\{T^{n_{J,\alpha}}(\partial I_{\alpha})\}_{\alpha\in \A\,\setminus\, \{ \pi_1^{-1}(1)\}},\ \quad \text{where}\ n_{J,\alpha}:=\min\{n\geq 1\mid T^{n}(\partial I_{\alpha})\in J\} \text{ for any } \alpha \in \A.
\]
If, in addition, $T$ has no connections, then the previous two sets have exactly $d - 1$ elements, and $T_J$ can be naturally seen as an IET on $d=\#\A$ intervals and identified with an element $(\pi_J, \lambda_J) \in S_0^\A \times \Lambda^{\A, J}$.

{The following simple auxiliary fact tells us what happens if $T$ has 
connections.}
\begin{lemma}\label{lem: alphabetafetrconnections}
Let $T$ be an ergodic interval exchange transformation of $d=\#\A$ intervals and let $J$ be a subinterval of the form \eqref{eq: paramofJ}. Then $T_J$ can be considered as an interval exchange of $d_J$ intervals, where
\[
d_J:= d - \#\{\alpha\in\A\,\setminus\,\{\pi_0^{-1}(1)\} \mid m_{J,\alpha}\ge M(\alpha)\}.
\]
In particular, if $J$ does not contain any point from any connection, then $d-d_J$ is equal to the number of non-trivial connections of $T$.
\end{lemma}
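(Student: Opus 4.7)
The plan is to count the effective discontinuities of $T_J$. From the discussion preceding the lemma, because $J$ has the form \eqref{eq: paramofJ}, the discontinuities of $T_J$ form a subset of $D := \{T^{-m_{J,\alpha}}(\partial I_\alpha) \mid \alpha \in \A \setminus \{\pi_0^{-1}(1)\}\}$ and the number of intervals exchanged by $T_J$ equals $|D|+1$. Setting $\mathcal S := \{\alpha \in \A \setminus \{\pi_0^{-1}(1)\}\mid m_{J,\alpha}\ge M(\alpha)\}$, the lemma reduces to proving $|D| = (d-1) - |\mathcal S|$.

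The first step is to identify coincidences among the candidate discontinuities. If $\alpha \in \mathcal S$, let $\alpha' \in \A$ be the unique label with $T^{-M(\alpha)}(\partial I_\alpha) = \partial I_{\alpha'}$. A short argument — using the minimality in the definitions of $m_{J,\alpha}$ and $m_{J,\alpha'}$ together with the fact that the backward orbit of $\partial I_\alpha$ first enters $\mathring J$ only after passing through $\partial I_{\alpha'}$ — yields $m_{J,\alpha'}=m_{J,\alpha}-M(\alpha)$ and therefore $T^{-m_{J,\alpha}}(\partial I_\alpha)=T^{-m_{J,\alpha'}}(\partial I_{\alpha'})$. So each element of $\mathcal S$ collapses onto a strictly earlier label along its connection.

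I would then organize the bookkeeping using the \emph{connection forest} whose edges are $\alpha \mapsto \alpha'$ as above. Using the minimality in the definition of $M$, one checks that this assignment is injective, so the forest is a disjoint union of finite chains rooted at labels $\alpha_0$ with $M(\alpha_0)=\infty$ (which are never in $\mathcal S$ since $m_{J,\alpha_0}$ is finite by the ergodicity of $T$, cf.\ Lemma \ref{lem: notallconnections}). Along each chain, the equivalence classes of coincident discontinuities are the maximal consecutive blocks starting either at the root or at some $\alpha$ with $m_{J,\alpha}<M(\alpha)$; in particular, exactly one representative per class lies outside $\mathcal S$ and all remaining members of the class lie inside $\mathcal S$. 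Summing over chains gives $|D| = (d-1)-|\mathcal S|$ and hence $d_J = d-|\mathcal S|$.

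For the last claim, if $J$ contains no point of any connection, then for every $\alpha$ with $M(\alpha)<\infty$ the orbit segment $\{T^{-k}(\partial I_\alpha)\}_{k=0}^{M(\alpha)}$ is disjoint from $J$, forcing $m_{J,\alpha}>M(\alpha)$. Hence $\mathcal S$ becomes $\{\alpha \in \A\setminus\{\pi_0^{-1}(1)\}\mid M(\alpha)<\infty\}$, whose cardinality equals the number of non-trivial connections (each such $\alpha$ is in bijection with its connection $\{T^{-k}(\partial I_\alpha)\}_{k=0}^{M(\alpha)}$). I expect the main delicate point of the argument to be the completeness claim for the connection forest: all coincidences in $D$ must come from this structure. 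This follows from invertibility of $T$ — if two candidate discontinuities coincide, their backward orbits share a point, and thus one of the corresponding $\partial I_\alpha$ lies in the backward orbit of the other, producing an edge in the forest.
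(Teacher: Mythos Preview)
Your proof is correct and follows the same approach as the paper's: both show that each $\alpha\in\mathcal S$ satisfies $T^{-m_{J,\alpha}}(\partial I_\alpha)=T^{-m_{J,\alpha'}}(\partial I_{\alpha'})$ where $\partial I_{\alpha'}=T^{-M(\alpha)}(\partial I_\alpha)$, so each such $\alpha$ removes one element from $D$. Your connection-forest bookkeeping and the explicit completeness argument in the final paragraph make the counting more rigorous than the paper's terse ``repeat the above reasoning for all $\alpha\in\A$ satisfying $m_{J,\alpha}\ge M(\alpha)$.''
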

\begin{proof}
Since we know that the discontinuities of $T_J$ belong to $\{T^{-m_{J,\alpha}}(\partial I_{\alpha})\}_{\alpha\in \A\,\setminus\,\{\pi_0^{-1}(1)\}}$ which has at most $d-1$ elements, to prove that $T_J$ can be seen as an interval exchange of $d_J$ intervals it is sufficient to show that this set has exactly $d_J-1$ elements. 

Assume that $\alpha\in\A\,\setminus\,\{\pi_0^{-1}(1)\}$ is such that $m_{J,\alpha}\ge M(\alpha)$ and let $\beta\in \A\,\setminus\,\{\pi_0^{-1}(1)\}$ be such that $T^{-M(\alpha)}(\partial I_{\alpha})=\partial I_{\beta}$. Then, by the assumption on $\alpha$, we have that 
\[
T^{-m_{J,\alpha}}(\partial I_{\alpha})=T^{-m{J,\beta}}(\partial I_{\beta}).
\]
This shows that the connection that ends in the point $\partial I_{\alpha}$ decreases the number of discontinuities of $T_J$ by 1.
To conclude the proof of the first statement it remains to repeat the above reasoning for all $\alpha\in\A$ satisfying $m_{J,\alpha}\ge M(\alpha)$.

To prove the second assertion it is sufficient to notice that 
\[\#\{\alpha\in\A\,\setminus\,\{\pi_0^{-1}(1)\} \mid m_{J,\alpha}\ge M(\alpha)\} = \#\{\alpha\in\A\,\setminus\,\{\pi_0^{-1}(1)\} \mid M(\alpha) < +\infty\},\]
if $J$ does not contain any point from any connection.
\end{proof}

In view of the previous lemma, throughout this work, if $T: I \to I$ is an ergodic IET and $J \subseteq I$ is of the form \eqref{eq: paramofJ}, we will consider the induced IET $T_J$ as an IET on $d_J$ intervals, where
\[d_J = 1 + \# \{T^{-m_{J,\alpha}}(\partial I_{\alpha}) \mid \alpha\in \A\,\setminus\,\{\pi_0^{-1}(1)\} = d -\#\{\alpha\in\A\,\setminus\,\{\pi_0^{-1}(1)\} \mid m_{J,\alpha}\ge M(\alpha)\} \leq d.\]
We will also identify $T_J$ with an element $(\pi_J, \lambda_J) \in S_0^{\A_J} \times \Lambda^{\A_J, J}$ of a possibly smaller alphabet $\A_J$ 
and denote by $\{I_\gamma^J\}_{\gamma \in \A_J}$ the intervals exchanged by $T_J$.

Let us point out that, in the same way that an IET $T$ with $d$ intervals might have less than $d - 1$ discontinuities, the induced map $T_J$ might have less than $d_J - 1$ discontinuities, that is, some of the points in $
\{T^{-m_{J,\alpha}}(\partial I_{\alpha})\}_{\alpha\in \A\,\setminus\,\{\pi_0^{-1}(1)\}}$ might not be real discontinuity points of $T_J$.

Notice that given an ergodic IET $T: I \to I$ and a subinterval $J \subseteq I$ of the form \eqref{eq: paramofJ}, by the minimality of $T$ and $T^{-1}$, we can express $I$ as a disjoint union of the form
\begin{equation}
\label{eq:towers_decomposition}
I = \bigsqcup_{\gamma \in \A_J} \bigsqcup_{i = 0}^{h_\gamma - 1} T^i(I_\gamma^J),
\end{equation}
where, for any $\gamma \in \A_J$, $h_\gamma$ denotes the first return time to $J$ by $T$ of any point in $I_\gamma^J$.

 { 
 
 We finish this section by recalling a well-known fact, which we prove for completeness.

\begin{lemma}\label{lem: minimality}
 If $T: I \to I$ is ergodic with respect to the Lebesgue measure then it is minimal.
\end{lemma}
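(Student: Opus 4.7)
The plan is to argue by contradiction: assume that $T$ is ergodic but not minimal, so that there exists $y_0 \in I$ whose forward orbit is not dense, equivalently an open subinterval $J \subseteq I$ with $\mathcal{O}^+(y_0) \cap J = \emptyset$.

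The first step would be to rule out periodic points. If $T^p(x) = x$ for some $x \in I$ and some $p \geq 1$, then since $T$ is a piecewise translation, $T^p$ must equal the identity on a non-trivial half-open interval $J'$ containing $x$ (by right-continuity, this works even if $x$ lies on a boundary of a continuity interval). The set $\bigcup_{i=0}^{p-1} T^i(J')$ is then $T$-invariant with positive Lebesgue measure, so by ergodicity it has full measure, forcing $T^p = \id$ on all of $I$. For $p = 1$ this gives $T = \id$, which is trivially non-ergodic on a non-trivial interval; for $p \geq 2$, any measurable $B \subsetneq J'$ with $0 < \lambda(B) < \lambda(J')$ yields the proper $T$-invariant set $\bigcup_{i=0}^{p-1} T^i(B)$, again contradicting ergodicity. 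Hence $T$ has no periodic points.

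With this in hand, I would invoke the classical structure theorem for IETs: any IET without periodic points admits a decomposition $I = \bigsqcup_{j=1}^{k} M_j$ into finitely many $T$-invariant subsets $M_j$, each a finite union of half-open intervals on which $T|_{M_j}$ acts minimally. This result can be proved by induction on the number of connections of $T$: when $T$ has none, Keane's minimality theorem applies directly; otherwise, Lemma \ref{lem: notallconnections} and Lemma \ref{lem: alphabetafetrconnections} (combined with Corollary \ref{cor: existenceofinfiniteorbit}) allow one to analyze the connections and reduce to sub-IETs with strictly fewer connections. Ergodicity of $T$ then forces each $M_j$ to have Lebesgue measure either $0$ or $\lambda(I)$; since each $M_j$ is a non-trivial union of half-open intervals, $\lambda(M_j) = \lambda(I)$ for every $j$, and disjointness gives $k = 1$ and $M_1 = I$. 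Hence $T$ is minimal on $I$, contradicting the assumption.

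The main obstacle is justifying the structure theorem, which, while classical, is somewhat technical for IETs with connections. A self-contained alternative would be to work directly with the closure $K := \overline{\mathcal{O}^+(y_0)}$ and argue, using the piecewise-translation structure of $T$ and the Rohlin tower decomposition \eqref{eq:towers_decomposition}, that $K$ is itself a finite union of closed intervals. Adjusting the finitely many boundary points then produces an exactly $T$-invariant union of half-open intervals $\tilde K$ with $0 < \lambda(\tilde K) < \lambda(I)$, directly contradicting ergodicity.
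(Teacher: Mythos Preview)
Your high-level strategy---rule out periodic points, then invoke the decomposition of an aperiodic IET into finitely many minimal components, then use ergodicity to force a single component---is valid in principle, but it is a much heavier route than the paper's, and your proposed justification of the structure theorem has a genuine gap.

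The paper's argument is short and direct, with no appeal to Keane's theorem or any structural decomposition. Given $x,y\in I$ and $\epsilon>0$, it takes $J=[x,x+\epsilon)$, forms the Rokhlin tower $\tilde I=\bigcup_{\beta}\bigcup_{k=0}^{h_\beta-1}T^k(I_\beta^J)$ coming from the induced map $T_J$, notes that $\tilde I$ has full measure by ergodicity, picks $\tilde y\in\tilde I$ close to $y$ avoiding finitely many forward images of discontinuities, and uses the piecewise-translation structure to drag $y$ along with $\tilde y$ backwards into $J$. Nothing is needed beyond the fact that the first return to a subinterval is again an IET.

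Your sketch of the structure theorem proposes to induct on the number of connections using Lemma~\ref{lem: alphabetafetrconnections} and Corollary~\ref{cor: existenceofinfiniteorbit}. Two problems. First, both statements, as formulated in the paper, assume the IET is ergodic, and the paper's setup of the induced-map machinery in Section~\ref{sc:induced_IETs} explicitly invokes Lemma~\ref{lem: minimality} to guarantee that the quantities $m_{J,\alpha}$ are finite; citing those lemmas to prove Lemma~\ref{lem: minimality} is circular. Second, in your inductive step you would pass to sub-IETs (the pieces cut out by the connections), and these need not be ergodic---indeed, the whole point is that you do not yet know how many components there are---so the hypotheses of those lemmas fail for the objects you actually need to analyse. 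The structure theorem is correct and classical, but you would have to prove it independently (for instance by showing directly that the boundary of any orbit closure is contained in the finite set of discontinuity orbits up to their first collision), not by borrowing the paper's ergodic-only tools. Your suggested alternative via $K=\overline{\mathcal O^+(y_0)}$ is closer to a self-contained argument, but that too needs the key step (finiteness of $\partial K$) spelled out, and even done carefully it is longer than the paper's dozen-line proof.
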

\begin{proof}
 We will show that for every $x,y\in I$ and every $\epsilon>0$ there exists $m\in\N$ such that $|T^{-m}y-x|<2\epsilon$. Take an interval $J:=[x,x+\epsilon)$ and consider the first return map $T_J$. Let $I^J_{\beta}$ be intervals exchanged by $T_J$ and $h_{\beta}$ the corresponding first return times. Since $T$ is ergodic, the set $\tilde I:=\bigcup_{\beta\in \mathcal B}\bigcup_{k=0}^{h_{\beta}-1} T^k(I^J_{\beta})$ is of full Lebesgue measure.
 
 Define $h:=\max\{h_{\beta}\mid\beta\in\mathcal B\}$. Consider the set $C:=\{T^{j}(\partial I_{\alpha})\mid \alpha\in\A,\ j=0,\ldots,h \}$. Pick $0<\delta<\epsilon$ such that $(y,y+\delta]\cap C=\emptyset$. Since $\tilde I$ is of full measure, there exists $\tilde y\in[y,y+\delta]\cap \tilde I$. By the choice of $\delta$ and by the fact that $T$ is right-continuous, the sets $\{T^{-j}[y,\tilde y]\mid j=0,\ldots,h \}$ are a family of pairwise disjoint intervals and $T^{-1}$ acts on each of them by translation. Since $\tilde y\in\tilde I$, there exists $m\le h$ such that $T^{-m}\tilde y\in J$ and $T^{-m}[y,\tilde y]=[T^{m}y,T^{-m}\tilde y]$. Thus 
 \[
|T^{-m}y,x|<\epsilon+\delta<2\epsilon,
 \]
 which finishes the proof.
 \end{proof}
}

\subsection{Parametrizing IETs with similar induced maps}
For every IET $T: I \to I$ given by $\pi \in S_0^\A$ and $\lambda\in\Lambda^{\A,I}$, we consider the set $\Lambda^{\mathcal A,I}_T$ given by
\begin{equation}
\label{eq:nbhd_connections}
\left\{ \tilde\lambda\in\Lambda^{\A,I} \left| 
\begin{array}{l}
M(T_{\tilde{\lambda}},\beta)=M(T,\beta)\ \text{and}\ N(T_{\tilde{\lambda}},\beta)=N(T,\beta), \quad \text{for}\ \beta\in\A, \\
T_{\tilde{\lambda}}^{-M(\beta)}(\partial I_\beta^{\tilde\lambda})=\partial I_\gamma^{\tilde\lambda}\Leftrightarrow T^{-M(\beta)}(\partial I_\beta)=\partial I_\gamma, \quad \text{for $\beta \in \A$ with }M(\beta)<\infty,\\
T_{\tilde{\lambda}}^{N(\beta)}(\partial I_\beta^{\tilde\lambda})=\partial I_\gamma^{\tilde\lambda}\Leftrightarrow T^{N(\beta)}(\partial I_\beta)=\partial I_\gamma, \quad \text{for $\beta \in \A$ with }N(\beta)<\infty.
\end{array}\right\} \right.
\end{equation}

In the above definition, the conditions on $N(\cdot)$ and $M(\cdot)$ are equivalent, nevertheless we write both of them for completeness. {The set $\Lambda^{\mathcal A,I}_T$ denotes all length vectors $\tilde \lambda$ in $\Lambda^{\A,I}$ for which the IET $(\pi, \tilde \lambda)$ has the same connection pattern as $T$. Obviously $\lambda\in \Lambda^{\mathcal A,I}_T$. The following proposition is one of the crucial tools used later in the proofs of the main results. In loose words, it states that by starting with any IET and considering a Rokhlin tower configuration obtained by inducing on a properly chosen interval, we can obtain a new IET by perturbing the parameters of this configuration, which has the same combinatorial and connection data as the initial map.

\begin{proposition}
\label{prop: unwinding}
Let $T = (\pi, \lambda) \in S_0^\A \times \Lambda^{\A, I}$ be an ergodic IET and let $J \subseteq I$ be a subinterval of the form \eqref{eq: paramofJ} with endpoints $T^{m}(\partial I_{\alpha_J}),$ $T^n(\partial I_{\beta_J}),$ for some $\alpha_J, \beta_J \in \A$ and $m, n \in \Z$. Assume that $J$ does not contain any point from any connection of $T$ and let
\begin{equation}
\label{eq:initial_towers}
I = \bigsqcup_{\gamma \in \A_J} \bigsqcup_{i = 0}^{h_\gamma - 1} T^i(I_\gamma^J),
\end{equation}
 be the associated Rokhlin tower decomposition of $I$. 
 
Then, for any $v \in \R^{\A_J}_+$ satisfying $\sum_{\gamma \in \A_J} v_\gamma h_\gamma = |I|$, there exists $\tilde \lambda \in \Lambda^{\A, I}$ such that the IET $\tilde T = (\pi, \tilde \lambda)$ and the interval $\tilde J$ with endpoints $\tilde T^{m}(\partial \tilde I_{\alpha_J})$, $\tilde T^n(\partial \tilde I_{\beta_J})$, where $\{ \tilde I_\alpha\}_{\alpha \in \A}$ denote the intervals exchanged by $\tilde T$, satisfy the following.
\begin{itemize}
\item The induced IET $\tilde T_{\tilde J}= (\tilde \pi ^{\tilde J}, \tilde \lambda ^{\tilde J})$ is defined on the alphabet $\A_J$,
\item $\tilde \lambda^{\tilde J} = v$ and $\tilde \pi^{\tilde J} = \pi^J$,
\item $\tilde T_{\tilde J}$ has the same associated tower decomposition as $T_J = (\pi^J, \lambda^J)$. 
\end{itemize}
\end{proposition}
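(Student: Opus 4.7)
The plan is to rebuild $\tilde T$ directly from the combinatorial skeleton of the tower decomposition \eqref{eq:initial_towers}, rescaled by the new base widths $v$, and then identify the resulting map with the IET $(\pi, \tilde \lambda)$ for a suitable length vector $\tilde \lambda$. The remaining assertions on $\tilde J$ and on the induced map will then follow because every $\tilde T$-orbit of a discontinuity shadows the corresponding $T$-orbit level by level.

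First, I would extract the skeleton by recording two functions on pairs $(\gamma, i)$ with $\gamma \in \A_J$ and $0 \le i < h_\gamma$: the letter $\alpha(\gamma, i) \in \A$ with $T^i(I_\gamma^J) \subseteq I_{\alpha(\gamma, i)}$, and the left-to-right position $\sigma(\gamma, i)$ of $T^i(I_\gamma^J)$ inside $I$. Since each $I_\alpha$ is an interval, the preimages $\alpha^{-1}(\alpha)$ occupy consecutive positions under $\sigma$. Given $v$ with $\sum_\gamma v_\gamma h_\gamma = |I|$, I would place intervals $\tilde L_{(\gamma, i)} \subseteq I$ of widths $v_\gamma$ in the positions prescribed by $\sigma$, set $\tilde I_\alpha := \bigsqcup_{(\gamma, i) \in \alpha^{-1}(\alpha)} \tilde L_{(\gamma, i)}$ and $\tilde \lambda_\alpha := |\tilde I_\alpha| = \sum_{(\gamma, i) \in \alpha^{-1}(\alpha)} v_\gamma$, denote the resulting linear map $v \mapsto \tilde \lambda$ by $\Phi$, set $\tilde J := \bigsqcup_\gamma \tilde L_{(\gamma, 0)}$, and define the candidate map $\tilde T$ as the translation sending $\tilde L_{(\gamma, i)}$ to $\tilde L_{(\gamma, i+1)}$ for $i < h_\gamma - 1$, and on the top levels $\tilde L_{(\gamma, h_\gamma - 1)}$ via the IET $(\pi^J, v)$ on $\tilde J$.

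The crucial step is to verify that $\tilde T$ coincides with the IET $(\pi, \tilde \lambda)$. The combinatorial arrangement of the skeleton immediately gives that $\tilde I_\alpha$ and $\tilde T(\tilde I_\alpha)$ are ordered by $\pi_0$ and $\pi_1$; the only non-trivial point is that $\tilde T$ acts on each $\tilde I_\alpha$ by a single translation. Both the IET shift $\tau_\alpha(\tilde \lambda) = \sum_{\pi_1(\beta) < \pi_1(\alpha)} \tilde \lambda_\beta - \sum_{\pi_0(\beta) < \pi_0(\alpha)} \tilde \lambda_\beta$ and the level-to-level shift between $\tilde L_{(\gamma, i)}$ and $\tilde L_{(\gamma, i+1)}$ for $(\gamma, i) \in \alpha^{-1}(\alpha)$ are linear functions of $v$ on the affine subspace $\{v : \sum v_\gamma h_\gamma = |I|\}$. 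For $v$ in a small neighborhood of $\lambda^J$ within $\Lambda^{\A, I}_T$, the discontinuities of $(\pi, \Phi(v))$ deform continuously and, because $J$ avoids all connections, no collision occurs with the endpoints of $\tilde J$; the induced tower decomposition therefore persists with the same combinatorial type, forcing both shift expressions to agree on an open subset of the affine subspace. Two linear functions agreeing on an open subset of an affine subspace must agree throughout, so the identity extends to all admissible $v$. A parallel argument handles the top-level transitions via $(\pi^J, v)$ on $\tilde J$.

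Once $\tilde T = (\pi, \tilde \lambda)$ is established, the remaining conclusions ($\tilde T_{\tilde J} = (\pi^J, v)$, the tower heights $h_\gamma$, the inclusion $\tilde \lambda \in \Lambda^{\A, I}_T$, and the description of $\tilde J$ via $\tilde T^m(\partial \tilde I_{\alpha_J})$ and $\tilde T^n(\partial \tilde I_{\beta_J})$) all follow because each discontinuity $\partial \tilde I_\alpha$ sits at a level boundary of the new configuration and its $\tilde T$-orbit traces the same sequence of levels as the $T$-orbit of $\partial I_\alpha$. The main obstacle is the structural-stability step: establishing that for $v$ close to $\lambda^J$ in $\Lambda^{\A, I}_T$, the tower decomposition of $(\pi, \Phi(v))$ with respect to the appropriately defined $\tilde J$ matches that of $T$ with respect to $J$. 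This requires carefully tracking how the positions of the discontinuities $\partial I_\alpha$ and of the endpoints of $J$ vary with $v$, and it is precisely where the no-connection-in-$J$ hypothesis enters, to ensure no collisions occur.
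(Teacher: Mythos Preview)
Your construction of $\tilde T$ from the combinatorial skeleton is exactly what the paper does: replace each floor $T^i(I_\gamma^J)$ by an interval of width $v_\gamma$, keeping the order, and define $\tilde T$ level-to-level within towers and by $(\pi^J,v)$ on the top. The divergence is in the verification that $\tilde T$ acts by a \emph{single} translation on each $\tilde I_\alpha$.

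The paper bypasses your structural-stability detour with a one-line computation. Order the floors of the decomposition as $I_0<I_1<\cdots<I_{H-1}$ and their images as $I_0^+<\cdots<I_{H-1}^+$; let $k_\alpha$ (resp.\ $\ell_\alpha$) be the smallest index $k$ with $I_k\subseteq I_\alpha$ (resp.\ $I_\ell^+\subseteq T(I_\alpha)$). Then for any floor $\tilde I_k\subseteq\tilde I_\alpha$ the translation taking $\tilde I_k$ to $\tilde T(\tilde I_k)$ equals
\[
\sum_{j<\ell_\alpha}|\tilde I_j^+|-\sum_{j<k_\alpha}|\tilde I_j|,
\]
which visibly depends only on $\alpha$, not on $k$. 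That is the whole argument; no perturbation, no open-set agreement, no linearity principle is needed.

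Your route via linearity and structural stability can be made rigorous, but the step you flag as the ``main obstacle'' really is one, and it risks circularity. You need that for $v$ in an open subset of the $(d_J-1)$-dimensional simplex the IET $(\pi,\Phi(v))$ has the same tower combinatorics. If $T$ has connections, a generic $\Phi(v)$ breaks them and the induced map acquires more intervals, so you must restrict to $\Phi(v)\in\Lambda^{\A,I}_T$. But knowing that $\Lambda^{\A,I}_T$ has full dimension $d-1-d'$ and that $\Phi$ surjects onto a neighbourhood of $\lambda$ inside it is essentially the content of the proposition (cf.\ the Remark following it). One can unwind this by first proving independently that the $d'$ connection constraints cut out an affine subspace of dimension $d-1-d'$, then arguing structural stability within it, then matching dimensions; but this is considerably more work than the two-line direct computation above, and you would still owe a careful argument that the finite orbit combinatorics persist under perturbations that preserve the connection pattern.
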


In the following, given two intervals $J_1, J_2 \subseteq \R$, we denote 
\begin{equation}\label{eq: defofintervalorder}
J_1<J_2\ \Leftrightarrow\ x<y \text{ for any }x\in J_1\text{ and any }y\in J_2.
\end{equation}
Notice that given a collection of disjoint intervals, we can order it according to the relation above. 

\begin{proof}[Proof of Proposition \ref{prop: unwinding}]
Fix $v \in \R^{\A_J}_+$ satisfying $\sum_{\gamma \in \A_J} v_\gamma h_\gamma = |I|$. We will define the desired IET $\tilde T$ as follows. First, we will change the lengths of the intervals in the Rokhlin tower decomposition of $I$ associated with $T_J$, while keeping their order in $I$, to express $I$ as a disjoint union of intervals whose lengths are given by $v$. Then we will define a transformation $\tilde T$ on this union so that it defines a Rokhlin tower decomposition for the new transformation. Finally, we will check that $\tilde T$ Is an IET with the desired properties. 

Since $\sum_{\gamma \in \A_J} v_\gamma h_\gamma = |I|$, by changing the intervals of the form $T^i(I_\gamma^J)$ by intervals $\tilde I_{\gamma, i}$ of length $v_\gamma$ for every $\gamma \in \A_J$ and every $0 \leq i < h_\gamma$, we can express $I$ as a disjoint union of the form 
 \begin{equation}
\label{eq:modified_towers}
I = \bigsqcup_{\gamma \in \A_J} \bigsqcup_{i = 0}^{h_\gamma - 1} \tilde I_{\gamma, i}^J,
\end{equation}
 where 
 \[ \tilde I_{i, \gamma}^J < \tilde I_{j, \beta}^J \Leftrightarrow T^i(I_\gamma^J) < T^j(I_\beta^J), \]
 that is, the intervals in the decompositions \eqref{eq:initial_towers} and \eqref{eq:modified_towers} are ordered in the same way. 
 
Let \[\tilde J = \bigsqcup_{\gamma \in \A_J} \tilde I_{\gamma, 0}^J.\]
Notice that since our construction preserves the order of the intervals and $J = \bigsqcup_{\gamma \in \A_J} I_{\gamma}^J$ then $\tilde J$ is also an interval. Clearly $|J| = \sum_{\gamma \in \A_J} |\tilde I_{\gamma, 0}^J| = \sum_{\gamma \in \A_J} v_\gamma = |v|_1$. Moreover, since $J = \bigsqcup_{\gamma \in \A_J} T^{h_\gamma}(I_{\gamma}^J)$ we can also express $\tilde J$ as a disjoint union of intervals $\{ L^J_{\gamma} \}_{\gamma \in \A_J}$ of lengths given by $v$ and such that $\{ L^J_{\gamma} \}_{\gamma \in \A_J}$ and $\{ T^{h_\gamma}(I_{\gamma}^J) \}_{\gamma \in \A_J}$ have the same order inside $I$.

We define a transformation $\tilde T$ on $I$ by setting, for any $\gamma \in \A$, 
\begin{itemize}
 \item $\tilde{T}(\tilde I^J_{\gamma,i})=\tilde I^J_{\gamma,i+1}$ for $0\le i<h_{\gamma}-1$, \item $\tilde T(I^J_{\gamma, h_\gamma - 1}) = L_\gamma^J$,
\end{itemize}
and requiring $\tilde T$ to act via a translation when restricted to these subintervals.

Notice that with these definitions the images, by $T$ and $\tilde T$ respectively, of the intervals in the decompositions \eqref{eq:initial_towers} and \eqref{eq:modified_towers} are ordered in the same way, that is,
 \[ \tilde T(\tilde I_{i, \gamma}^J) < \tilde T(\tilde I_{j, \beta}^J ) \Leftrightarrow T \big( T^i(I_\gamma^J)\big) < T \big( T^j(I_\beta^J) \big), \]

We will show that $\tilde T$ can be seen as an IET with the same combinatorial data as $\pi$ and that its length vector belongs to $\Lambda^{\A, I}_T$. 

Denote $H:= \sum_{\gamma\in\mathcal \A_J}h_{\gamma}.$ Let $\{I_k\}_{k = 0}^{H - 1}$ be the intervals in the tower decomposition \eqref{eq:initial_towers} ordered according to their order in $I$, i.e.,
\[
I_{k_1}=T^{j_1}\big(I_{\gamma_1}^J\big)\text{ and }I_{k_2}=T^{j_2}\big(I_{\gamma_2}^J\big)\text{ with }k_1<k_2\ \Leftrightarrow\ T^{j_1}\big(I_{\gamma_1}^J\big)<T^{j_2}\big(I_{\gamma_2}^J\big),
\]
and let $\{I_k^+\}_{k = 0}^{H - 1}$ be the intervals in the same tower decomposition ordered according to the order of their images, i.e.,
\[
I_{k_1}^+=T^{j_1}\big(I_{\gamma_1}^J\big)\text{ and }I_{k_2}^+=T^{j_2}\big(I_{\gamma_2}^J\big)\text{ with }k_1<k_2\ \Leftrightarrow\ T\left(T^{j_1}\big(I_{\gamma_1}^J\big)\right)<T\left(T^{j_2}\big(I_{\gamma_2}^J\big)\right).
\]

Similarly, let $\{\tilde I_k\}_{k = 0}^{H - 1}$ and $\{\tilde I_k^+\}_{k = 0}^{H - 1}$ denote the intervals in the tower decomposition \eqref{eq:modified_towers} ordered according to their order in $I$ and the order of their images by $\tilde T$ in $I$, respectively.
 
For any $\alpha \in \A$ let $0\le k_\alpha< k_{\alpha}' < H$ and $0\le l_\alpha< l_{\alpha}' < H$ be such that
 \[
 I_{k}\subseteq I_{\alpha}\ \Leftrightarrow\ k_{\alpha}\le k\le k_{\alpha}', \qquad I_{\ell}^+\subseteq T(I_{\alpha})\ \Leftrightarrow\ \ell_{\alpha}\le \ell\le \ell_{\alpha}'.
 \]
Then for every $k_{\alpha}\le k\le k_{\alpha}'$, the IET $T$ acts as a translation on $I_{k}$ by $\sum_{j<\ell_\alpha}|I_j^+|-\sum_{j<k_\alpha}|I_j|$. 

We claim that for any $\alpha \in \A$, $\tilde I_{\alpha}:=\bigsqcup_{j=k_\alpha}^{k_\alpha'}\tilde I_j$ is an interval exchanged by $\tilde T$.

Indeed, since the intervals $\tilde I_k$ and $\tilde I_k^+$ are ordered in the same way as the intervals $I_k$ and $I_k^+$, respectively, the transformation $\tilde{T}$ acts on $\tilde I_k$ via translation by $\sum_{j<\ell_\alpha}|\tilde I_j^+|-\sum_{j<k_\alpha}|\tilde I_j|$. Since the translation value does not depend on $k$, $\tilde{T}$ acts as a translation on the whole interval $\tilde I_{\alpha}$. 

Therefore, we can see $\tilde T$ as an IET on $I$ with $\# \A$ intervals. Moreover, $\tilde T$ has the same combinatorics as $T$ since the intervals (and their images) in both tower decompositions are ordered in the same way. Thus we can identify $\tilde T$ with $(\pi, \tilde \lambda)$ for some $\tilde \lambda \in \Lambda^{\A, I}$, and we denote by $\{ \tilde I_\alpha\}_{\alpha \in \A}$ the intervals exchanged by $\tilde T$. 

Notice that the tower structure associated with $\tilde T_{\tilde J} = (\tilde \pi^{\tilde J}, \tilde \lambda^{\tilde J})$ is given by \eqref{eq:modified_towers}, that is, if we denote by $\{ \tilde I_\gamma^{\tilde J}\}_{\gamma \in \A_J}$ the intervals exchanged by $\tilde T_{\tilde J}$ then $ \tilde I_\gamma^{\tilde J} = \tilde I_{\gamma, 0}$, for every $\gamma \in \A_J$, and we have
\[ I = \bigsqcup_{\gamma \in \A_J} \bigsqcup_{i = 0}^{h_\gamma - 1} \tilde I_{\gamma, i}^J = \bigsqcup_{\gamma \in \A_J} \bigsqcup_{i = 0}^{h_\gamma - 1} \tilde T(\tilde I_{\gamma}^{\tilde J}).\]
Since the intervals $\{\tilde T_{\tilde J}(\tilde I_\gamma^{\tilde J})\}_{\gamma \in \A_J} = \{\tilde T^{h_\gamma}(\tilde I_\gamma^{\tilde J})\}_{\gamma \in \A_J} = \{L_\gamma^J\}_{\gamma \in \A}$ have the same order as the intervals $\{T_{J}(I_\gamma^J)\}_{\gamma \in \A_J} = \{T^{h_\gamma}(I_\gamma^J)\}_{\gamma \in \A_J}$ it follows that $\tilde \pi_{\tilde J} = \pi^J$. Finally, since the lengths of the intervals $\{\tilde I_\gamma^{\tilde J}\}_{\gamma \in \A_J}$ are given by $v$, it follows that $\tilde \lambda^{\tilde J} = v$.

Moreover, notice that the endpoints of the intervals exchanged by $\tilde T$ and those exchanged by $T$ belong to the same tower floors in their respective decompositions, that is,
\[ \partial \tilde I_\alpha \in \tilde{T}^i(\tilde I_\gamma^{\tilde J}) \Leftrightarrow \tilde I_\alpha \in T^i(I^J_\gamma),\]
 for any $\gamma \in \A_J$ and any $0 \leq i < h_\gamma$. From this, and since $J$ verifies \eqref{eq: paramofJ}, it follows that $\tilde J$ is an interval with endpoints $\tilde T^{m}(\partial \tilde I_{\alpha_J})$, $\tilde T^n(\partial \tilde I_{\beta_J})$.
 
Since $J$ does not contain any point from any connection, then every connection is contained in one of the towers of the form $\bigsqcup_{i=0}^{h_{\gamma}-1}T^i(I_{\gamma}^J)$, for some $\gamma \in \A_J$. Then, it follows from the definition of $\tilde T$ (and the previous remarks concerning the endpoints of $\tilde T$) that $\tilde T$ possesses the same connection pattern as $T$, that is, $\tilde \lambda \in \Lambda_T^{\A, I}$. 
\end{proof}

{
\begin{remark}
\label{rmk:simplex}
The previous proposition defines a $(d - d' - 1)$-dimensional simplex $\Delta_J \subseteq \Lambda^{\A, I}$ around $\lambda$ such that for any $\tilde \lambda \in \Delta_J$ the IET $(\pi, \tilde\lambda)$ verifies the conclusions of the proposition, where $d'$ denotes the number of non-trivial connections of $(\pi, \lambda)$.

Indeed, It follows from the proof that the map $v \mapsto \tilde{\lambda}(v)$ given by the previous proposition is linear on the simplex $\tilde \Delta_J:= \big\{ v \in \R^{\A_J}_+ \, \big| \, \sum_{\gamma \in \A_J} v_\gamma h_\gamma = |I| \big\} $ and that $\tilde\lambda(\lambda^J) = \lambda$. Moreover, the map is also injective since we can recover $v$ by inducing the IET associated to $(\pi, \tilde\lambda(v))$ to the interval $\tilde J$. Notice that, by Lemma \ref{lem: alphabetafetrconnections}, the simplex $\tilde \Delta_J$ has dimension $d - d' -1$. Thus $\Delta_J = \tilde\lambda(\tilde \Delta_J)$ satisfies the statement above.

\end{remark}
}

\section{Symmetric interval exchange transformations}

\subsection{Notations and basic properties} 
{Let $I = [a, b)$ be a bounded interval and $T=(\pi, \lambda) \in S_0^\A \times \Lambda^{\A, I}$ be an IET on $I$ with $d := \#\A$ intervals. The permutation $\pi$ (and any IET having $\pi$ as permutation) is said to be} \textit{symmetric} if $\pi_{1}\circ \pi^{-1}_{0}(i)=d+1-i$, {for any $1 \leq i \leq d$.} We say that $T$ is \emph{non-degenerate} if $\partial I_{\alpha}$ is a discontinuity of $T$, for every $\alpha\in \A {\,\setminus\, \{\pi_0^{-1}(1)\}}$. {If $T$ is \emph{degenerate}, i.e., if there exists $\partial I_{\alpha}$ which is not a real discontinuity, then whenever we refer to \emph{intervals of continuity} of $T$, we mean maximal intervals of continuity. Notice that the inverse of a symmetric IET is also a symmetric IET.


We denote the \emph{symmetric reflection} or \emph{involution} {on the open interval $\mathring{I} = (a, b)$ by $\mathcal{I}_I$, where $\mathcal{I}_I: \mathring{I}\to \mathring{I}$ is given by $\mathcal{I}_I(x)= a + b - x$. We omit the endpoints of the interval in this definition so that the domain and codomain of the involution are well-defined subsets of $I$.}
{It is well-known and easy to verify that if $T$ is a symmetric IET then
\begin{equation}\label{eq: standardconjugacy}
\mathcal{I}_I\circ T(x)=T^{-1}\circ\mathcal{I}_I(x), \quad \text{ if }\quad x \neq \partial I_{\alpha}, \text{ for any } \alpha \in \A.
\end{equation}
}
{More generally, the equation above implies 
\begin{equation}
\label{eq:generalized_conjugacy}
\mathcal{I}_I\circ T^n(x)=T^{-n}\circ\mathcal{I}_I(x), \quad \text{ if } \quad x \neq T^{-i}(\partial I_\alpha), \text{ for any } \alpha \in \A \text{ and any } 0 \leq i < n.
\end{equation}}
 {Notice that $\I_I\circ T$ and $T^{-1}\circ \I_I$ are not defined everywhere on $I$ since the $\I_I\circ T$ is not defined at $\partial I_{\pi_0^{-1}(d)}$ while $T^{-1}\circ \I_I$ is not defined at $\partial I_{\pi_0^{-1}(1)}$.} 
Moreover, {a direct calculation shows that}
\begin{equation}
\label{eq:sym_identity_endpoints}
\I_I\circ T(\partial I_{\alpha})=\partial I_{\hat\alpha},\ \text{where}\ \pi_0(\hat\alpha)=\pi_0(\alpha)+1,
\end{equation}
for $\alpha\in\A$ with $\pi_0(\alpha)\neq d$, and 
\begin{equation}\label{eq:sym_identity_endpoints_2}
T^{-1}\circ \I_I(\partial I_{\alpha})=\partial I_{\hat\alpha},\ \text{where}\ \pi_0(\hat\alpha)=\pi_0(\alpha)-1,
\end{equation}
for $\alpha\in\A$ with $\pi_0(\alpha)\neq 1$.

 Let us point out that $\pi$ being symmetric is not a necessary condition for \eqref{eq: standardconjugacy} to hold. Indeed, if $\pi$ is not symmetric but its intervals of continuity are exchanged symmetrically {(e.g., by adding a `fake discontinuity' in one of the exchanged intervals of a symmetric IET with $d$ intervals and considering it as an IET on $d + 1$ intervals)} then \eqref{eq: standardconjugacy} still holds. Moreover, there are examples of IETs that do not exchange their intervals of continuity symmetrically but still satisfy \eqref{eq: standardconjugacy}. These examples arise from two-covers of quadratic differentials {(see, e.g., \cite{boissy_dynamics_2009}}).

The following lemma provides a simple sufficient condition for an IET satisfying \eqref{eq: standardconjugacy} to be symmetric.

\begin{lemma}
\label{lem: condition for symmetric iets,}
 Let $T=(\pi,\lambda): I \to I$ be a non-degenerate IET. If $T$ satisfies
 \eqref{eq: standardconjugacy}
 and $\lambda_{\alpha}\neq \lambda_{\beta}$ for any distinct $\alpha, \beta \in \mathcal{A}$, then $T$ is symmetric.
\end{lemma}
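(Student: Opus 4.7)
My plan is as follows. The conjugacy relation \eqref{eq: standardconjugacy} tells us that $\I_I \circ T$ and $T^{-1} \circ \I_I$ agree on a cofinite subset of $\mathring I$, so they share the same set of real discontinuities and the same maximal decomposition into intervals of continuity. I would first use this, together with the non-degeneracy of $T$, to identify $\{\mathring I_\alpha\}_{\alpha\in\A}$ with $\{\I_I(\mathring{T(I_\beta)})\}_{\beta \in \A}$ as partitions of $\mathring I$. I would then use the distinct-length hypothesis to upgrade this to the natural matching $\I_I(I_\alpha) = T(I_\alpha)$ for every $\alpha$. The symmetric pattern of $\pi$ would then follow immediately from the order-reversing nature of $\I_I$.

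In more detail: on each $\mathring I_\alpha$, the left-hand side of \eqref{eq: standardconjugacy} is the affine reflection $x \mapsto a + b - x - \tau_\alpha$, where $\tau_\alpha$ denotes the translation amount of $T|_{I_\alpha}$. Non-degeneracy guarantees $\tau_\alpha \neq \tau_{\alpha'}$ for pieces adjacent in the $\pi_0$-order, so $\{\mathring I_\alpha\}_{\alpha \in \A}$ is exactly the family of $d$ maximal continuity intervals of the LHS, with real discontinuities at all $d - 1$ interior points $\{\partial I_\alpha : \alpha \neq \pi_0^{-1}(1)\}$. Analogously, on each $\I_I(\mathring{T(I_\beta)})$ the right-hand side is an affine reflection with translation amount $\tau_\beta$, and its real discontinuities form a subset of the $d - 1$ interior boundaries of this decomposition. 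Matching real discontinuities via \eqref{eq: standardconjugacy} and counting forces all $d - 1$ of those boundaries to be real, so the two decompositions of $\mathring I$ coincide as unordered partitions. This produces a bijection $\sigma : \A \to \A$ with $\mathring I_\alpha = \I_I(\mathring{T(I_{\sigma(\alpha)})})$. Comparing lengths gives $\lambda_\alpha = \lambda_{\sigma(\alpha)}$, so $\sigma = \mathrm{id}$ by the distinct-lengths hypothesis, yielding $\I_I(I_\alpha) = T(I_\alpha)$ for every $\alpha$. Finally, since $\I_I$ is order-reversing, $I_\alpha$ (in position $\pi_0(\alpha)$ from the left among the $\{I_\beta\}_{\beta \in \A}$) is mapped to the subinterval in position $d + 1 - \pi_0(\alpha)$ among the $\{\I_I(I_\beta)\}_{\beta \in \A}$; identifying this with $T(I_\alpha)$, which occupies position $\pi_1(\alpha)$ among the $\{T(I_\beta)\}_{\beta \in \A}$, gives $\pi_1(\alpha) = d + 1 - \pi_0(\alpha)$, i.e., the symmetric condition on $\pi$.

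The main obstacle is the first step, where one must ensure that the two partitions of $\mathring I$ coincide without assuming a priori that $T^{-1}$ is non-degenerate. The cardinality argument (the $d - 1$ real discontinuities of the LHS must be realized by the at most $d - 1$ interior boundary points of the RHS decomposition) closes this gap and, as a convenient byproduct, yields the non-degeneracy of $T^{-1}$ as well. The distinct-lengths assumption is not used in this step; it enters only in the second step to rule out non-trivial bijections $\sigma$, without which one could only match $I_\alpha$ to $T(I_{\sigma(\alpha)})$ via $\I_I$ — information insufficient to pin down the symmetric pattern of $\pi$.
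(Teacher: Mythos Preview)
Your argument is correct and reaches the same key identity as the paper, namely $\I_I(I_\alpha)=T(I_\alpha)$ for every $\alpha$, after which the order-reversing property of $\I_I$ finishes the job exactly as you do. The route you take to that identity, however, is genuinely different from the paper's.

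The paper argues greedily by length: it orders the intervals $\lambda_{\alpha_1}>\dots>\lambda_{\alpha_d}$ and shows, starting from the longest, that $\I_I(\mathring I_{\alpha_i})$ must land inside a single $T(I_\beta)$ (using continuity of $T^{-1}\circ\I_I$ on $\mathring I_{\alpha_i}$ together with non-degeneracy), and that length forces $\beta=\alpha_i$. An easy induction then gives $\I_I(I_\alpha)=T(I_\alpha)$ for all $\alpha$. The distinct-length hypothesis is used at every step of the induction. Your approach instead matches the two partitions $\{\mathring I_\alpha\}$ and $\{\I_I(\mathring{T(I_\beta)})\}$ globally by a discontinuity count, producing a bijection $\sigma$ first and only then invoking distinct lengths to force $\sigma=\id$. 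Your method separates the two roles of the hypotheses more cleanly (non-degeneracy alone gives the partition matching, distinct lengths alone pins down $\sigma$) and yields non-degeneracy of $T^{-1}$ as a byproduct; the paper's induction is shorter and avoids having to argue that two maps agreeing off a finite set share the same jump set.
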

\begin{proof}
 Arrange the intervals $\{I_{\alpha}\}_{ \alpha\in \A}$ according to their lengths, such that $\lambda_{\alpha_1}>\lambda_{\alpha_2}>\dots> \lambda_{\alpha_d}$.
 Note that $\I\circ T$ is continuous on $\mathring I_{\alpha_1}$, so $T^{-1}\circ \I$ is also continuous on $\mathring I_{\alpha_1}$. Since the discontinuity points of $T^{-1}$ are given by $\{T(\partial I_{\beta}) \mid \beta\in \A {\setminus \{\pi_1^{-1}(1)\}} \}$, so by the maximal length of $I_{\alpha_1}$, non-degenericity of $T$ and the continuity of $T^{-1}\circ \I$, we must have 
 
 \begin{equation}\label{symmetric identity}
 \I(I_{\alpha_1})=T(I_{\alpha_1}). 
 \end{equation}

 By induction on the index $\{\alpha_i, 1\le i\le d\}$, the above identity holds for every $\alpha \in \A$. Because the involution $\I$ reverses the order of $\{I_{\alpha}, \alpha \in \A\}$, that is:
 \[
 I_{\alpha}<I_{\beta}\Longrightarrow \I(I_{\alpha})>\I(I_{\beta}),
 \]
 {where the order of intervals is given by \eqref{eq: defofintervalorder}, }the identity \eqref{symmetric identity} implies that $T$ also reverses the order of $\{I_{\alpha}, \alpha \in \A\}$, hence $T$ is symmetric.
\end{proof}
{We have an immediate consequence for IETs with general combinatorial data.
\begin{corollary}\label{cor: symmetricintcont}
 Let $T: I \to I$ be an IET satisfying \eqref{eq: standardconjugacy} such that all its continuity intervals are of different lengths. Then $T$ exchanges its continuity intervals symmetrically.
\end{corollary}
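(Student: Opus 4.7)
The plan is to reduce the statement directly to Lemma \ref{lem: condition for symmetric iets,} by reinterpreting $T$ as an IET on the smaller alphabet $\A'$ consisting of its maximal intervals of continuity. Under this reinterpretation, $T$ is non-degenerate by construction and, by hypothesis, has pairwise distinct interval lengths. The only remaining ingredient to apply the lemma is to verify that $T$, viewed on $\A'$, still satisfies the conjugacy relation $\mathcal{I}_I \circ T(x) = T^{-1}\circ \mathcal{I}_I(x)$ for all $x \notin \{\partial I_\alpha\}_{\alpha \in \A'}$.

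Since $\{\partial I_\alpha\}_{\alpha \in \A'} \subseteq \{\partial I_\alpha\}_{\alpha \in \A}$, the hypothesis already delivers the identity away from $\{\partial I_\alpha\}_{\alpha \in \A}$, so what is left is to check it at the finitely many \emph{fake} discontinuities $p = \partial I_\alpha$ with $\alpha \in \A\setminus\A'$. Fix such a $p$ and choose a sequence $x_n \to p^-$ with $x_n \notin \{\partial I_\beta\}_{\beta \in \A}$. Since $p$ is a fake discontinuity, $T$ is continuous at $p$, so together with the right-continuity convention this yields $T(x_n) \to T(p^-) = T(p)$, and hence $\mathcal{I}_I\circ T(x_n) \to \mathcal{I}_I\circ T(p)$ by continuity of $\mathcal{I}_I$. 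On the other hand, because $\mathcal{I}_I$ reverses orientation, $\mathcal{I}_I(x_n) \to \mathcal{I}_I(p)$ from the \emph{right}, and the right-continuity of $T^{-1}$ (which follows from the right-continuity convention for $T$, as each half-open continuity interval is mapped to a half-open interval) gives $T^{-1}\circ \mathcal{I}_I(x_n) \to T^{-1}\circ \mathcal{I}_I(p)$. Passing to the limit in the identity $\mathcal{I}_I\circ T(x_n) = T^{-1}\circ \mathcal{I}_I(x_n)$, which is valid by hypothesis along the chosen sequence, yields the desired equality at $p$.

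With \eqref{eq: standardconjugacy} now verified for $T$ on the alphabet $\A'$, Lemma \ref{lem: condition for symmetric iets,} applies and shows that $T$ is symmetric as an IET on $\A'$, which is precisely the statement that $T$ exchanges its maximal intervals of continuity symmetrically. The only delicate point in this plan is choosing the correct side of approach to the fake discontinuity: one must approach from the left, so that the image under $\mathcal{I}_I$ approaches from the right and both sides of the identity can be controlled via the right-continuity convention. Every other step reduces to the already-proved lemma and routine bookkeeping.
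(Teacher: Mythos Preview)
Your proposal is correct and follows essentially the same approach as the paper: both reduce to Lemma~\ref{lem: condition for symmetric iets,} by regarding $T$ as a non-degenerate IET on the smaller alphabet of its maximal continuity intervals. You supply an extra detail the paper leaves implicit, namely the verification of \eqref{eq: standardconjugacy} at the fake discontinuities via a one-sided limit argument exploiting right-continuity; this is a reasonable point to spell out, and your choice to approach from the left so that $\mathcal{I}_I$ sends the sequence to the right is the correct one.
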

\begin{proof}
 The result follows from Lemma \ref{lem: condition for symmetric iets,} by replacing the intervals exchanged by $T$ with the continuity intervals of $T$ (thus possibly reducing the number of exchanged intervals).
\end{proof}
}

{Given an IET $T: I \to I$ with exchanged intervals $\{I_{\alpha}\}_{\alpha \in \A}$ we denote by $c_{\alpha}$ the middle point of the interval $I_\alpha$, for any $\alpha\in\A$. Note that if $T$ is symmetric, then 
\begin{equation}\label{eq: centersandinvolution}
 \mathcal{I}_I\circ T(c_{\alpha})=c_{\alpha},
\end{equation}
 for every $\alpha\in\A$. These points, as well as the middle point of the interval $I$, which we denote by
 \[c_{1/2} := \tfrac{a + b}{2},\] will play an important role in our proofs. Notice that $c_{1/2}$ is the only fixed point of $\I_I$ while 
 {the points $\{c_{\alpha}\}_{\alpha\in\A}$ are the only ones satisfying 
 \eqref{eq: centersandinvolution}.} {Moreover, the backward and forward 
 iterates of these points are closely related.
 
 \begin{lemma}
 \label{lem:inverse_iterates}
 Let $T: I \to I$ be a symmetric IET and $\alpha \in \A \cup \{\tfrac{1}{2}\}$. If $m \geq 1$ is such that $\{c_\alpha, T(c_\alpha), \ldots, T^{m - 1}(c_\alpha)\} \cap \{\partial I_\beta\}_{\beta \in \A} = \emptyset$, then
 \begin{equation}
 \label{eq:inverse_iterates}
 T^{-m}(c_\alpha) = 
 {T^{-1} \circ \I_I} \big( T^{m - \delta_{1/2}(\alpha)}(c_\alpha) \big),
 \end{equation}
 where 
 \begin{equation}
\label{eq:delta_1/2}
\delta_{1/2}(\beta) = \left\{ \begin{array}{ll} 1 & \text{ if } \quad \beta = \tfrac{1}{2}, \\
0 & \text{ otherwise}.\\
 \end{array} \right.
 \end{equation}
 \end{lemma}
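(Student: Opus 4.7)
The plan is to derive the identity directly from the generalized conjugacy relation \eqref{eq:generalized_conjugacy} applied at the basepoint $c_\alpha$, splitting into the two cases $\alpha = \tfrac{1}{2}$ and $\alpha \in \A$ according to how the involution $\mathcal I_I$ acts on the starting point. No ingredient beyond the symmetry identities \eqref{eq: standardconjugacy}, \eqref{eq:generalized_conjugacy} and \eqref{eq: centersandinvolution} is needed, and the orbit-avoidance hypothesis will be used solely to ensure that each invocation of a conjugacy relation is applied at a point where that relation is valid.

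First I would note that the assumption $\{c_\alpha, T(c_\alpha), \ldots, T^{m-1}(c_\alpha)\} \cap \{\partial I_\beta\}_{\beta \in \A} = \emptyset$ is precisely the hypothesis required to apply \eqref{eq:generalized_conjugacy} with $n = m$ at $x = c_\alpha$, which yields the central equality
\[
\mathcal I_I \circ T^m(c_\alpha) \;=\; T^{-m} \circ \mathcal I_I(c_\alpha).
\]

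If $\alpha = \tfrac{1}{2}$, then $c_{1/2}$ is the unique fixed point of $\mathcal I_I$, so the displayed equality reduces to $T^{-m}(c_{1/2}) = \mathcal I_I \circ T^m(c_{1/2})$. To recast this in the desired form I would then apply \eqref{eq: standardconjugacy} at $x = T^{m-1}(c_{1/2})$, which is legitimate since by hypothesis this point is not a discontinuity of $T$, obtaining $\mathcal I_I \circ T^m(c_{1/2}) = T^{-1} \circ \mathcal I_I(T^{m-1}(c_{1/2}))$. Combining the two equalities gives \eqref{eq:inverse_iterates} in the case $\delta_{1/2}(\tfrac{1}{2}) = 1$.

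If instead $\alpha \in \A$, the identity \eqref{eq: centersandinvolution} can be rewritten, upon composing with $\mathcal I_I$, as $\mathcal I_I(c_\alpha) = T(c_\alpha)$. Substituting this into the central equality turns its right-hand side into $T^{-m}(T(c_\alpha)) = T^{-(m-1)}(c_\alpha)$, so that $T^{-(m-1)}(c_\alpha) = \mathcal I_I \circ T^m(c_\alpha)$; applying $T^{-1}$ to both sides yields $T^{-m}(c_\alpha) = T^{-1} \circ \mathcal I_I(T^m(c_\alpha))$, which is \eqref{eq:inverse_iterates} in the case $\delta_{1/2}(\alpha) = 0$. The whole argument is essentially a bookkeeping exercise with the relations $\mathcal I_I \circ T = T^{-1} \circ \mathcal I_I$ and $\mathcal I_I \circ T(c_\alpha) = c_\alpha$; there is no real obstacle, and the only point demanding care is checking that every composition appearing along the way is defined at the point at which it is evaluated, a fact ensured throughout by the orbit-avoidance hypothesis.
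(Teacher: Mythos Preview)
Your proof is correct and follows essentially the same approach as the paper's: both apply the generalized conjugacy relation \eqref{eq:generalized_conjugacy} at $c_\alpha$ and then use the identities $\mathcal I_I(c_{1/2}) = c_{1/2}$ and $\mathcal I_I(c_\alpha) = T(c_\alpha)$ for $\alpha \in \A$ to finish. The paper's proof is simply a one-line sketch of the argument you spelled out in full.
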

}

\begin{proof}
This result follows by directly applying \eqref{eq:generalized_conjugacy} and noticing that $\I_I(c_{1/2}) = c_{1/2}$ and that $\I_I(c_\alpha) = T(c_\alpha)$, for any $\alpha \in \A$.
\end{proof}

{The following result concerning Birkhoff sums over symmetric IETs {follows 
directly} from \cite[Lemma 3.11]{berk_ergodicity_2023}. Although not 
immediately useful for us, this fact will be crucial in one of the final 
arguments of the proof of the main result of this paper.
 \begin{lemma}\label{lem: berktrujillo}
 Let $T: I \to I$ be a symmetric IET, $\alpha \in \A \cup\{\tfrac{1}{2}\}$ and $f : I\to \R$ satisfying $f\circ \I_I=-f$ on $\mathring I$. If $c_{\alpha}$ is not part of any connection, then $$S_{2n + \delta_{1/2}(\alpha)}f(T^{-n}(c_{\alpha})) = 0,$$ for any $n\in\N$, where $\delta_{1/2}$ is given by \eqref{eq:delta_1/2}.
 \end{lemma}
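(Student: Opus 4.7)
The plan is to exploit two facts: the pointwise symmetry $f \circ \I_I = -f$ and the $\I_I$-equivariance $\I_I \circ T = T^{-1} \circ \I_I$ away from the discontinuities of $T$ (cf.\ \eqref{eq: standardconjugacy}, \eqref{eq:generalized_conjugacy}). Together they will yield reflection identities on the orbit of $c_\alpha$ under $T$, allowing us to pair up the summands of the Birkhoff sum so that each pair cancels.

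First, I would invoke the hypothesis that $c_\alpha$ lies on no connection of $T$. This guarantees that no iterate $T^j(c_\alpha)$, for $j \in \Z$, coincides with a discontinuity of $T$, so \eqref{eq:generalized_conjugacy} and Lemma \ref{lem:inverse_iterates} are valid on the entire orbit of $c_\alpha$. Specializing Lemma \ref{lem:inverse_iterates} to the two cases of interest yields the reflection identities
\[
\I_I(T^k(c_{1/2})) = T^{-k}(c_{1/2}) \qquad \text{and} \qquad \I_I(T^k(c_\alpha)) = T^{1-k}(c_\alpha), \quad \alpha \in \A,
\]
the second relying on $\I_I(c_\alpha) = T(c_\alpha)$ from \eqref{eq: centersandinvolution}. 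Combining with $f \circ \I_I = -f$ then produces
\[
f(T^{-k}(c_{1/2})) = -f(T^k(c_{1/2})), \qquad f(T^{1-k}(c_\alpha)) = -f(T^k(c_\alpha)).
\]

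Next, I would pair off the terms of the relevant Birkhoff sum according to these identities. For $\alpha = \tfrac{1}{2}$ the sum $S_{2n+1}f(T^{-n}(c_{1/2}))$ rewrites as $\sum_{j=-n}^{n} f(T^j(c_{1/2}))$; the central summand $f(c_{1/2})$ vanishes by setting $k=0$ in the first identity (a fixed point of $\I_I$ must map to zero under any function that anti-commutes with $\I_I$), and the remaining $2n$ terms cancel in pairs $\{T^j(c_{1/2}), T^{-j}(c_{1/2})\}$, $1 \leq j \leq n$. For $\alpha \in \A$ one groups the summands of $S_{2n}f(T^{-n}(c_\alpha))$ via the involution $j \mapsto 1-j$ and applies the second reflection identity to kill each pair.

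The main obstacle I anticipate is the careful index bookkeeping in the $\alpha \in \A$ case, where the involution $j \mapsto 1-j$ has center $\tfrac{1}{2}$, so one must match the translation structure of the Birkhoff sum to an involution that is not symmetric about $0$; this is where the precise formulation in Lemma \ref{lem:inverse_iterates} (and in \cite[Lemma 3.11]{berk_ergodicity_2023}, to which the present lemma reduces by a direct translation of notation) becomes essential, and ensures that every summand has a cancelling partner within the sum.
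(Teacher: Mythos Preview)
The paper does not give an independent proof of this lemma; it simply cites \cite[Lemma~3.11]{berk_ergodicity_2023}. Your strategy via the reflection identities $\I_I(T^k(c_{1/2}))=T^{-k}(c_{1/2})$ and $\I_I(T^k(c_\alpha))=T^{1-k}(c_\alpha)$ is the natural one, and the case $\alpha=\tfrac12$ goes through exactly as you describe.

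There is, however, a genuine gap in the case $\alpha\in\A$. You propose to pair the summands of
\[
S_{2n}f(T^{-n}(c_\alpha))=\sum_{j=-n}^{\,n-1}f(T^j(c_\alpha))
\]
via the involution $j\mapsto 1-j$, but this map does \emph{not} stabilise the index set $\{-n,\dots,n-1\}$: already $j=-n$ is sent to $n+1$, which lies outside the range. So the cancellation you announce does not occur. Concretely, for the irrational rotation $T(x)=x+\theta$ on $[0,1)$ with $f(x)=x-\tfrac12$ one finds
\[
S_2f(T^{-1}(c_1))=f\Big(\tfrac{1-3\theta}{2}\Big)+f\Big(\tfrac{1-\theta}{2}\Big)=-2\theta\neq 0,
\]
so the identity cannot hold as written.

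What your argument \emph{does} prove is $S_{2n}f\big(T^{-(n-1)}(c_\alpha)\big)=\sum_{j=1-n}^{n}f(T^j(c_\alpha))=0$, since $\{1-n,\dots,n\}$ is genuinely invariant under $j\mapsto 1-j$. This is almost certainly the intended statement, and the discrepancy is an off-by-one slip in the paper's transcription of the cited result rather than a defect in your method. You should flag this mismatch explicitly instead of asserting that the pairing ``ensures that every summand has a cancelling partner within the sum,'' because with the index set as stated it does not.
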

 }

\subsection{Induced symmetric IETs} 

 Given an IET $T: I \to I$ and a subinterval $J \subseteq I$ with associated induced map $T_J = (\pi^J, \lambda^J) \in S_0^{\A_J} \times \Lambda^{\A_J, J}$ (see Section \ref{sc:induced_IETs} and Lemma \ref{lem: alphabetafetrconnections}), we denote by $\{I^J_\gamma\}_{\gamma \in \A_J}$ the intervals exchanged intervals by $T_J$ and by $\{c^J_\gamma\}_{\gamma \in \A_J}$ the middle points of these intervals. We denote by $p_J : I \to J$ the first return map to $J$ by $T^{-1}$, that is, $p_J$ is given by $x \mapsto T^{-b_J(x)}(x)$ where, 
 \begin{equation}
 \label{eq:backward_return}
 b_J(x) := \min\{m \geq 1 \mid T^{-m}(x) \in J\}.
 \end{equation}
We say that a subinterval $J \subseteq I$ is \emph{symmetric} if there exists $\alpha \in \A$ and $\Delta > 0$ such that $J = [c_\alpha - \Delta, c_\alpha + \Delta) \subseteq I_\alpha$ {or $J=\big[c_{1/2}-\Delta, c_{1/2} +\Delta\big)\subseteq I$. To differentiate between the two cases we will refer to the former as \emph{$\alpha$-symmetric} and to the latter as \emph{$\tfrac{1}{2}$-symmetric}.}


As we shall see, inducing a symmetric IET on symmetric intervals defines IETs satisfying \eqref{eq: standardconjugacy}, which, as seen in the previous section, is closely related with the symmetricity of IETs (see Lemma \ref{lem:symmetric_condition}). Moreover, it is possible to construct $\alpha$-symmetric subintervals of the form \eqref{eq: paramofJ}, for any $\alpha \in \A \cup \{\tfrac{1}{2}\}$ (see Lemma \ref{lem: sym_endpoints}). Under additional conditions on the symmetric subinterval, we can guarantee that associated induced maps are actually symmetric. 

\begin{proposition}
\label{prop:induced_symmetric}
Let $T: I \to I$ be an ergodic symmetric IET and fix $\alpha \in \A \,\setminus\, \{\pi_0^{-1}(1)\}$ such that $M(\alpha) = +\infty$ (see Corollary \ref{cor: existenceofinfiniteorbit}). Fix $m \geq 1$ and let $J \subseteq I$ be the left-closed right-open subinterval with endpoints $T^{-m}(\partial I_\alpha)$, $T^{m}(\partial I_{\hat \alpha})$ (resp. $T^{-m + 1}(\partial I_\alpha),$ $T^{m}(\partial I_{\hat \alpha})$), where $\pi_0(\hat \alpha) = \pi_0(\alpha) - 1$. 
Then $J$ is $\beta$-symmetric for some $\beta \in \A$ (resp. $\tfrac{1}{2}$-symmetric) and $T_J$ satisfies \eqref{eq: standardconjugacy}. 

If, in addition, $J$ does not contain points from any connection. Then $T_J$ is an ergodic symmetric IET on $d - d'$ intervals, where $d'$ is the number of non-trivial connections of $T$.


\end{proposition}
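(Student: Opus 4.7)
The plan is to establish, in turn, (a) the symmetric structure of $J$, (b) the conjugacy identity \eqref{eq: standardconjugacy} for $T_J$, (c) the symmetry of $T_J$ as an IET, and (d) its ergodicity. For (a), using \eqref{eq:sym_identity_endpoints_2} together with iterates of \eqref{eq:generalized_conjugacy}---valid because $M(\alpha) = +\infty$ ensures the relevant backward orbits avoid discontinuities---I would derive
\[\I_I(T^{-m}(\partial I_\alpha)) = T^{m+1}(\partial I_{\hat\alpha}), \qquad \I_I(T^m(\partial I_{\hat\alpha})) = T^{-m+1}(\partial I_\alpha).\]
In Case (ii), applying the first identity with $m$ replaced by $m-1$ shows that $\I_I$ swaps the two endpoints of $J$, so $\I_I(J) = J$ as sets and the midpoint of $J$ is the unique fixed point of $\I_I$, namely $c_{1/2}$; thus $J$ is $\tfrac{1}{2}$-symmetric. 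In Case (i), the identities pair each endpoint of $J$ with the $T$-image of the other, i.e.\ $\I_I(a) = T(b)$ and $\I_I(b) = T(a)$ for the endpoints $a, b$ of $J$. One then argues that $J$ must lie in a single continuity interval $I_\beta$---otherwise the endpoint relations together with the order-reversing character of $\I_I$ are incompatible with $J$ being a nonempty interval---after which \eqref{eq: centersandinvolution} forces the midpoint of $J$ to equal $c_\beta$.

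For (b), I would transfer the global conjugacy $\I_I \circ T^n = T^{-n} \circ \I_I$ to the first return map $T_J$. Letting $\I_J$ denote the involution of $J$ around its midpoint, one has $\I_J = \I_I\mid_J$ in Case (ii) (since $\I_I(J) = J$) and $\I_J = T^{-1} \circ \I_I\mid_J$ in Case (i) (using $c_\beta + T(c_\beta) = 2 c_{1/2}$ from \eqref{eq: centersandinvolution} combined with $\I_I(J) = T(J)$). In both cases a counting argument on return times shows that $\I_J(x)$ has first return time to $J$ under $T^{-1}$ equal to $r_J(x)$, yielding $T_J^{-1} \circ \I_J = \I_J \circ T_J$, that is, \eqref{eq: standardconjugacy} for $T_J$.

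For (c), assuming $J$ meets no connection of $T$, Lemma \ref{lem: alphabetafetrconnections} gives that $T_J$ is a non-degenerate IET on $d-d'$ intervals. If the exchanged intervals have pairwise distinct lengths, Lemma \ref{lem: condition for symmetric iets,} directly yields symmetry of $T_J$. Otherwise I would invoke Proposition \ref{prop: unwinding} and Remark \ref{rmk:simplex}: the simplex $\Delta_J \subseteq \Lambda^{\A,I}_T$ has positive dimension $d-d'-1$, so a generic perturbation produces a symmetric IET $\tilde T = (\pi, \tilde\lambda)$ with the same connection data, whose corresponding $\tilde J$ is still symmetric by (a) and for which $\tilde T_{\tilde J} = (\pi^J, v)$ has distinct exchanged-interval lengths. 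By (b) and Lemma \ref{lem: condition for symmetric iets,} applied to $\tilde T_{\tilde J}$, the permutation $\pi^J$ must be symmetric, and hence $T_J = (\pi^J, \lambda^J)$ is symmetric. Finally, ergodicity of $T_J$ on $J$ follows from the standard fact that the first return map of an ergodic transformation to a set of positive measure is ergodic. I expect the main obstacle to be part (a), Case (i)---specifically, the step forcing $J$ to lie inside a single continuity interval---which requires a careful comparison of the endpoint identities against the behaviour of $T$ near its discontinuities.
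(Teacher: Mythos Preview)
Your outline for (a), (b), and (d) matches the paper's approach closely, and your use of Proposition~\ref{prop: unwinding} to perturb the lengths in (c) is exactly the right move. However, there is a genuine gap in (c): you assert that Lemma~\ref{lem: alphabetafetrconnections} gives $T_J$ as a \emph{non-degenerate} IET on $d-d'$ intervals, but that lemma says no such thing---it only says $T_J$ can be \emph{viewed} as an IET on $d_J = d - d'$ intervals, and the paper explicitly warns (just after that lemma) that some of the points $T^{-m_{J,\alpha}}(\partial I_\alpha)$ may fail to be genuine discontinuities of $T_J$. Consequently, even after perturbing to rationally independent (or merely pairwise distinct) lengths, you cannot directly apply Lemma~\ref{lem: condition for symmetric iets,} to the $(d-d')$-IET $\tilde T_{\tilde J}$: that lemma requires non-degeneracy as a hypothesis. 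What you get instead (via Corollary~\ref{cor: symmetricintcont}) is only that $\tilde T_{\tilde J}$ exchanges its \emph{maximal continuity intervals} symmetrically, and a priori there may be fewer than $d-d'$ of these; if so, the permutation $\pi^J$ on $d-d'$ letters need not be symmetric.

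The paper closes this gap with a counting argument that constitutes the real content of the proof. It shows (Claim~\ref{cl:backward_centers}) that for each $\sigma \in \A \cup \{\tfrac{1}{2}\}$ with $\sigma \neq \beta$ whose center $c_\sigma$ lies in no non-trivial connection, the backward first return $p_J(c_\sigma)$ is a fixed point of $\I_J \circ T_J$ and hence a center of a maximal continuity interval of $T_J$; moreover this assignment is injective. Since by Corollary~\ref{cor:centers_connections} there are at least $d - d'$ such $\sigma$, one obtains $\#\mathcal{C} \geq d - d'$, forcing $\#\mathcal{C} = d - d'$ and hence non-degeneracy. This step---tracking where the centers $c_\sigma$ land under $p_J$---is the missing ingredient in your proposal, and without it the conclusion that $\pi^J$ is symmetric does not follow.
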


In the setting of the previous proposition, the exchanged intervals' middle points of the induced map and of the original IET will be closely related.

\begin{proposition}
\label{prop:induced_centers}
Let $T$, $\alpha, \hat \alpha, \beta$ and $J$ as in Proposition \ref{prop:induced_symmetric}. Assume that $J$ does not contain points from any connection. Then the following holds:
\begin{enumerate}
\item For any $\gamma \in \A_J$ there exists unique $\sigma \in \A \cup \{\tfrac{1}{2}\}$, with $ \sigma \neq \alpha$, and $\ell \geq 1$ such that $c_\gamma^J = p_J(c_\sigma) = T^{-\ell}(c_\sigma)$ and $T_J(p_J(c_\sigma)) = T^{\ell - \delta_{1/2}(\sigma)}(c_\sigma)$, where $\delta_{1/2}$ is given by \eqref{eq:delta_1/2}. Moreover, $c_\sigma$ does not belong to any non-trivial connection. 
\item For any $\delta \in \A \cup \{\tfrac{1}{2}\}$ with $\delta \neq \alpha$, either $p_J(c_\delta) = c_\gamma^J$ or $p_J(c_\delta) = \partial I_\gamma^J$ for some $\gamma \in \A_J$. In the latter case, $c_\delta$ lies inside a connection of $T$.
\end{enumerate}
\end{proposition}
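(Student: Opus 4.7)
For Part (1), the plan is to identify a center of $T$ as a distinguished ``midpoint'' of the $T$-orbit of $c_\gamma^J$ through the tower over $I^J_\gamma$, namely the finite segment $c_\gamma^J, T(c_\gamma^J), \ldots, T^{h_\gamma}(c_\gamma^J) = T_J(c_\gamma^J)$. By Proposition \ref{prop:induced_symmetric}, $T_J$ is a symmetric IET, so \eqref{eq: centersandinvolution} applied to $T_J$ yields $T_J(c_\gamma^J) = \mathcal{I}_J(c_\gamma^J)$. Since the tower interior $\bigsqcup_{i=0}^{h_\gamma-1} T^i(I_\gamma^J)$ contains no discontinuity of $T$, the generalized conjugacy \eqref{eq:generalized_conjugacy} applies and gives $\mathcal{I}_I(T^k(c_\gamma^J)) = T^{-k}(\mathcal{I}_I(c_\gamma^J))$ for each $0 \leq k \leq h_\gamma$. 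Because $\mathcal{I}_J$ coincides with $\mathcal{I}_I$ when $J$ is $\tfrac{1}{2}$-symmetric and differs from it by a constant translation (computable via \eqref{eq: centersandinvolution}) when $J$ is $\beta$-symmetric, combining the two symmetries produces a palindromic identity of the form $\mathcal{I}_I(T^k(c_\gamma^J)) = T^{h_\gamma - k + \epsilon}(c_\gamma^J)$ for some case-dependent integer shift $\epsilon$. Invoking the characterizations $\mathcal{I}_I(c_{1/2}) = c_{1/2}$ and $\mathcal{I}_I(c_\sigma) = T(c_\sigma)$ for $\sigma \in \A$, this identity singles out a unique position $\ell$ with $1 \leq \ell \leq h_\gamma$ such that $T^\ell(c_\gamma^J)$ is a $T$-center, the parity of $h_\gamma$ dictating whether this center equals $c_{1/2}$ or some $c_\sigma$ with $\sigma \in \A$.

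Setting $c_\sigma = T^\ell(c_\gamma^J)$, the identity $c_\gamma^J = p_J(c_\sigma) = T^{-\ell}(c_\sigma)$ follows from the tower structure, since each intermediate backward iterate $T^{-j}(c_\sigma) = T^{\ell - j}(c_\gamma^J)$, for $0 < j < \ell$, lies in a tower floor above $I_\gamma^J$ and hence outside $J$. The relation $T_J(p_J(c_\sigma)) = T^{\ell - \delta_{1/2}(\sigma)}(c_\sigma)$ then follows from the parity identity $h_\gamma = 2\ell - \delta_{1/2}(\sigma)$ provided by the palindromic analysis, combined with $T_J(c_\gamma^J) = T^{h_\gamma}(c_\gamma^J)$. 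The inequality $\sigma \neq \alpha$ follows from the specific position of the endpoints of $J$ along the $T$-orbit of $\partial I_\alpha$, which prevents $c_\alpha$ from serving as the midpoint. Finally, $c_\sigma$ does not lie on a non-trivial connection because the entire orbit from $c_\gamma^J$ to $T_J(c_\gamma^J)$ stays inside a continuous tower interior and therefore avoids every $T$-iterate of a discontinuity, and connections are precisely made up of such iterates.

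For Part (2), fix $\delta \in (\A \cup \{\tfrac{1}{2}\}) \setminus \{\alpha\}$ and consider $p_J(c_\delta) \in J = \bigsqcup_\gamma I_\gamma^J$. If $p_J(c_\delta)$ lies in the interior of some $I_\gamma^J$, then Part (1) applied to this $\gamma$ identifies a unique $T$-center on the forward orbit of $p_J(c_\delta)$ before the first return to $J$; since $c_\delta$ is itself such a center on this orbit, uniqueness forces $p_J(c_\delta) = c_\gamma^J$. Otherwise, $p_J(c_\delta)$ equals the left endpoint $\partial I_\gamma^J$ for some $\gamma$, which by the description of $T_J$'s discontinuities in Section \ref{sc:induced_IETs} is of the form $T^{-m_{J,\alpha'}}(\partial I_{\alpha'})$ for some $\alpha' \in \A$. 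Then $c_\delta$ lies on the $T$-orbit of the discontinuity $\partial I_{\alpha'}$, meaning $c_\delta$ is inside a non-trivial connection of $T$.

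The main obstacle is the delicate parity bookkeeping in Part (1): the constant shift between $\mathcal{I}_J$ and $\mathcal{I}_I$ in the $\beta$-symmetric case must be correctly absorbed into the $T$-iteration count, and the parity of $h_\gamma$ must align with the appropriate value of $\delta_{1/2}(\sigma)$ so that the distinguished midpoint is a genuine center of $T$, not merely a fixed point of some auxiliary composition that fails to coincide with a center.
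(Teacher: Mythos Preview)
Your Part (1) strategy is a legitimate alternative to the paper's. The paper works in the opposite direction: inside the proof of Proposition \ref{prop:induced_symmetric} it proves Claim \ref{cl:backward_centers}, which starts from a $T$-center $c_\sigma$ (with $\sigma\neq\beta$), pushes it backward by $p_J$, and shows the result is either a $T_J$-center or an endpoint $\partial I_\gamma^J$; a counting argument ($\#\mathcal{B}=\#\A_J=d-d'$) then upgrades this to a bijection, and Proposition \ref{prop:induced_centers} is read off. Your direction---start from $c_\gamma^J$, use $T_J(c_\gamma^J)=\I_J(c_\gamma^J)$ and the conjugacy \eqref{eq:generalized_conjugacy} to obtain a palindrome on the orbit segment, then locate the unique fixed point of $\I_I$ or $\I_I\circ T$ at the midpoint---is cleaner for Part (1) because it produces $\sigma$ constructively, without appealing to cardinalities.

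That said, there is a genuine gap in Part (2). Your argument reads: if $p_J(c_\delta)\in\mathring I_\gamma^J$, then ``Part (1) applied to this $\gamma$ identifies a unique $T$-center on the forward orbit of $p_J(c_\delta)$ \ldots\ uniqueness forces $p_J(c_\delta)=c_\gamma^J$.'' But Part (1) yields uniqueness of a $T$-center \emph{on the orbit of $c_\gamma^J$}, not on the orbit of an arbitrary point of $I_\gamma^J$. If $p_J(c_\delta)\neq c_\gamma^J$, its forward orbit through the tower is a parallel translate of the orbit of $c_\gamma^J$, and nothing you have proved prevents this translate from hitting a different $T$-center in a different floor. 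What is actually needed is to show that $p_J(c_\delta)$ itself satisfies $\I_J\circ T_J(p_J(c_\delta))=p_J(c_\delta)$; this is precisely the forward computation in Claim \ref{cl:backward_centers}, and once you have it, $p_J(c_\delta)$ is forced to be a center of the symmetric IET $T_J$. Without that step (or the equivalent counting: your Part (1) gives an injection $\A_J\to\mathcal{B}$, and $\#\mathcal{B}=d-d'$ by Corollary \ref{cor:centers_connections} together with the contrapositive of Corollary \ref{cor: emptyconnection}), Part (2) does not close.

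Two smaller issues. First, your justification of ``$\sigma\neq\alpha$'' conflates $\partial I_\alpha$ with $c_\alpha$; the endpoints of $J$ lie on the orbit of $\partial I_\alpha$, which says nothing about the location of the center $c_\alpha$. The exclusion that actually falls out of the construction is $\sigma\neq\beta$ (the midpoint $c_\sigma$ lies in a tower floor strictly above $J$, hence outside $J\ni c_\beta$). Second, the sentence ``the entire orbit \ldots\ avoids every $T$-iterate of a discontinuity'' is false (those iterates are dense); what is true, and sufficient, is that the orbit segment avoids the discontinuities $\{\partial I_\eta\}$ themselves, and that combined with the hypothesis that $J$ is disjoint from all connections forces any connection through $c_\sigma$ to terminate at a discontinuity before the orbit reaches $J$, which contradicts the previous sentence.
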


For the sake of clarity, we postpone the proofs of Propositions \ref{prop:induced_symmetric} and \ref{prop:induced_centers} to the end of this section and rather start by proving several preliminary lemmas.

Given a symmetric IET $T: I \to I$, it is easy to verify that the interior of $\tfrac{1}{2}$-symmetric intervals are invariant by $\I_I$. On the other hand, as we shall see below, the interior of $\alpha$-symmetric intervals are invariant by $\I_I \circ T$. 
}

\begin{lemma}\label{lem: local_reflection}
	{Let $T : I \to I$ be a symmetric IET and $J \subseteq I$ be an $\alpha$-symmetric interval, for some $\alpha \in \A$.} Then $\mathcal{I}_I\circ T(\mathring{J}) = \mathring{J}$. Moreover, $$\I_J = \mathcal{I}_I\circ T|_{\mathring{J}},$$ that is, $\mathcal{I}_I\circ T|_{\mathring{J}}$ is the symmetric reflection on $J$.
\end{lemma}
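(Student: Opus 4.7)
The plan is to use the fact that $J \subseteq I_\alpha$ so that $T$ acts as a single translation on $J$, and then to combine this with the fixed point identity \eqref{eq: centersandinvolution} to identify $\mathcal{I}_I \circ T|_{\mathring{J}}$ explicitly with the reflection around $c_\alpha$.

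First I would write $I = [a, b)$ and note that since $J \subseteq I_\alpha$, the map $T$ restricted to $\mathring J$ is the translation $x \mapsto x + (T(c_\alpha) - c_\alpha)$. For any $x \in \mathring{J}$ I would then compute directly
\[
\mathcal{I}_I \circ T(x) \;=\; a + b - \bigl( x + T(c_\alpha) - c_\alpha \bigr) \;=\; \bigl(a + b - T(c_\alpha)\bigr) + c_\alpha - x \;=\; \mathcal{I}_I(T(c_\alpha)) + c_\alpha - x,
\]
and then invoke \eqref{eq: centersandinvolution}, which gives $\mathcal{I}_I(T(c_\alpha)) = c_\alpha$, to conclude that
\[
\mathcal{I}_I \circ T(x) \;=\; 2c_\alpha - x, \qquad x \in \mathring{J}.
\]

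Second, since $\mathring{J} = (c_\alpha - \Delta, c_\alpha + \Delta)$ is symmetric around $c_\alpha$, the map $x \mapsto 2c_\alpha - x$ takes $\mathring{J}$ onto itself, which yields $\mathcal{I}_I \circ T(\mathring{J}) = \mathring{J}$. Finally, writing $J = [a_J, b_J)$ with $a_J = c_\alpha - \Delta$ and $b_J = c_\alpha + \Delta$, the symmetric reflection on $J$ is by definition $\mathcal{I}_J(x) = a_J + b_J - x = 2c_\alpha - x$ on $\mathring{J}$, which coincides with the formula obtained above. This proves the second assertion.

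There is no real obstacle here: the only thing one needs to be careful about is that $\mathring{J}$ contains no discontinuity of $T$ (guaranteed by $J \subseteq I_\alpha$), so that $T|_{\mathring{J}}$ is genuinely a translation and the identity \eqref{eq: centersandinvolution} can be applied without worrying about the domain caveats mentioned right after \eqref{eq: standardconjugacy}.
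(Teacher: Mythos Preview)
Your proof is correct and follows essentially the same approach as the paper: both use that $T$ acts as a translation on $J \subseteq I_\alpha$ and then apply \eqref{eq: centersandinvolution} to identify $\mathcal{I}_I \circ T|_{\mathring{J}}$ with the reflection $x \mapsto 2c_\alpha - x$. The only cosmetic difference is that the paper parametrizes $x = c_\alpha + \delta$ and tracks $\delta$, whereas you work directly with the explicit formula for $\mathcal{I}_I$.
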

\begin{proof}
	{Let $\alpha \in \A$ and $\Delta > 0$ such that $J = [c_\alpha - \Delta, c_\alpha + \Delta) \subseteq I_\alpha$.} Fix $x\in J$ and let $-\Delta < \delta < \Delta$ be such that $x = c_{\alpha}+\delta$. {Notice that $\I_J(x) = c_\alpha - \delta$.} 
 
	Since $T|_{I_{\alpha}}$ is a translation, by \eqref{eq: centersandinvolution}, we have
	\[
	\I_I\circ T(x)=\I_I\circ T(c_{\alpha}+\delta)=\I_I\left(T(c_{\alpha}) +\delta\right)=
	\I_I\circ T(c_{\alpha})-\delta=c_{\alpha}-\delta\,\in\, J,
		\]
		which finishes the proof.
\end{proof}

{Notice that any of the exchanged intervals of a symmetric IET $T: I \to I$ defines a symmetric interval. Hence, we obtain the following as a direct consequence of Lemmas \ref{lem:inverse_iterates} and \ref{lem: local_reflection}.
\begin{corollary}
Let $T : I \to I$ be a symmetric IET, $\alpha \in \A$ and $m \geq 1$. Then $\{c_\alpha, \ldots, T^{m - 1}(c_\alpha)\} \cap \{\partial I_\beta\}_{\beta \in \A} = \emptyset$ if and only if $\{c_\alpha, \ldots, T^{-m + 1}(c_\alpha)\} \cap \{\partial I_\beta\}_{\beta \in \A} = \emptyset$. 
\end{corollary}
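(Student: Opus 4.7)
The plan is to prove each implication using Lemma \ref{lem:inverse_iterates} together with the reflection identity \eqref{eq:sym_identity_endpoints}.

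For the forward implication, I start from the hypothesis that $\{c_\alpha, T(c_\alpha), \ldots, T^{m-1}(c_\alpha)\} \cap \{\partial I_\beta\}_{\beta \in \A} = \emptyset$ and argue by contradiction: suppose $T^{-k}(c_\alpha) = \partial I_\beta$ for some $1 \leq k \leq m-1$ and some $\beta \in \A$. For each such $k$, Lemma \ref{lem:inverse_iterates} (with $\delta_{1/2}(\alpha) = 0$ since $\alpha \in \A$) yields the identity $T^{-k}(c_\alpha) = T^{-1} \circ \I_I(T^k(c_\alpha))$, which combined with the contradiction hypothesis gives $\I_I(T^k(c_\alpha)) = T(\partial I_\beta)$. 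When $\pi_0(\beta) \neq d$, applying $\I_I$ to both sides and invoking \eqref{eq:sym_identity_endpoints} produces $T^k(c_\alpha) = \partial I_{\hat\beta}$ with $\pi_0(\hat\beta) = \pi_0(\beta) + 1$, contradicting the forward orbit hypothesis. When $\pi_0(\beta) = d$, the symmetric combinatorics force $T(\partial I_\beta) = a$ (the left endpoint of $I$); since $\I_I$ maps $(a,b)$ into itself and the hypothesis ensures $T^k(c_\alpha) \in (a,b)$ (as $a = \partial I_{\pi_0^{-1}(1)}$), the required equality $\I_I(T^k(c_\alpha)) = a$ is impossible. Note that although Lemma \ref{lem:inverse_iterates} is tautological at $k = 1$, the chain of equivalences above is substantive even in that case thanks to $\I_I \circ T(c_\alpha) = c_\alpha$.

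For the converse implication, I apply the forward direction to the symmetric IET $T^{-1}$ at the middle point $T(c_\alpha)$ of its exchanged interval $T(I_\alpha)$. Since $(T^{-1})^k(T(c_\alpha)) = T^{1-k}(c_\alpha)$ and the discontinuities of $T^{-1}$ are $\{T(\partial I_\beta)\}_{\beta}$, the injectivity of $T$ together with a shift of index by one translates the forward (respectively backward) orbit condition for $T^{-1}$ at $T(c_\alpha)$ into the backward (respectively forward) orbit condition for $T$ at $c_\alpha$; the forward implication applied to $T^{-1}$ therefore yields the backward-implies-forward statement for $T$.

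The main obstacle I anticipate is the boundary case $\pi_0(\beta) = d$ in the forward direction: identity \eqref{eq:sym_identity_endpoints} does not apply there, and one must instead directly exploit that the involution $\I_I$ is undefined at the endpoints of $I$ in order to close the contradiction.
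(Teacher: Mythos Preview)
Your proof is correct and follows essentially the same strategy as the paper: both use Lemma~\ref{lem:inverse_iterates} to write $T^{-k}(c_\alpha) = T^{-1}\circ\I_I(T^k(c_\alpha))$ and then argue that this map preserves the dichotomy ``interior of an exchanged interval versus endpoint.'' The only difference is that the paper invokes Lemma~\ref{lem: local_reflection} (so $T^{-1}\circ\I_I = \I_I\circ T$ sends $\mathring{I_\gamma}$ into $\mathring{I_\gamma}$, giving the conclusion directly and uniformly), whereas you argue the contrapositive via \eqref{eq:sym_identity_endpoints} and must treat the boundary case $\pi_0(\beta)=d$ separately; your explicit reduction of the converse to $T^{-1}$ is exactly the symmetry the paper leaves implicit.
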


We now relate the induced IET on a symmetric interval $J$ with the associated involution $\I_J$. 

\begin{lemma}
\label{lem:symmetric_condition}
	{Let $T : I \to I$ be a symmetric IET and $J \subseteq I$ be a symmetric interval. Then $T_J$ satisfies \eqref{eq: standardconjugacy}.}
\end{lemma}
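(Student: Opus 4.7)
The plan is to split the proof into the two cases of symmetric intervals ($\tfrac{1}{2}$-symmetric and $\alpha$-symmetric) and, in each case, directly reduce the desired conjugacy identity for $T_J$ to an application of the iterated conjugacy \eqref{eq:generalized_conjugacy} for $T$ itself. The guiding observation is that for a regular point $x \in \mathring J$ (i.e., $x$ not at any discontinuity of $T_J$), the forward orbit $x, T(x), \dots, T^{r-1}(x)$ with $r = r_J(x)$ avoids the discontinuities $\{\partial I_\beta\}_{\beta \in \A}$ of $T$, so \eqref{eq:generalized_conjugacy} applies cleanly along the entire excursion.

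First, when $J$ is $\tfrac{1}{2}$-symmetric, $c_{1/2}$ is the unique fixed point of $\I_I$ and $\I_I$ preserves $\mathring J$; hence $\I_J = \I_I|_{\mathring J}$ and, in particular, $\I_I(J) = J$. Fixing such a regular $x$, I would compute
\[
\I_J\!\circ T_J(x) \;=\; \I_I(T^r(x)) \;=\; T^{-r}(\I_I(x)) \;=\; T^{-r}(\I_J(x))
\]
using \eqref{eq:generalized_conjugacy} with $n = r$. Setting $y = \I_J(x)$, the same conjugacy yields $T^{-k}(y) = \I_I(T^k(x))$, and since $\I_I(J) = J$ this point lies in $J$ exactly when $T^k(x)$ does. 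Therefore the first backward return time of $y$ to $J$ equals $r$, so $T_J^{-1}(\I_J(x)) = T^{-r}(y)$, matching the previous display.

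Next, when $J$ is $\alpha$-symmetric, Lemma \ref{lem: local_reflection} gives $\I_J = \I_I\circ T$ on $\mathring J$; the same lemma, combined with the fact that $T$ acts by a single translation on $I_\alpha \supseteq J$, implies $\I_I(J) = T(J)$. Analogously to the previous case, for a regular $x \in \mathring J$ I would use \eqref{eq:generalized_conjugacy} with $n = r+1$ to obtain
\[
\I_J\!\circ T_J(x) \;=\; \I_I(T^{r+1}(x)) \;=\; T^{-(r+1)}(\I_I(x)).
\]
Setting $y = \I_J(x) = \I_I(T(x))$, the identity $T^{-k}(y) = \I_I(T^{k+1}(x))$ then shows $T^{-k}(y) \in J$ iff $T^{k+1}(x) \in \I_I(J) = T(J)$ iff $T^k(x) \in J$ (invoking injectivity of $T|_{I_\alpha}$). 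Consequently, the first backward return of $y$ occurs at time $r$, and $T_J^{-1}(\I_J(x)) = T^{-r}(y) = \I_I(T^{r+1}(x))$, matching the above computation.

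The hard part is purely bookkeeping: verifying that \eqref{eq:generalized_conjugacy} is applicable along the full orbit segment, i.e., that $T^i(x) \notin \{\partial I_\beta\}_{\beta \in \A}$ for $0 \leq i < n$ with $n = r$ or $n = r+1$. For $x$ in the interior of an exchanged interval of $T_J$, the iterates $T^i(x)$ with $0 \leq i < r$ avoid these endpoints (otherwise the return time function would jump near $x$). The only slightly delicate case is the $r$-th iterate in the $\alpha$-symmetric setting, which requires $T_J(x) \neq \partial I_\alpha$; since this fails on at most one isolated point of $J$, it contributes at most one additional exceptional point that can be absorbed into the discontinuity set of $T_J$, where \eqref{eq: standardconjugacy} is not required to hold.
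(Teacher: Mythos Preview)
Your proposal is correct and follows essentially the same approach as the paper: both split into the $\tfrac{1}{2}$-symmetric and $\alpha$-symmetric cases, reduce the identity for $T_J$ to the iterated conjugacy \eqref{eq:generalized_conjugacy} for $T$, and use Lemma~\ref{lem: local_reflection} (or the analogous invariance of $\mathring J$ under $\I_I$) to identify the backward return time of $\I_J(x)$ with the forward return time of $x$. The only cosmetic difference is that in the $\alpha$-symmetric case the paper applies \eqref{eq:generalized_conjugacy} at the point $T(x)$ with exponent $h$, whereas you apply it at $x$ with exponent $r+1$; these are equivalent computations.
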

\begin{proof}
	{Assume first that $J$ is $\alpha$-symmetric for some $\alpha\in\A$}. Let $x\in J$ {be not an endpoint of an interval exchanged by $T_J$} and let $h\in\N$ be such that $T_J(x)=T^{h}(x)$. {Notice that since $x$ is not an endpoint of the exchanged intervals, }{by \eqref{eq:generalized_conjugacy} applied to $T$,}
	\[
{(\I\circ T)\circ T_{J}(x)} = (\I\circ T) \circ T^h(x)=T^{-h}\circ (\I\circ T)(x).
	\]
 {Thus, to prove \eqref{eq: standardconjugacy}, it suffices to notice that $$T_{J}^{-1}\circ (\I\circ T)(x) = T^{-h}\circ (\I\circ T)(x).$$ 
 Indeed, by Lemma \ref{lem: local_reflection}, for any $n \geq 1$, $T^n(x) \in J$ if and only if $(\I\circ T) \circ T^n(x) \in J$. Thus the equation above follows since $T_J(x)=T^{h}(x)$ and $T^{-h}\circ (\I\circ T)(x) = (\I\circ T) \circ T^h(x).$}

 {Assume now that $J$ is $\tfrac{1}{2}$-symmetric. Let $x\in J$ be not an endpoint of an interval exchanged by $T_J$ and let $h\in\N$ be such that $T_J(x)=T^{h}(x)$. Again by \eqref{eq:generalized_conjugacy}} we get
 	\[
 \I\circ T_{J}(x) = \I\circ T^h(x)=T^{-h}\circ \I(x)
	\]
 and, similarly to the previous case, to show \eqref{eq: standardconjugacy} it is sufficient to notice that $T_{J}^{-1}\circ\I(x)=T^{-h}\circ\I(x)$.
\end{proof}

By the result above and given Corollary \ref{cor: symmetricintcont}, if $J$ is symmetric and the lengths of the continuity intervals of $T_J$ are pairwise distinct, then $T_J$ exchanges its continuity intervals symmetrically. 

The following lemma shows that if the orbit of a middle point intersects the discontinuities of the IET, then the middle point must be part of a connection.

\begin{lemma}
\label{lem:symmetric_connection}
Let $T: I \to I$ be a symmetric IET and $\beta \in \A \cup \{\tfrac{1}{2}\}$. Assume there exists $m \in \Z$ and $\alpha \in \A$ such that $T^m(c_\beta) = \partial I_\alpha$ and $T^k(c_\beta) \notin \{ \partial I_\gamma\}_{\gamma \in \A}$, for any $-|m| < k < |m|$.

Then {if $m\ge 0$}, we have $\alpha \neq \pi_0^{-1}(1)$ and
\[T^{-m - \delta_{1/2}(\beta)}(c_\beta) = \partial I_{\hat \alpha},\]
where $\delta_{1/2}$ is given by \eqref{eq:delta_1/2} and $\pi_0( \hat \alpha) = \pi_0(\alpha) - 1$.

{If on the other hand $m<0$, then $\alpha\neq \pi_1^{-1}(1)$ and
\[T^{-m - \delta_{1/2}(\beta)}(c_\beta) = \partial I_{\hat \alpha},\]
with $\pi_0( \hat \alpha) = \pi_0(\alpha) + 1$.
}

 In particular, $c_\beta$ lies inside a non-trivial connection. 
\end{lemma}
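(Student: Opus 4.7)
The proof I would give combines the generalized symmetric identity \eqref{eq:generalized_conjugacy} with the endpoint identities \eqref{eq:sym_identity_endpoints} and \eqref{eq:sym_identity_endpoints_2}, exploiting the facts that $\I_I(c_{1/2}) = c_{1/2}$ and $\I_I(c_\alpha) = T(c_\alpha)$ for $\alpha \in \A$. I would handle the forward case $m \geq 0$ in full detail, then derive the backward case $m < 0$ by the symmetric argument (which amounts to applying the forward case to the symmetric IET $T^{-1}$), and finally obtain the ``in particular'' conclusion by concatenating the two formulas.

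In the forward case, I would first verify $\alpha \neq \pi_0^{-1}(1)$, i.e., $T^m(c_\beta) \neq a$. If $m \geq 1$ and $T^m(c_\beta) = a$, then by right-continuity the unique $T$-preimage of $a$ is $\partial I_{\pi_0^{-1}(d)}$, which forces $T^{m-1}(c_\beta)$ to be a discontinuity, violating the hypothesis on intermediate iterates. If $m = 0$, then either $\beta \in \A$---impossible since $c_\beta$ lies in the interior of $I_\beta$---or $\beta = \tfrac{1}{2}$, in which case $c_{1/2} \neq a$ directly. The main calculation is then to apply \eqref{eq:generalized_conjugacy} with $x = c_\beta$ and $n = m$ (the hypothesis $T^k(c_\beta) \notin \{\partial I_\gamma\}_{\gamma \in \A}$ for $0 \leq k < m$ is exactly what the identity requires), yielding $\I_I(T^m(c_\beta)) = T^{-m}(\I_I(c_\beta))$. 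Substituting $T^m(c_\beta) = \partial I_\alpha$ on the left, using $\I_I(\partial I_\alpha) = T(\partial I_{\hat\alpha})$ (a rewriting of \eqref{eq:sym_identity_endpoints_2}, available because $\alpha \neq \pi_0^{-1}(1)$), and inserting the appropriate value of $\I_I(c_\beta)$ on the right gives $T^{-m-\delta_{1/2}(\beta)}(c_\beta) = \partial I_{\hat\alpha}$ in both subcases $\beta \in \A$ and $\beta = \tfrac{1}{2}$. The degenerate case $m = 0,\,\beta = \tfrac{1}{2}$ is handled by a direct application of \eqref{eq:sym_identity_endpoints_2}, since then $c_{1/2} = \partial I_\alpha$ and $\I_I(c_{1/2}) = c_{1/2}$.

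For $m < 0$, I would run the analogous argument, applying \eqref{eq:generalized_conjugacy} to $T(\partial I_\alpha) = T^{m+1}(c_\beta)$ with $|m|-1$ forward steps and invoking \eqref{eq:sym_identity_endpoints} in place of \eqref{eq:sym_identity_endpoints_2}, which produces the index shift $\pi_0(\hat\alpha) = \pi_0(\alpha) + 1$; the exclusion $\alpha \neq \pi_1^{-1}(1) = \pi_0^{-1}(d)$ follows by the same preimage argument used before. For the ``in particular'' claim, combining the two formulas produces $T^{-(2m+\delta_{1/2}(\beta))}(\partial I_\alpha) = \partial I_{\hat\alpha}$; the exponent is positive in every surviving case (the degenerate $m=0,\,\beta\in\A$ having been excluded above), and non-triviality of the resulting connection follows from $\pi_0(\alpha) \neq 1$ together with $\pi_1(\hat\alpha) = d + 2 - \pi_0(\alpha) \neq 1$, the latter being a consequence of the symmetry of $\pi$. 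The main subtlety throughout is that \eqref{eq:generalized_conjugacy} must be applied even though $T^m(c_\beta)$ is itself a discontinuity; this is harmless because the hypothesis of \eqref{eq:generalized_conjugacy} constrains only the iterates $T^i(x)$ with $0 \leq i < n$, not $T^n(x)$ itself.
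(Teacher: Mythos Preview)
Your proposal is correct and follows essentially the same route as the paper. The only cosmetic difference is that the paper packages the core computation through Lemma~\ref{lem:inverse_iterates} (which itself is just \eqref{eq:generalized_conjugacy} combined with $\I_I(c_{1/2})=c_{1/2}$ and $\I_I(c_\alpha)=T(c_\alpha)$), whereas you invoke \eqref{eq:generalized_conjugacy} directly; the paper also treats $m<0$ first and leaves $m>0$ as the analogous case, the reverse of your ordering. Your verification of the exclusions $\alpha\neq\pi_0^{-1}(1)$ (resp.\ $\alpha\neq\pi_1^{-1}(1)$) and of the non-triviality of the resulting connection is slightly more explicit than the paper's, but the underlying argument is the same.
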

\begin{proof}

 If $m = 0$, then necessarily $\beta = \tfrac{1}{2}$ and we have $c_{1/2} = \partial I_\alpha$ with $\pi_1(\alpha) \neq 1$. By \eqref{eq:sym_identity_endpoints_2}, $T^{-1}(c_{1/2}) = \partial I_{\hat \alpha}$, where $\pi_0( \hat \alpha) = \pi_0(\alpha) - 1$.
 
Without loss of generality, let us assume $m < 0$, the case $m > 0$ being analogous. By \eqref{eq:inverse_iterates},
\begin{equation*}
\label{eq:inverse_iterates_negative}
T^m(c_{\beta})=T^{-1}\circ \I_I(T^{-m-\delta_{1/2}(\beta)})\ \Leftrightarrow\ T^{-m- \delta_{1/2}(\beta) }(c_\beta) = \I_I\circ T \big( T^{m}(c_\beta)\big).
\end{equation*}
Notice that since $T^{m + 1}(c_\beta) \notin \{ \partial I_\gamma\}_{\gamma \in \A}$ then $T^{m}(c_\beta) = \partial I_\alpha \neq \partial I_{\pi_0^{-1}(1)}$. Hence, by \eqref{eq:sym_identity_endpoints}, the equation above implies $T^{-m - \delta_{1/2}(\beta)}(c_\beta) = \partial I_{\hat \alpha}$, where $\pi_0( \hat \alpha) = \pi_0(\alpha) + 1$.


\end{proof}

The previous lemma immediately implies the following. 

\begin{corollary}
\label{cor:centers_connections}
Let $T: I \to I$ be a symmetric IET. Then any non-trivial connection of $T$ contains at most one point from the set $\left\{ c_\alpha \mid \alpha \in \A \cup \{\tfrac{1}{2}\} \right\}.$ In particular, there exists $\alpha \in \A$ such that $c_\alpha$ does not belong to any connection.
\end{corollary}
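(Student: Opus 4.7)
The plan is to introduce the involution $S := T^{-1} \circ \I_I$ in order to upgrade the local symmetry of Lemma~\ref{lem:symmetric_connection} to a global one, and then to derive a contradiction via a translation argument. Using \eqref{eq: standardconjugacy} one checks that $S \circ S = \id$ (where defined) and $S \circ T = T^{-1} \circ S$; by \eqref{eq: centersandinvolution} one has $S(c_\alpha) = c_\alpha$ for each $\alpha \in \A$ and $S(c_{1/2}) = T^{-1}(c_{1/2})$; and by \eqref{eq:sym_identity_endpoints_2}, $S$ maps each discontinuity $\partial I_\gamma$ with $\pi_0(\gamma) \geq 2$ to the discontinuity $\partial I_{\hat\gamma}$ with $\pi_0(\hat\gamma) = \pi_0(\gamma) - 1$. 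Combining these, one obtains $S(T^k(c_\beta)) = T^{-k - \delta_{1/2}(\beta)}(c_\beta)$ for every $k \in \Z$ and every $\beta \in \A \cup \{\tfrac{1}{2}\}$, and therefore the set $D_\beta := \{k \in \Z \mid T^k(c_\beta) \in \{\partial I_\gamma\}_{\gamma \in \A}\}$ is invariant under the involution $k \mapsto -k - \delta_{1/2}(\beta)$. This is the sought extension of Lemma~\ref{lem:symmetric_connection} from the minimal iterate to all iterates.

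Suppose now for contradiction that two distinct centers $c_\beta, c_{\beta'}$ lie in the same non-trivial connection $C = \{T^k(\partial I_{\gamma_0})\}_{k=0}^{n}$, say $c_\beta = T^j(\partial I_{\gamma_0})$ and $c_{\beta'} = T^{j'}(\partial I_{\gamma_0})$ with $0 \leq j < j' \leq n$, and set $N := j' - j \geq 1$. Applying the previous step to both $c_\beta$ and $c_{\beta'}$, the set $D_\beta$ is simultaneously invariant under the two involutions $k \mapsto -k - \delta_{1/2}(\beta)$ and $k \mapsto 2N - k - \delta_{1/2}(\beta')$. Their composition is the translation $k \mapsto k + 2N + \delta_{1/2}(\beta) - \delta_{1/2}(\beta')$, whose shift is a non-zero integer because $2N \geq 2$ while $|\delta_{1/2}(\beta) - \delta_{1/2}(\beta')| \leq 1$. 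Since $D_\beta$ is non-empty (it contains $-j$ and $n - j$, corresponding to the two endpoints of $C$) and has cardinality at most $\#\A$ in the non-periodic case, it cannot be invariant under a non-trivial translation---contradiction.

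For the \emph{in particular} part, a counting argument suffices: the previous step shows that each $T$-orbit of a discontinuity contains at most one center from $\A \cup \{\tfrac{1}{2}\}$, and the trivial connection $T(\partial I_{\pi_1^{-1}(1)}) = \partial I_{\pi_0^{-1}(1)}$ places two of the $d$ discontinuities in the same orbit, leaving at most $d - 1$ distinct discontinuity orbits. Hence at most $d - 1$ of the $d + 1$ centers lie in a discontinuity orbit, so at least two centers (and in particular at least one corresponding to $\alpha \in \A$) lie in no connection at all. The main anticipated obstacle is making the extension of Lemma~\ref{lem:symmetric_connection} fully rigorous: one needs to handle the boundary point $a = \partial I_{\pi_0^{-1}(1)}$, where $S$ is not directly defined (which can be resolved via the trivial connection $T^{-1}(a) = \partial I_{\pi_1^{-1}(1)}$), together with the degenerate periodic orbit case, which can be treated separately using Lemma~\ref{lem: notallconnections}.
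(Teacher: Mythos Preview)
Your argument has a genuine gap: the identity $S(T^k(c_\beta)) = T^{-k-\delta_{1/2}(\beta)}(c_\beta)$ does \emph{not} hold for all $k\in\Z$. The intertwining $S\circ T = T^{-1}\circ S$ you derive from \eqref{eq: standardconjugacy} is valid only at points that are \emph{not} discontinuities, so once the orbit of $c_\beta$ meets $\{\partial I_\gamma\}_{\gamma\in\A}$---which is exactly the situation you are in---the relation breaks. Concretely, if $m$ is minimal with $T^m(c_\beta)=\partial I_\gamma$ (and, say, $\beta\in\A$), then by \eqref{eq:sym_identity_endpoints} one has $S(T^{m+1}(c_\beta))=T^{-1}\!\circ\I\!\circ T(\partial I_\gamma)=T^{-1}(\partial I_{\gamma^{+}})$ with $\pi_0(\gamma^{+})=\pi_0(\gamma)+1$, whereas $T^{-(m+1)}(c_\beta)=T^{-1}(\partial I_{\hat\gamma})$ with $\pi_0(\hat\gamma)=\pi_0(\gamma)-1$. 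These differ, and more to the point the \emph{next} forward discontinuity time from $\partial I_\gamma$ is $N(\gamma)$ while the next backward one from $\partial I_{\hat\gamma}$ is $M(\hat\gamma)$; by Lemma~\ref{lem: sym_endpoints} the latter equals $N(\pi_0^{-1}(\pi_0(\gamma)-2))$, which has no reason to coincide with $N(\gamma)$. Hence $D_\beta$ is in general \emph{not} invariant under $k\mapsto -k-\delta_{1/2}(\beta)$, the translation argument collapses, and so does your ``in particular'' step (which relies on the stronger unproved claim that each discontinuity \emph{orbit} carries at most one center).

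The paper's route is far more direct and avoids this issue entirely. If $c_\beta$ lies in a minimal non-trivial connection of length $n$, say $c_\beta=T^j(\partial I_{\gamma_0})$ with $0<j<n$ and no intermediate discontinuities, then applying Lemma~\ref{lem:symmetric_connection} to the smaller of $j$ and $n-j$ forces $n-j=j-\delta_{1/2}(\beta)$, i.e.\ $2j=n+\delta_{1/2}(\beta)$. This pins down both $j$ and the parity of $n$ (hence whether $\beta=\tfrac12$ or $\beta\in\A$), so at most one point of $\{c_\alpha\}$ can lie in that connection. For the final assertion, count minimal connections rather than orbits: there are at most $d-1$ of them (indexed by their terminal point $\partial I_\beta$, $\pi_0(\beta)\neq 1$), each absorbing at most one of the $d+1$ centers, so at least two centers---and in particular some $c_\alpha$ with $\alpha\in\A$---belong to no connection.
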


The following result provides a `recipe' to construct symmetric intervals dynamically by using iterates of the endpoints of the exchanged intervals.}


\begin{lemma}\label{lem: sym_endpoints}
{Let $T : I \to I$ be a symmetric IET.}
Let $\alpha\in\mathcal A\setminus\{\pi_0^{-1}(1)\}$ and 
{$m< M(\alpha)$}. Then 
\begin{equation}\label{eq: connections}
\I_I\circ T(T^{-m}(\partial I_{\alpha}))={\I_I\circ (T^{-m+1}(\partial I_{\alpha}))}=T^m(\partial I_{\hat\alpha}),
\end{equation}
where $\pi_0(\hat\alpha)=\pi_0(\alpha)-1$.

 In particular, the left-closed right-open interval with endpoints $T^{-m}(\partial I_{\alpha})$ and $T^m(\partial I_{\hat\alpha})$ is {$\beta$-symmetric for some $\beta\in\A$, while the left-closed right-open interval with endpoints $T^{-m+1}(\partial I_{\alpha})$ and $T^m(\partial I_{\hat\alpha})$ is $\tfrac{1}{2}$-symmetric}. 
 
 Moreover, $M(\alpha)= N(\hat\alpha)$.
\end{lemma}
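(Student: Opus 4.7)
The plan is to first establish the identity $\mathcal{I}_I(T^{-m+1}(\partial I_\alpha)) = T^m(\partial I_{\hat\alpha})$ for $1 \leq m \leq M(\alpha)$ by induction on $m$, then to read off the two symmetric interval statements as direct geometric consequences, and finally to derive $M(\alpha) = N(\hat\alpha)$ by a contradiction argument. For the base case $m = 1$, a rearrangement of \eqref{eq:sym_identity_endpoints_2} applied to $\alpha$ yields $\mathcal{I}_I(\partial I_\alpha) = T(\partial I_{\hat\alpha})$. For the inductive step from $m$ to $m+1$ with $m < M(\alpha)$, the definition of $M(\alpha)$ guarantees that $T^{-m}(\partial I_\alpha)$ is not a discontinuity of $T$, so \eqref{eq: standardconjugacy} applies at this point, giving
\[
\mathcal{I}_I\circ T(T^{-m}(\partial I_\alpha)) = T^{-1}\circ \mathcal{I}_I(T^{-m}(\partial I_\alpha)).
\]
Substituting the inductive hypothesis into the left-hand side and applying $T$ to both sides yields $\mathcal{I}_I(T^{-m}(\partial I_\alpha)) = T^{m+1}(\partial I_{\hat\alpha})$, closing the induction.

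For the symmetry claims, set $p := T^{-m}(\partial I_\alpha)$ and $q := T^m(\partial I_{\hat\alpha})$. Since $m < M(\alpha)$, the point $p$ is not a discontinuity, so it lies in the interior of a unique $I_\beta$. On $I_\beta$ the map $T$ acts by a translation and $\mathcal{I}_I$ is the reflection about $c_{1/2}$, so $\mathcal{I}_I \circ T$ restricted to $I_\beta$ is an affine involution; by \eqref{eq: centersandinvolution} it fixes $c_\beta$, hence it coincides with the reflection about $c_\beta$. Since $q = \mathcal{I}_I \circ T(p)$ by the main identity and the reflection about $c_\beta$ preserves $I_\beta$, we get $q \in I_\beta$ and the midpoint of the interval with endpoints $p, q$ is $c_\beta$, so this interval is $\beta$-symmetric. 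For the other interval, the identity reads $\mathcal{I}_I(T^{-m+1}(\partial I_\alpha)) = T^m(\partial I_{\hat\alpha})$, so its two endpoints are reflected to each other by $\mathcal{I}_I$, forcing its midpoint to be the unique fixed point $c_{1/2}$; this gives the $\tfrac{1}{2}$-symmetry.

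Finally, to prove $M(\alpha) = N(\hat\alpha)$, I would argue by two inequalities. If $N(\hat\alpha) < M(\alpha)$, letting $k := N(\hat\alpha)$ and $\delta$ be such that $T^k(\partial I_{\hat\alpha}) = \partial I_\delta$, the identity at $m = k$ gives $\partial I_\delta = \mathcal{I}_I(T^{-k+1}(\partial I_\alpha))$; applying $\mathcal{I}_I$ and \eqref{eq:sym_identity_endpoints_2} yields $T^{-k}(\partial I_\alpha) = \partial I_{\tilde\delta}$ with $\pi_0(\tilde\delta) = \pi_0(\delta) - 1$, forcing $M(\alpha) \leq k$, a contradiction. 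The reverse inequality proceeds symmetrically, using the identity at $m = M(\alpha)$ (still within the range of the induction) and \eqref{eq:sym_identity_endpoints} applied to $\gamma := T^{-M(\alpha)}(\partial I_\alpha)$ to exhibit $\partial I_{\hat\gamma}$ as the $M(\alpha)$-th forward image of $\partial I_{\hat\alpha}$. The main obstacle will be handling the boundary cases in which \eqref{eq:sym_identity_endpoints_2} or \eqref{eq:sym_identity_endpoints} cannot be applied directly, namely when $\delta = \pi_0^{-1}(1)$ or $\gamma = \pi_0^{-1}(d)$; under symmetric combinatorics these correspond to the trivial connection $T(\partial I_{\pi_0^{-1}(d)}) = a = \partial I_{\pi_0^{-1}(1)}$, and each possibility will need to be ruled out by showing that it would produce a connection strictly before $M(\alpha)$ or $N(\hat\alpha)$, contradicting minimality.
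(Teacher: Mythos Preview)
Your proposal is correct and follows essentially the same route as the paper: the paper obtains \eqref{eq: connections} in one shot via the iterated conjugacy \eqref{eq:generalized_conjugacy} together with the base case $\I_I(\partial I_\alpha)=T(\partial I_{\hat\alpha})$, while you rederive the same thing by induction using the single-step relation \eqref{eq: standardconjugacy}; the paper then proves $N(\hat\alpha)\le M(\alpha)$ directly (exactly your second inequality, combining the identity at $m=M(\alpha)$ with \eqref{eq:sym_identity_endpoints}) and declares the reverse analogous. Your more explicit derivation of the $\beta$- and $\tfrac{1}{2}$-symmetry (essentially Lemma~\ref{lem: local_reflection}) and your flagging of the boundary cases $\delta=\pi_0^{-1}(1)$, $\gamma=\pi_0^{-1}(d)$ are welcome additions; both cases are indeed excluded by the minimality of $N(\hat\alpha)$, $M(\alpha)$ together with the trivial connection $T(\partial I_{\pi_0^{-1}(d)})=\partial I_{\pi_0^{-1}(1)}$ and the hypothesis $\alpha\neq\pi_0^{-1}(1)$, exactly as you outline.
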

\begin{proof}
{
First, notice that \eqref{eq: connections} is equivalent to \eqref{eq:sym_identity_endpoints} if $m = 1$. This implies that $\I_I(\partial I_\alpha) = T(\partial I_{\hat\alpha})$.} 
In the following, we assume $m > 1$. If $m < M(\alpha)$, by \eqref{eq:generalized_conjugacy}, 
\[\I\circ T(T^{-m}(\partial I_{\alpha}))= \I \circ T^{-m + 1}(\partial I_\alpha) = T^{m - 1} \circ \I (\partial I_\alpha) = T^m(\partial I_{\hat \alpha}). \]
This proves \eqref{eq: connections}, which easily implies that the symmetricity properties of the intervals in the statement.

Assume now that $m=M(\alpha)$ and $T^{-m}(\partial I_{\alpha})=\partial I_{\beta}$ for some $\beta\in \A$. Since $M(\alpha)$ is the minimal number that makes a connection, by \eqref{eq: connections} we have 
\[
\I\circ T(T(\partial I_{\beta}))=\I\circ T(T^{-{m+1}}\partial I_{\alpha})=T^{m-1}(\partial I_{\hat\alpha}).
\]
Moreover, noticing that $T(\partial I_\beta) \notin \{ \partial I_\alpha\}_{\alpha \in \A}$ since $m > 1$, by \eqref{eq: standardconjugacy} 
\[
\I\circ T(T(\partial I_{\beta}))=T^{-1}\circ\I\circ T(\partial I_{\beta})
=T^{-1}(\partial I_{\hat\beta}),
\]
where $\pi_0(\hat\beta)=\pi_0(\beta)+1$. Thus we get $T^{m}(\partial I_{\hat\alpha})=\partial I_{\hat\beta}$. This proves $N(\hat\alpha)\le M(\alpha)$. The opposite inequality is proven analogously, {by exchanging the roles of $T$ and $T^{-1}$.}
\end{proof}

Using the previous lemma, it is not difficult to construct symmetric intervals as in the statement of Proposition \ref{prop:induced_symmetric}. 

\begin{lemma}
\label{lem:sym_int_disjoint}
Let $T: I \to I$ be an ergodic symmetric IET. Then there exists $\beta \in \A$ such that $c_\beta$ is not part of any non-trivial connection from $T$ and, for any $\epsilon > 0$, there exists a $\beta$-symmetric subinterval $J\subseteq I$ disjoint from the connections of $T$ satisfying $|J| < \epsilon$.
\end{lemma}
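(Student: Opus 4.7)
The plan is to combine Corollary~\ref{cor:centers_connections} with the elementary observation that the full set of connection points of $T$ is finite.

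First, I would apply Corollary~\ref{cor:centers_connections} to obtain $\beta \in \A$ such that $c_\beta$ lies outside every non-trivial connection of $T$. This already yields the $\beta$ required by the statement.

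Next, I would argue that the full set $C \subseteq I$ of points belonging to a connection of $T$ (trivial or non-trivial) is finite. Indeed, there are at most $|\A|-1$ non-trivial connections, one per $\alpha \in \A \setminus \{\pi_0^{-1}(1)\}$ with $M(\alpha) < \infty$, and each such connection consists of the finite backward orbit segment $\{T^{-k}(\partial I_\alpha)\}_{k=0}^{M(\alpha)}$; together with the two-point trivial connection $\{\partial I_{\pi_1^{-1}(1)}, \partial I_{\pi_0^{-1}(1)}\}$, the union is a finite set. Since the trivial connection consists of endpoints of exchanged intervals, which are never midpoints $c_\gamma$, the choice of $\beta$ gives $c_\beta \notin C$.

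Finally, given $\epsilon > 0$, I would choose $\Delta > 0$ small enough that (i)~$\Delta < |I_\beta|/2$, ensuring $J := [c_\beta - \Delta, c_\beta + \Delta) \subseteq I_\beta$ (possible since $c_\beta$ is interior to $I_\beta$); (ii)~$\Delta < \mathrm{dist}(c_\beta, C)$, ensuring $J \cap C = \emptyset$ (possible since $C$ is finite and $c_\beta \notin C$); and (iii)~$2\Delta < \epsilon$. By construction $J$ is a $\beta$-symmetric subinterval of length less than $\epsilon$ disjoint from every connection of $T$.

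The argument is essentially a routine finiteness/separation argument once $\beta$ has been identified; the only substantive input is Corollary~\ref{cor:centers_connections}, which is where the symmetricity hypothesis enters, so I do not expect any genuine obstacle in carrying it out.
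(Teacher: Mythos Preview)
Your argument is correct for the statement as literally written, but it follows a different and more elementary route than the paper. You simply observe that the set of connection points is finite and take a small enough centered interval $[c_\beta-\Delta,c_\beta+\Delta)$; this suffices to produce \emph{some} $\beta$-symmetric interval disjoint from the connections.

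The paper instead invokes Corollary~\ref{cor: existenceofinfiniteorbit} to pick $\alpha$ with $M(\alpha)=+\infty$, uses minimality to find $m$ with $T^{-m}(\partial I_\alpha)$ close to $c_\beta$, and then applies Lemma~\ref{lem: sym_endpoints} to conclude that the interval with endpoints $T^{-m}(\partial I_\alpha)$ and $T^{m}(\partial I_{\hat\alpha})$ is $\beta$-symmetric. The payoff is that the resulting $J$ is of the special form~\eqref{eq: paramofJ}, with endpoints lying on orbits of discontinuities. This is precisely the hypothesis required in Proposition~\ref{prop:induced_symmetric} and Proposition~\ref{prop: unwinding}, and it is how the lemma is actually used later (e.g., in Section~5, where the $J_n$ are explicitly taken with endpoints $T^{-m_n}(\partial I_\alpha)$, $T^{m_n}(\partial I_{\hat\alpha})$). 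Your construction does not give this extra structure, so while it proves the lemma as stated, it would not feed directly into those applications without further work.
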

\begin{proof}
By Corollary \ref{cor:centers_connections}, there exists $\beta \in \A$ such that $c_\beta$ is not part of any non-trivial connection of $T$ and, by Corollary \ref{cor: existenceofinfiniteorbit}, there exists $\alpha \in \A \,\setminus\, \{\pi_0^{-1}(1)\}$ such that $M(\alpha) = +\infty$. 

Since $T$ is ergodic and hence minimal, there exists $m \geq 1$ such that $\big|T^{-m}(\partial I_\alpha) - c_\beta \big| < \tfrac{\epsilon}{2}.$ By taking $\epsilon$ smaller if necessary, we may assume that $\epsilon < \min_{\delta \in \A} |I_\delta|$ and that $(c_\beta - \epsilon, c_\beta + \epsilon)$ does not contain any point from any connection.

Then, by Lemma \ref{lem: sym_endpoints}, the left-closed right-open subinterval $J$ with endpoints $T^{-m}(\partial I_\alpha)$, $T^{m}(\partial I_{\hat \alpha})$ is $\beta$-symmetric, where $\pi_0(\hat \alpha) = \pi_0(\alpha) - 1$. In particular $J \subseteq (c_\beta - \tfrac{\epsilon}{2}, c_\beta + \tfrac{\epsilon}{2}).$
\end{proof}

We are now in a position to prove Propositions \ref{prop:induced_symmetric} and \ref{prop:induced_centers}.

\begin{proof}[Proof of Proposition \ref{prop:induced_symmetric}]
Let $T = (\pi, \lambda), J, m, \alpha, \hat \alpha$ as in the statement of the proposition, with $J$ not necessarily disjoint from the connections of $T$.

By Lemma \ref{lem: sym_endpoints}, there exists $\beta \in \A \cup \{\tfrac{1}{2}\}$ such that $J$ is $\beta$-symmetric, where $\beta = \tfrac{1}{2}$ only if the endpoints of $J$ are of the form $T^{-m + 1}(\partial I_\alpha)$, $T^m(\partial I_{\hat \alpha})$. Then, by Lemma \ref{lem:symmetric_condition}, the induced IET $T_J$ satisfies \eqref{eq: standardconjugacy}. 

From now on, let us assume that $J$ does not contain any point from any connection. 

By Lemma \ref{lem: alphabetafetrconnections}, it follows that $T_J = (\pi^J, \lambda^J)$ is an IET on $d - d'$ intervals, where $d'$ is the number of non-trivial connections of $T$. In view of Proposition \ref{prop: unwinding}, there exists $\tilde\lambda\in\R^{\A}$ such that $\tilde{T}:=(\pi,\tilde\lambda)$ is symmetric, the IET $\tilde{T}_{\tilde J} = (\tilde \pi^J, \tilde \lambda^J)$ obtained by inducing $\tilde{T}$ to the interval $\tilde J$ with endpoints $T^{-m}(\partial \tilde I_{\alpha})$ and $T^{m}(\partial \tilde I_{\hat\alpha})$ has the same combinatorics as $T_J$, and $\tilde\lambda^J$ has intervals of rationally independent lengths. 

Since $\tilde J$ is a symmetric interval for $\tilde T$, by Lemma \ref{lem:symmetric_condition} the induced IET $\tilde T_J$ satisfies \eqref{eq: standardconjugacy}. Thus, by Corollary \ref{cor: symmetricintcont}, $\tilde{T}_{\tilde J}$ exchanges its maximal continuity intervals symmetrically, and therefore so does $T_J$.

Hence, to prove that $T_J$ is a symmetric IET, it suffices to show that it possesses exactly $d - d'$ maximal continuity intervals. By replacing $T_J$ with $\tilde{T}_{\tilde J}$ if necessary, we may assume that the intervals exchanged by $T_{J}$ are of rationally independent lengths. Then also the intervals of continuity of $T_J$ are of rationally independent lengths. Let $\{I^J_{\alpha}\}_{\alpha\in\mathcal \A_J}$ be the intervals exchanged by $T_J$ and let $\{\hat I^J_{\alpha}\}_{\alpha\in\mathcal C}$ be the maximal continuity intervals of $T_J$. Then, to finish the proof, it suffices to show that $\#\mathcal C = \#\mathcal \A_J$. 
 
For every $\gamma\in\mathcal C$ consider the point $\hat c_\gamma^J$, the center-point of the interval $\hat I^J_{\gamma}$. Since $T_J$ interchanges the intervals of continuity symmetrically, we have $T_J(\hat c_\gamma^J)=\I_J(\hat c_\gamma^J)$ and no other point satisfies this equation. Moreover, since the lengths of intervals exchanged by $T_J$ are rationally independent, we also have $\hat c_\gamma^J \notin\{\partial I^J_{\alpha}\}_{\alpha\in\A_J}.$ 

\begin{claim}
\label{cl:backward_centers}
Let $\sigma \in \A \cup \{\tfrac{1}{2}\}$ with $\sigma \neq \beta$ and $\ell := b_J(c_\sigma) \geq 1$ be the first backwards return time of $c_\sigma$ to $J$, that is, such that $p_J(c_\sigma) = T^{-\ell}(c_\sigma)$. Then, the following dichotomy holds.
\begin{itemize}
\item either $\{c_\sigma, T(c_\sigma), \ldots, T^{\ell - \delta_{1/2}(\sigma)}(c_\sigma)\} \cap \{\partial I_\delta\}_{\delta \in \A} = \emptyset$ and $$p_J(c_\sigma) = \hat c_\gamma^J,\qquad T_J(\hat c_\gamma^J) = T^{\ell - \delta_{1/2}(\sigma)}(c_\sigma) = T^{2\ell - \delta_{1/2}(\sigma)}(\hat c_\gamma^J), \quad \text{ for some } \gamma \in \mathcal C,$$
\item or $\{c_\sigma, T(c_\sigma), \ldots, T^{\ell - \delta_{1/2}(\sigma))}(c_\sigma)\} \cap \{\partial I_\delta\}_{\delta \in \A} \neq \emptyset$ and $$p_J(c_\sigma) = \partial I_\gamma^J,\quad\text{ for some } \gamma \in \A_J,$$ 
\end{itemize}
where $\delta_{1/2}$ is given by \eqref{eq:delta_1/2}. Moreover, in the latter case, $c_\sigma$ lies in a non-trivial connection of $T$.
\end{claim}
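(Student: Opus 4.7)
The plan is to establish the dichotomy by combining Lemma~\ref{lem:inverse_iterates} with the involution structure on $J$. The key identity I would exploit is
\[T^{-\ell}(c_\sigma) = T^{-1} \circ \I_I\bigl( T^{\ell - \delta_{1/2}(\sigma)}(c_\sigma) \bigr),\]
which follows from Lemma~\ref{lem:inverse_iterates} as soon as the orbit $\{c_\sigma, T(c_\sigma), \ldots, T^{\ell - 1}(c_\sigma)\}$ avoids the discontinuity set $\{\partial I_\delta\}_{\delta \in \A}$. I would couple this with the involution identity $\I_J = \I_I \circ T|_{\mathring J}$ when $J$ is $\beta$-symmetric (from Lemma~\ref{lem: local_reflection}), respectively $\I_J = \I_I|_{\mathring J}$ when $J$ is $\tfrac{1}{2}$-symmetric, so as to translate $\I_I$-symmetry on $I$ into $\I_J$-symmetry on $J$.

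For Case 1, assuming the orbit up to $T^{\ell - \delta_{1/2}(\sigma)}(c_\sigma)$ avoids all discontinuities, I would combine the identities above to express $\I_J(T^{-\ell}(c_\sigma))$ as a forward iterate of $c_\sigma$. A minimality argument pairing the backward and forward orbits of $c_\sigma$ via $\I_I$ would then show that this forward iterate lies in $J$ and is the first forward return of $T^{-\ell}(c_\sigma)$ to $J$, so that $T_J(T^{-\ell}(c_\sigma)) = \I_J(T^{-\ell}(c_\sigma)) = T^{\ell - \delta_{1/2}(\sigma)}(c_\sigma)$. Since the centers of the maximal continuity intervals of the symmetric IET $T_J$ are precisely those points $x \in J$ satisfying $T_J(x) = \I_J(x)$ (as established in the proof of Proposition~\ref{prop:induced_symmetric}), this would identify $T^{-\ell}(c_\sigma)$ with $\hat c^J_\gamma$ for some $\gamma \in \mathcal C$.

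For Case 2, I would pick the smallest $0 \leq k \leq \ell - \delta_{1/2}(\sigma)$ such that $T^k(c_\sigma) = \partial I_\delta$ for some $\delta \in \A$, and invoke Lemma~\ref{lem:symmetric_connection} to produce a matching discontinuity on the opposite side of $c_\sigma$ along its $T$-orbit, placing $c_\sigma$ inside a non-trivial connection of $T$. Writing $T^{-\ell}(c_\sigma) = T^{-\ell - k}(\partial I_\delta)$ and checking, by the minimality of $\ell$ as a first backward return, that $\ell + k$ is the first backward time at which the orbit of $\partial I_\delta$ enters $J$, would then identify $p_J(c_\sigma)$ with a discontinuity of $T_J$, i.e., with an endpoint $\partial I^J_\gamma$ for some $\gamma \in \A_J$.

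The main obstacle I foresee is the minimality step in Case 1: verifying rigorously that no intermediate forward iterate of $T^{-\ell}(c_\sigma)$ enters $J$ before time $2\ell - \delta_{1/2}(\sigma)$. This amounts to careful bookkeeping via \eqref{eq:generalized_conjugacy}, with a minor case split between the $\beta$-symmetric and $\tfrac{1}{2}$-symmetric geometries of $J$, each of which dictates a slightly different pairing between positive and negative iterates of $c_\sigma$ relative to $J$.
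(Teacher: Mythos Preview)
Your approach is essentially the paper's and is correct in outline. For Case~1 you and the paper both use Lemma~\ref{lem:inverse_iterates} together with Lemma~\ref{lem: local_reflection} to pair forward and backward iterates of $c_\sigma$ relative to $J$, deduce that the first forward return of $p_J(c_\sigma)$ to $J$ occurs at time $2\ell-\delta_{1/2}(\sigma)$, and then identify $p_J(c_\sigma)$ as a fixed point of $\I_J\circ T_J$, hence a center $\hat c_\gamma^J$.

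There is one small gap in your Case~2. You propose to write $p_J(c_\sigma)=T^{-(\ell+k)}(\partial I_\delta)$ and argue that $\ell+k=m_{J,\delta}$ ``by the minimality of $\ell$ as a first backward return''. But the minimality of $\ell$ only controls \emph{backward} iterates of $c_\sigma$; it does not by itself rule out that some forward iterate $T^i(c_\sigma)=T^{-(k-i)}(\partial I_\delta)$ with $1\le i\le k$ already lies in $\mathring J$. You can close this by observing that all such points lie on the non-trivial connection through $c_\sigma$ (the segment between $\partial I_{\hat\delta}$ and $\partial I_\delta$) and that $J$ is assumed disjoint from connections, but this step is missing from your sketch. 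The paper avoids the issue altogether: rather than $\partial I_\delta$ it uses the \emph{backward} discontinuity $\partial I_{\hat\delta}=T^{-k-\delta_{1/2}(\sigma)}(c_\sigma)$ furnished by Lemma~\ref{lem:symmetric_connection}. Since $-k-\delta_{1/2}(\sigma)\ge -\ell$, the point $\partial I_{\hat\delta}$ sits between $c_\sigma$ and $p_J(c_\sigma)$ on the backward orbit, and $p_J(\partial I_{\hat\delta})=p_J(c_\sigma)$ follows immediately from the minimality of $\ell$ alone.
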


\begin{proof}
First, let us assume that $\{c_\sigma, T(c_\sigma), \ldots, T^{\ell - \delta_{1/2}(\sigma)}(c_\sigma)\} \cap \{\partial I_\delta\}_{\delta \in \A} = \emptyset$. 

By \eqref{eq:inverse_iterates} and Lemma \ref{lem: local_reflection}, 
$T^{-k}(c_\sigma) \in J$ if and only if $T^{k - \delta_{1/2}}(c_\sigma) \in J$, 
for any $0 \leq k \leq \ell$. Hence, the first visit time of $c_\sigma$ to $J$ 
via $T$ is $\ell - \delta_{1/2}(\sigma)$, which implies $$
 T_J(p_J(c_\sigma)) = T^{\ell - \delta_{1/2}(\sigma)}(c_\sigma).$$
Moreover, $p_J(c_\sigma) = T^{-\ell}(c_\sigma)$ is a fixed point of $\I_J \circ T_J$. Indeed, by \eqref{eq:generalized_conjugacy} and Lemma \ref{lem: local_reflection},
\[\I_J \circ T_J(p_J(c_\sigma)) = \I \circ T \big( T^{\ell - 
\delta_{1/2}(\sigma)}(c_\sigma) \big) = T^{-\ell + \delta_{1/2}(\sigma)}\circ 
T^{-1} \circ \I (c_\sigma) = T^{-\ell + \delta_{1/2}(\sigma)} (c_\sigma) = 
p_J(c_\sigma). \]
Therefore, since $p_J(c_\sigma)$ is a fixed point of $\I_J \circ T_J$ and $T_J$ exchanges its continuity intervals symmetrically, $p_J(c_\sigma) = \hat c_\gamma^J,$ for some $\gamma \in \mathcal C.$

Now, let us assume that $\{c_\sigma, T(c_\sigma), \ldots, T^{\ell - \delta_{1/2}(\sigma)}(c_\sigma)\} \cap \{\partial I_\delta\}_{\delta \in \A} \neq \emptyset$. 

Let $0 \leq k \leq \ell - \delta_{1/2}(\sigma)$ be the minimum such that $T^k(c_\sigma) = \partial I _\delta$ for some $\delta \in \A$. By Lemma \ref{lem:symmetric_connection}, $T^{-k - \delta_{1/2}(\sigma)}(c_\sigma) = \partial I_{\hat \delta},$ where $\pi_0(\hat \delta) = \pi_0(\delta) - 1$. Since $-k - \delta_{1/2}(\sigma) \geq -\ell$, it follows that $p_J(\partial I_{\hat \delta}) = p_J(c_\sigma)$ which, by definition of $T_J$ (see Section \ref{sc:induced_IETs} and Lemma \ref{lem: alphabetafetrconnections}), coincides with $\partial I_\gamma^J$, for some $\gamma \in \A_J$. Notice that, in this case, $c_\sigma$ lies in a non-trivial connection of $T$.
\end{proof}

The claim above shows that $p_J$ maps the set $\{c _\sigma \mid \sigma \in \mathcal B\},$ where $$\mathcal{B} := \{\sigma \in \A \cup \{\tfrac{1}{2}\} \mid \sigma \neq \beta \text{ and } c_\sigma \text{ does not belong to any non-trivial connection}\},$$
injectively to the set $\{\hat c_\gamma^J \mid \gamma \in \mathcal C\}.$ 

Indeed, if for $\sigma, \sigma' \in \mathcal B$ we have $p_J(c_\sigma) = \hat c_\gamma^J = p_J(c_\sigma')$ then it follows from the previous claim that $r_J(\hat c_\gamma^J) = 2b_J(c_\sigma) - \delta_{1/2}(\sigma) = 2b_J(c_\sigma') - \delta_{1/2}(\sigma')$. Hence, since all the terms in the previous equality must have the same parity, it follows that either $\sigma = \tfrac{1}{2} = \sigma'$ or $\sigma, \sigma' \in \A$. In the latter case, it follows from the claim that $c_\sigma = T^{r_J( \hat c_\gamma^J)/2}(\hat c_\gamma^J) = c_\sigma'.$

Therefore, Claim \ref{cl:backward_centers} implies that $\# \mathcal C \geq \# \mathcal B$. Since, by Corollary \ref{cor:centers_connections}, any non-trivial connection contains at most one point of the form $\{c_\sigma \mid \sigma \in \A \cup \{\tfrac{1}{2}\} \}$ it follows that $\#\mathcal B \geq d - d'$. Therefore $\#\mathcal C \geq d - d'$, which together with $\#\mathcal C\le\#\mathcal \A_J = d - d' \le\#\mathcal A$, implies $$\# \mathcal C = \#\mathcal B = d - d' = \# \A_J.$$
\end{proof}

\begin{proof}[Proof of Proposition \ref{prop:induced_centers}]
By Proposition \ref{prop:induced_symmetric}, it follows that the maximal continuity intervals of $T_J$, which in the proof of Proposition \ref{prop:induced_symmetric} we denoted by $\{\hat I^J_{\alpha}\}_{\alpha\in\mathcal C}$, coincide with the intervals exchanged by $T_J$, which we denoted by $\{I^J_{\alpha}\}_{\alpha\in\mathcal \A_J}$. 

Therefore, Proposition \ref{prop:induced_centers} follows from Claim \ref{cl:backward_centers} and the fact that $p_J$ maps the set $$\{c _\sigma \mid \sigma \in \A \cup \{\tfrac{1}{2}\},\, \sigma \neq \beta \text{ and } c_\sigma \text{ does not belong to any non-trivial connection}\},$$
injectively to the set $\{\hat c_\gamma^J \mid \gamma \in \mathcal C\} = \{c_\gamma^J \mid \gamma \in \mathcal \A_J\},$ which we showed and the end of the proof of Proposition \ref{prop:induced_symmetric}.
\end{proof}

The following is a direct consequence of Proposition \ref{prop:induced_centers}.

\begin{corollary}\label{cor: emptyconnection}
If a symmetric IET $T$ has a connection that does not contain a point from the set $\{c_{\alpha} \mid \alpha \in \A \cup \tfrac{1}{2}\}$, then $T$ is not ergodic.
\end{corollary}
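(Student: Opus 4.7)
The plan is to establish the contrapositive: assuming $T$ is an ergodic symmetric IET with exactly $d'$ non-trivial connections (where $d := \#\A$), I will show that each such connection contains exactly one point of $\{c_\alpha \mid \alpha \in \A\} \cup \{c_{1/2}\}$. All the technical work has been carried out in Propositions \ref{prop:induced_symmetric} and \ref{prop:induced_centers}, so what remains is essentially a cardinality count.

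First, I would fix a suitable inducing interval via Lemma \ref{lem:sym_int_disjoint}: some $\beta \in \A$ such that $c_\beta$ lies in no non-trivial connection, together with a $\beta$-symmetric subinterval $J \subseteq I$ disjoint from every connection of $T$. Proposition \ref{prop:induced_symmetric} then guarantees that $T_J$ is an ergodic symmetric IET on $\#\A_J = d - d'$ intervals.

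Next, combining both parts of Proposition \ref{prop:induced_centers} with the injectivity argument at the end of the proof of Proposition \ref{prop:induced_symmetric}, the map $c_\sigma \mapsto p_J(c_\sigma) = c_\gamma^J$ yields a bijection between
\[
\mathcal B := \{\sigma \in \A \cup \{\tfrac{1}{2}\} \mid \sigma \neq \beta \text{ and } c_\sigma \text{ lies in no non-trivial connection}\}
\]
and $\{c_\gamma^J \mid \gamma \in \A_J\}$; in particular, $\#\mathcal B = d - d'$. Let $k$ denote the number of centers in $\{c_\alpha \mid \alpha \in \A\} \cup \{c_{1/2}\}$ contained in some non-trivial connection. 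Since there are $d + 1$ centers in total and $c_\beta$ is not among those $k$, we have $\#\mathcal B = (d + 1) - 1 - k = d - k$, forcing $k = d'$. Because Corollary \ref{cor:centers_connections} bounds the number of centers per connection by one, these $d'$ centers must be distributed as exactly one per non-trivial connection, completing the proof of the contrapositive.

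The only subtlety I anticipate is the bijectivity (rather than mere injectivity) of $\mathcal B \leftrightarrow \A_J$: surjectivity requires that every $\sigma \in \mathcal B$ arise as some $\sigma_\gamma$, and this is exactly what Proposition \ref{prop:induced_centers}(2) provides, since having $p_J(c_\sigma) = \partial I_\gamma^J$ would force $c_\sigma$ inside a connection, contradicting $\sigma \in \mathcal B$. Once this is verified, the count $k = d'$ is forced and the corollary follows.
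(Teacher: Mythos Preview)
Your proof is correct and follows the same route the paper intends: the paper's one-line argument (``direct consequence of Proposition \ref{prop:induced_centers}'') unpacks exactly to the cardinality count you carry out, combined with Corollary \ref{cor:centers_connections} to force one center per connection. In fact the equality $\#\mathcal B = d - d'$ is already displayed at the end of the proof of Proposition \ref{prop:induced_symmetric}, so you could cite that directly rather than reassembling the bijection from parts (1) and (2) of Proposition \ref{prop:induced_centers}.
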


The following example illustrates the situation described in the previous corollary.

\begin{example}\label{ex:1}
 Let $\mathcal{A}=\{1,2,3,4\}$, and let $T$ be a symmetric $4$-IET with permutation $\pi_0(i)=i,$\, $1\le i\le 4$ and lengths $|I_i|=\lambda_i>0,$ for $i=1,2,3$, and $|I_4|=2(\lambda_1+\lambda_2)+\lambda_3$. Choose $\lambda = (\lambda_i)_{i \in \A}$ such that $|\lambda|_1 = 1$. Note that $T(\partial I_2)= \lambda_3 + \lambda_4 = 1- (\lambda_1+\lambda_2)$ and is not the middle point of $I_4$. Also note that $T^2(\partial I_2)=\lambda_1+\lambda_2+\lambda _3=\partial I_4<\frac{1}{2}$. Hence $T$ has a connection (of length 2), which does not contain any center point or $\frac{1}{2}$. In this case, we have an invariant set $I_3\cup T(I_3)$, where $T^2(I_3)=I_3$. Thus $T$ is not ergodic. This is represented in the figure below.
\end{example}
\begin{figure}[h]
\centering
\begin{tikzpicture}
 \draw[thick] (0,0) -- (10,0);

 \draw[thick, blue] (0,0) -- (1,0) node[midway, above] {$I_1$};
 \draw[thick, red] (1,0) -- (2.5,0) node[midway, above] {$I_2$};
 \draw[thick, green] (2.5,0) -- (3.75,0) node[midway, above] {$I_3$};
 \draw[thick, orange] (3.75,0) -- (10,0) node[midway, above] {$I_4$};

 \draw (1,0.1) -- (1,-0.1) node[below] {{\pgfmathparse{0.1}\pgfmathresult}};
 \draw (10,0.1) -- (10,-0.1) node[below] {{\pgfmathparse{1}\pgfmathresult}};
 \draw (0,0.1) -- (0,-0.1) node[below] {{\pgfmathparse{0}\pgfmathresult}};
 \foreach \x in {2.5,3.75}
 \draw (\x,0.1) -- (\x,-0.1) node[below] {};

 \draw[thick] (0,-1.5) -- (10,-1.5);

 \draw[thick, orange] (0,-1.5) -- (6.25,-1.5) node[midway, above] {$T(I_4)$};
 \draw[thick, green] (6.25,-1.5) -- (7.5,-1.5) node[midway, above] {$T(I_3)$};
 \draw[thick, red] (7.5,-1.5) -- (9,-1.5) node[midway, above] {$T(I_2)$};
 \draw[thick, blue] (9,-1.5) -- (10,-1.5) node[midway, above] {$T(I_1)$};
 \draw (10,0.1-1.5) -- (10,-0.1-1.5) node[below] {{\pgfmathparse{1}\pgfmathresult}};
 \draw (0,0.1-1.5) -- (0,-0.1-1.5) node[below] {};
 \foreach \x in {6.25,7.5,9}
 \draw (\x,0.1-1.5) -- (\x,-0.1-1.5) node[below] {};

 \draw[thick] (0,-3) -- (10,-3);

 \draw[thick, green] (2.5,-3) -- (3.75,-3) node[midway, above] {$T^2(I_3)$};
 \draw[thick, red] (3.75,-3) -- (5.25,-3) node[midway, above] {$T^2(I_2)$};
 \draw (10,-2.9) -- (10,-3.1) node[below] {{\pgfmathparse{1}\pgfmathresult}};
 \draw (0,-2.9) -- (0,-3.1) node[below] {{\pgfmathparse{0}\pgfmathresult}};
 
 \foreach \x in {2.5,3.75,5.25}
 \draw (\x,-2.9) -- (\x,-3.1) node[below] {};
 \draw[thick, dotted] (2.5,-3) -- (2.5,-0.1);
 \draw[thick, dotted] (3.75,-3) -- (3.75,-0.1);

\end{tikzpicture}
\caption{Plot of the exchanged intervals (and some of their iterates) for the symmetric IET $T$ described in Example \ref{ex:1}. The set $\{\partial I_2, T(\partial I_2), T^2(\partial I_2) = \partial I_4\}$ defines a connection disjoint from the set $\{c_{\alpha} \mid \alpha \in \A \cup \tfrac{1}{2}\}$. By Corollary \ref{cor: emptyconnection}, $T$ is not ergodic.}
\end{figure}
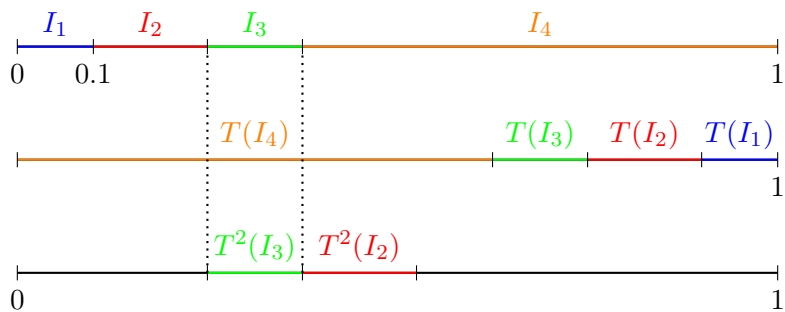

The following corollary is of independent interest.
\begin{corollary}\label{cor: evalue-1}
 Assume that $T$ is an ergodic symmetric IET such that $c_{1/2}$ lies inside a connection. Then $-1$ is an eigenvalue for the Koopman operator associated with $T$. In particular, $T$ is not weak mixing. 
\end{corollary}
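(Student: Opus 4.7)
The plan is to construct explicitly a measurable function $f: I \to \{-1, +1\}$ with $f \circ T = -f$ almost everywhere; this exhibits $-1$ as an eigenvalue of the Koopman operator of $T$ and hence proves that $T$ cannot be weakly mixing. I will build $f$ from the Rokhlin tower decomposition associated with a carefully chosen $\tfrac{1}{2}$-symmetric induction interval around $c_{1/2}$.

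By hypothesis, let $m \geq 0$ be the smallest integer with $T^m(c_{1/2}) = \partial I_\alpha$ for some $\alpha \in \A \setminus \{\pi_0^{-1}(1)\}$; by Lemma \ref{lem:symmetric_connection}, $T^{-m-1}(c_{1/2}) = \partial I_{\hat \alpha}$, so the connection through $c_{1/2}$ has odd length $2m+1$. Using Corollary \ref{cor: existenceofinfiniteorbit}, fix $\beta \in \A \setminus \{\pi_0^{-1}(1)\}$ with $M(\beta) = +\infty$. For each $k \geq 1$, Lemma \ref{lem: sym_endpoints} produces a $\tfrac{1}{2}$-symmetric subinterval $J_k$ with endpoints $T^{-k+1}(\partial I_\beta)$ and $T^k(\partial I_{\hat\beta})$. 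Since the connection through $c_{1/2}$ is finite, I can choose $k$ large enough that $J_k$ meets it only at $c_{1/2}$ itself and is disjoint from every other connection. By Lemma \ref{lem:symmetric_condition}, $T_{J_k}$ satisfies the symmetric identity $\I_{J_k} \circ T_{J_k} = T_{J_k}^{-1} \circ \I_{J_k}$, and since $T^m(c_{1/2}) = \partial I_\alpha$, the midpoint $c_{1/2}$ of $J_k$ is a discontinuity of $T_{J_k}$.

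Consider the Rokhlin tower decomposition $I = \bigsqcup_\gamma \bigsqcup_{i=0}^{h_\gamma - 1} T^i(I_\gamma^{J_k})$ and the first-return-time function $h : J_k \to \N$ constant on each base $I_\gamma^{J_k}$. To produce $f$ it suffices to construct a measurable function $\epsilon : J_k \to \{-1, +1\}$, constant on each base, verifying
\[
\epsilon(T_{J_k}(x)) = \epsilon(x) \cdot (-1)^{h(x)} \quad \text{for a.e.\ } x \in J_k;
\]
indeed, setting $f(T^i x) := \epsilon(x)\,(-1)^i$ on each floor $0 \leq i < h_\gamma$ then yields an eigenfunction satisfying $f \circ T = -f$ almost everywhere. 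The existence of $\epsilon$ amounts to showing that the $\Z/2$-valued cocycle $h \bmod 2$ over $T_{J_k}$ is a measurable coboundary. My plan for verifying this uses two ingredients: (i) the symmetric identity for $T_{J_k}$ pairs the bases via the involution $\I_{J_k}$ and identifies the forward first-return time from any base with the backward first-return time from its $\I_{J_k}$-partner; and (ii) the odd length $2m+1$ of the connection through $c_{1/2}$ produces a precise one-unit parity asymmetry between the dynamics immediately to the left and to the right of the discontinuity $c_{1/2}$ of $T_{J_k}$, exactly the asymmetry needed to force $h \bmod 2$ to be a coboundary.

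The main obstacle I foresee is step (ii): rigorously tying the oddness of the connection length to the coboundary property of $h \bmod 2$. This requires a careful analysis of how the orbit segment along the connection determines the first-return times of bases lying immediately on either side of $c_{1/2}$, and how this parity information propagates to all remaining bases through the symmetric pairing from (i) and the ergodicity of $T_{J_k}$.
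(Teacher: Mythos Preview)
Your approach has a genuine gap at precisely the point you flag as the ``main obstacle,'' and the paper's argument shows that this obstacle is self-inflicted: it disappears if you choose the induction interval differently.

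The paper induces not on a $\tfrac{1}{2}$-symmetric interval but on a $\beta$-symmetric interval $J$ with $\beta \in \A$, chosen (via Lemma \ref{lem:sym_int_disjoint}) to be \emph{disjoint from every connection}. Proposition \ref{prop:induced_centers} then applies directly and says that each center $c_\gamma^J$ equals $p_J(c_\sigma)$ for some $\sigma \in \A \cup \{\tfrac{1}{2}\}$ with $c_\sigma$ \emph{not} lying on any connection, and that the corresponding tower height is $h_\gamma = 2\,b_J(c_\sigma) - \delta_{1/2}(\sigma)$. The hypothesis that $c_{1/2}$ \emph{does} lie on a connection therefore excludes $\sigma = \tfrac{1}{2}$ for every $\gamma$, so $\delta_{1/2}(\sigma)=0$ and \emph{every} tower height is even. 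The eigenfunction is then obtained by assigning $+1$ and $-1$ to alternating floors, with no coboundary analysis needed at all.

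By contrast, your choice of a $\tfrac{1}{2}$-symmetric $J_k$ forces $c_{1/2} \in J_k$, so $J_k$ is never disjoint from the connections and you cannot invoke the second part of Proposition \ref{prop:induced_symmetric} or Proposition \ref{prop:induced_centers}. You are then left trying to prove that the parity cocycle $h \bmod 2$ is a coboundary by hand, using the odd length of the connection and a symmetric pairing of bases --- a programme you correctly identify as incomplete. Even if it can be carried out, it is substantially harder than the paper's route, and your sketch of step (ii) does not yet contain the mechanism by which the single parity defect at $c_{1/2}$ propagates to give a global coboundary. The fix is simply to move the induction interval away from $c_{1/2}$: induce around some $c_\beta$ with $\beta \in \A$ not on a connection, and the parity question becomes trivial.
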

\begin{proof}
Let $J \subseteq I$ as in Proposition \ref{prop:induced_symmetric} such that $J$ does not contain any point from any connection. Such an interval exists by Lemma \ref{lem:sym_int_disjoint}. Then, by Propositions \ref{prop:induced_symmetric} and \ref{prop:induced_centers}, $T_J$ is symmetric and the middle points $\{c_\gamma^J\}_{\gamma \in \A_J}$ of the exchanged intervals $\{I_\gamma^J\}_{\gamma \in \A_J}$ are preimages of the middle points of the intervals exchanged by $T_J$. 
Moreover, again by Proposition \ref{prop:induced_centers}, the Rokhlin towers associated to $T_J$ are all of even height since for any $\gamma \in \A_J$ there exists $\sigma \in \A$ such that $c_\gamma^J = p_J(c_\sigma)$ and $T_J(c_\gamma^J) = T^{2b_J(c_\sigma)}(c_\gamma^J)$, where $b_J$ is given by \eqref{eq:backward_return}.

By defining a function $f$ that equals $1$ (resp. $-1$) on the odd (resp. even) levels of each Rokhlin tower, we get an eigenfunction of $T$ with eigenvalue $-1$, that is, such that $f \circ T = -f$.
\end{proof}

\begin{example}\label{ex:2}
 Let $\mathcal{A}=\{1,2,3,4\}$, and let $T$ be a symmetric $4$-IET with permutation $\pi_0(i)=i,$\, $1\le i\le 4$ and lengths $|I_i|=\lambda_i>0,$ for $i=1,2,3$, and $|I_4|=\frac{1}{2}+\lambda_1+\lambda_2$, where $2(\lambda_1+\lambda_2)+\lambda_3=\frac{1}{2}$ and $\lambda_2>\lambda_1+\lambda_3$. With this configuration, we have that $T(\partial I_2)=1-\lambda_1-\lambda_2$, $T^2(\partial I_2)=\frac{1}{2}$, and $T^3(\partial I_2)=\partial I_3$. Thus we have a connection containing $\frac{1}{2}$. 
 
 By inducing on the interval $J=I_2=[\partial I_2, \partial I_3)$, 
 we obtain a symmetric 3-IET, with initial order of $J_1, J_2, J_3$ 
 and the parameters as follows: $|J_1|=\lambda_3, |J_2|=\lambda_1, 
 |J_3|=\lambda_2-\lambda_1-\lambda_3>0$. 
 
As shown in the figure below, we can check that $T^4(J_3)\subseteq I_2$, 
 $T^{3}(J_1)=I_3$, $T^3(I_3)\subseteq I_2$, $T^{4}(J_2)=I_1$, $T^4(I_1)\subseteq 
 I_2$. Thus the first return times are $r_J(J_1)=6, r_J(J_2)=8, r_J(J_3)=4$. 
 So all the towers are of even heights, and we can give value $1$ to the 
 odd levels and $-1$ to the even levels of each tower, which gives us an 
 eigenfunction of eigenvalue -1. To guarantee the ergodicity of the IETs, it 
 suffices to ask that $\lambda_2$ and $\lambda_3$ are rationally 
 independent. {Indeed, then the induced map is a 3-IET, whose image after a 
 single step of the classical Rauzy-Veech induction yields an irrational 
 rotation.}

\end{example}

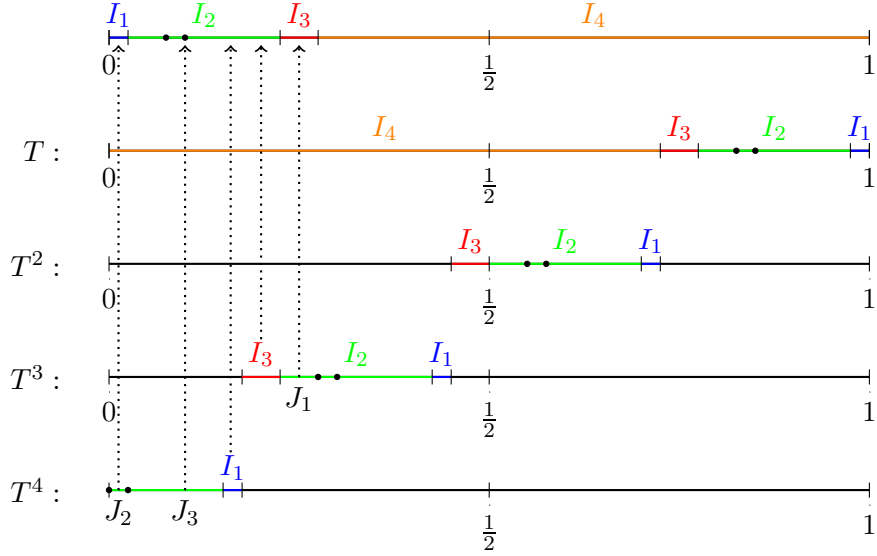
\begin{figure}[h]
 \centering
 \begin{tikzpicture}
 \draw[thick] (0,0) -- (10,0);

 \draw[thick, blue] (0,0) -- (0.25,0) node[midway, above] {$I_1$};
 \draw[thick, green] (0.25,0) -- (2.25,0) node[midway, above] {$I_2$};
 \draw[thick, red] (2.25,0) -- (2.75,0) node[midway, above] {$I_3$};
 \draw[thick, orange] (2.75,0) -- (10,0) node[midway, above] {$I_4$};

 \filldraw (0.75,0) circle (1pt);
 \filldraw (2.0-1,0) circle (1pt);
 
 \foreach \x in {0, 0.25, 2.25, 2.75, 10}
 \draw (\x,0.1) -- (\x,-0.1) node[below] {};
 \draw (5,0.1) -- (5,-0.1) node[below] {$\frac{1}{2}$};
 \draw (10,0.1) -- (10,-0.1) node[below] {1};
 \draw (0,0.1) -- (0,-0.1) node[below] {0};

 \node[left] at (-0.5, -1.5) {\( T: \)};
 \draw[thick] (0,-1.5) -- (10,-1.5);
 \draw[thick, orange] (0, -1.5) -- (7.25, -1.5) node[midway, above] {$I_4$};
 \draw[thick, red] (7.25, -1.5) -- (7.75, -1.5) node[midway, above] {$I_3$};
 \draw[thick, green] (7.75, -1.5) -- (9.75, -1.5) node[midway, above] {$I_2$};
 \draw[thick, blue] (9.75, -1.5) -- (10, -1.5) node[midway, above] {$I_1$};
 \foreach \x in {0, 7.25, 7.75, 9.75, 10}
 \draw (\x, -1.4) -- (\x, -1.6) node[below] {};
 \draw (5, -1.4) -- (5, -1.6) node[below] {$\frac{1}{2}$};
 \draw (10, -1.4) -- (10, -1.6) node[below] {1};
 \draw (0, -1.4) -- (0, -1.6) node[below] {0};

 \filldraw (0.75+7.5,-1.5)circle (1pt);
 \filldraw (2.0+7.5-1,-1.5) circle (1pt);

 \node[left] at (-0.5, -3) {\( T^2: \)};
 \draw[thick] (0,-3) -- (10,-3);
 \draw[thick, green] (5, -3) -- (7, -3) node[midway, above] {$ I_2$};
 \draw[thick, blue] (7, -3) -- (7.25, -3) node[midway, above] {$I_1$};
 \draw[thick, red] (4.5, -3) -- (5, -3) node[midway, above] {$I_3$};

 \foreach \x in {0, 4.5, 5, 7, 7.25, 10}
 \draw (\x, -2.9) -- (\x, -3.1) node[below] {};
 \draw (5, -3.2) -- (5, -3.2) node[below] {$\frac{1}{2}$};
 \draw (10, -3.2) -- (10, -3.2) node[below] {1};
 \draw (0, -3.2) -- (0, -3.2) node[below] {0};
 
 \filldraw (0.75+4.75,-3) circle (1pt);
 \filldraw (2.0+4.75-1,-3) circle (1pt);
 
 \node[left] at (-0.5, -4.5) {\( T^3:\)};
 \draw[thick] (0,-4.5) -- (10,-4.5);
 \draw[thick, green] (2.25, -4.5) -- (4.25, -4.5) node[midway, above] {$I_2$};
 \draw[thick, blue] (4.25, -4.5) -- (4.5, -4.5) node[midway, above] {$I_1$} ;
 \filldraw (0.75+2,-4.5) circle (1pt);
 \filldraw (2.0+2-1,-4.5) circle (1pt);

 \foreach \x in {0, 1.75, 2.25, 4.25,4.5, 5, 10}
 \draw (\x, -4.6) -- (\x, -4.4) node[below] {};
 \draw[thick, red] (1.75, -4.5) -- (2.25, -4.5) node[midway, above] {$I_3$};
 \draw (5, -4.7) -- (5, -4.7) node[below] {$\frac{1}{2}$};
 \draw (10, -4.7) -- (10, -4.7) node[below] {1};
 \draw (0, -4.7) -- (0, -4.7) node[below] {0};
 \draw[](2.5, -4.5) -- (2.5, -4.5) node[below] {$J_1$};

 \node[left] at (-0.5, -6) {\( T^4: \)};
 \draw[thick] (0,-6) -- (10,-6);
 \draw[thick, green] (0,-6) -- (1.5,-6);
 \draw[](0.125, -6) -- (0.125, -6) node[below] {$J_2$};
 \draw[thick, blue] (1.5,-6) -- (1.75,-6) node[midway,above] {$I_1$} ;
 \foreach \x in {0, 1.5, 5, 1.75, 10}
 \draw (\x, -5.9) -- (\x, -6.1) node[below] {};
 \draw (5, -6.2) -- (5, -6.2) node[below] {$\frac{1}{2}$};
 \draw (10, -6.2) -- (10, -6.2) node[below] {1};
 \filldraw (0,-6) circle (1pt);
 \filldraw (1.25-1,-6) circle (1pt);
 \draw[](1, -6) -- (1, -6) node[below] {$J_3$};

 \draw[thick, dotted, ->] (2.5,-4.5) -- (2.5,-0.1);
 \draw[thick, dotted, ->] (2,-4) -- (2,-0.1);

 \draw[thick, dotted, ->] (0.125,-6) -- (0.125,-0.1);
 \draw[thick, dotted, ->] (1,-6) -- (1,-0.1);
 \draw[thick, dotted, ->] (1.6,-5.5) -- (1.6,-0.1);

 \end{tikzpicture}
\caption{Plot of the exchanged intervals (and some of their iterates) for the symmetric IET $T$ described in Example \ref{ex:2}. Every Rohlin tower in the decomposition associated with the induced map $T_J$ (see \eqref{eq:towers_decomposition}) is of even height.}
 \label{}
\end{figure}

\section{The essential values criterion}
In this section, we recall and state the standard notion of essential value, which is a classical tool to study skew products' ergodicity. Let $(X,\mathcal B, \mu)$ be a standard probability space. Let $T:X\to X$ be $\mu$-measure preserving automorphism and let $f:X\to \R^m$, where $m \geq 1$. Consider the skew product $T_f$ on $X\times \R^n$ given by
\[
T_f(x,r)=(Tx,r+f(x)).
\]
We say that $a\in\R^m$ is an \emph{essential value} of $T_f$ if for every $\epsilon>0$ and every measurable subset $E\subseteq X$ with $\mu(E)>0$, there exists $n\in\N$ such that 
\[
\mu\{x\in E\mid T^{n}(x)\in E\text{ and }|S_n f(x)-a|<\epsilon\}>0,
\]
where $S_n f (x)$ denotes the $n$-th Birkhoff sum of $f$ evaluated at $x$, which is given by
\begin{equation}
	\label{eq:Birkhoff_sums}
	S_n f (x):=\begin{cases}
		\sum_{i=0}^{n-1}f(T^{i}(x))& \text{ if }n\ge 1,\\
		0&\text{ if }n=0,\\
		-\sum_{i=-n}^{-1}f(T^{i}(x))&\text{ if }n\le 1.
	\end{cases}
\end{equation}
We denote the set of essential values of $T_f$ by $Ess(T_f)$.

The following classical fact links this notion to the ergodicity of the skew product.
\begin{theorem}\label{thm: essvalues}
 With the notation as above, the set $Ess(T_f)$ is a closed subgroup of $\R^m$. Moreover, $T_f$ is ergodic w.r.t. $\mu\otimes Leb_{\R^m}$ if and only if $T$ is ergodic and $Ess(T_f)=\R^m$.
\end{theorem}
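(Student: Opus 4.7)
The plan is to prove the theorem in three stages: closedness of $Ess(T_f)$, group structure, and the ergodicity characterization. Closedness is immediate from the definition: if $(a_k)_k \subseteq Ess(T_f)$ converges to $a \in \R^m$, then given $\epsilon > 0$ and $E \subseteq X$ with $\mu(E) > 0$, pick $k$ with $|a_k - a| < \epsilon/2$ and apply the essential value condition for $a_k$ with tolerance $\epsilon/2$; the triangle inequality gives the condition for $a$.

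For the group structure, $0 \in Ess(T_f)$ follows from Poincaré recurrence applied to $T$ (find returns of $E$ to itself, then decompose $E$ into small level sets of the Birkhoff sums to force a positive-measure subset where $|S_n f|$ is small). Closure under negation is obtained by rewriting the essential-value condition using $T^{-1}$ and noting that $S_{-n} f(T^n x) = -S_n f(x)$. Closure under addition is the key algebraic step: given $a, b \in Ess(T_f)$, $E$ with $\mu(E) > 0$, and $\epsilon > 0$, first use the essential-value property for $a$ with tolerance $\epsilon/2$ to find $n_1$ and a positive-measure subset $E_1 \subseteq \{x \in E \mid T^{n_1}(x) \in E,\ |S_{n_1} f(x) - a| < \epsilon/2\}$; then apply the essential-value property for $b$ to the positive-measure set $T^{n_1}(E_1) \cap E$, yielding $n_2$ witnessing $b$ on a positive-measure subset. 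The composition $n_1 + n_2$ then witnesses $a + b$ on a positive-measure subset of $E$ by the cocycle identity $S_{n_1 + n_2} f(x) = S_{n_1} f(x) + S_{n_2} f(T^{n_1} x)$.

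The forward direction of the ergodicity characterization is easy: if $T_f$ is ergodic and $A \subseteq X$ is $T$-invariant then $A \times \R^m$ is $T_f$-invariant, forcing $T$ ergodic; and if $Ess(T_f)$ were a proper closed subgroup $H \subsetneq \R^m$, one could pull back a non-constant $H$-invariant measurable function on $\R^m$ along the second coordinate to produce a non-constant $T_f$-invariant function.

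The main obstacle is the converse: assuming $T$ ergodic and $Ess(T_f) = \R^m$, we must show $T_f$ is ergodic. I would proceed by spectral analysis on the fibers, exploiting the $\R^m$-action by translation on the second factor, which commutes with $T_f$. Take $F \in L^\infty(X \times \R^m)$ that is $T_f$-invariant and decompose it fiberwise via the Fourier transform, $F(x, r) = \int_{\R^m} \widehat{F}(x, t)\, e^{i\langle t, r\rangle}\, dt$ (interpreted distributionally if needed). Invariance of $F$ becomes the multiplicative cocycle identity
\[
\widehat{F}(Tx, t) = e^{i \langle t, f(x)\rangle}\, \widehat{F}(x, t) \quad \text{for a.e. } x.
\]
For fixed $t \neq 0$, if $\widehat{F}(\cdot, t) \not\equiv 0$, then $|\widehat{F}(\cdot, t)|$ is $T$-invariant and hence constant by ergodicity of $T$, producing a measurable solution to $\phi(Tx) = e^{i\langle t, f(x) \rangle} \phi(x)$ with $|\phi| = 1$. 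A standard argument then shows $\langle t, a\rangle \in 2\pi \Z$ for every $a \in Ess(T_f)$: indeed, choosing $x$ in a set where $|S_n f - a|$ is small and returning close to itself under $T^n$, we get $\phi(x) \approx e^{i\langle t, a\rangle} \phi(x)$, forcing $e^{i\langle t, a\rangle} = 1$. Since $Ess(T_f) = \R^m$, this forces $t = 0$. Therefore $\widehat{F}$ is supported on $\{t = 0\}$, meaning $F$ depends only on $x$, and $T$-ergodicity then yields that $F$ is constant.
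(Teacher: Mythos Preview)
The paper does not prove this theorem; it is stated as a classical fact (standard references are Schmidt, \emph{Cocycles on Ergodic Transformation Groups}, or Aaronson, \emph{An Introduction to Infinite Ergodic Theory}). So there is no proof in the paper to compare against, and I will evaluate your argument on its own merits.

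Your treatment of closedness and of closure under addition is correct. The argument for $0\in Ess(T_f)$ is vague as written; note however that in the paper's conventions $\N$ includes $0$ (the symbol $\N_+$ is used elsewhere for positive integers), so $n=0$ gives $0\in Ess(T_f)$ trivially. The Fourier/eigenfunction argument for the converse direction is essentially the right idea, though the fiberwise Fourier transform of a general $L^\infty$ function is only distributional, so one cannot speak of $\widehat F(\cdot,t)$ for a fixed $t$ without further work (one usually bypasses this by proving directly that every $T_f$-invariant set is invariant under all fiber translations $(x,r)\mapsto(x,r+a)$ with $a\in Ess(T_f)$, which together with $Ess(T_f)=\R^m$ forces the set to be of the form $A\times\R^m$).

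There is, however, a genuine gap in your forward direction. You claim that if $H:=Ess(T_f)\subsetneq\R^m$, then pulling back a non-constant $H$-invariant function $g$ on $\R^m$ via $G(x,r)=g(r)$ yields a non-constant $T_f$-invariant function. This is false: $G\circ T_f(x,r)=g(r+f(x))$, and $g(r+f(x))=g(r)$ would require $f(x)\in H$ for a.e.\ $x$, which nothing in the hypotheses guarantees. The correct argument goes in the opposite direction: if $a\notin Ess(T_f)$, the definition provides $\epsilon>0$ and $E\subseteq X$ of positive measure such that, for every $n$, the set $\{x\in E\mid T^n x\in E,\ |S_nf(x)-a|<\epsilon\}$ is null. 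One then checks that the $T_f$-saturation $\bigcup_{n\in\Z}T_f^n\big(E\times B_{\epsilon/2}(0)\big)$ is a $T_f$-invariant set disjoint (mod null) from $E\times B_{\epsilon/2}(a)$, hence nontrivial. This produces the contradiction with ergodicity of $T_f$. Equivalently, one proves the characterization ``$a\in Ess(T_f)$ iff every $T_f$-invariant set is invariant under the fiber translation by $a$,'' from which both the group property and the ergodicity criterion follow immediately.
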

We will use a simplified version of this criterion, as introduced by Conze and Fr\k{a}czek in 
\cite{conze_cocycles_2011}.
\begin{theorem}[Lemma 2.7 in \cite{conze_cocycles_2011}]\label{thm: FUcriterion}
 Let $a\in\R^m$. Assume that for every $\epsilon>0$ there exists a sequence 
 of subsets 
 $\{\Xi_n\}_{n\in\N}$ and an increasing sequence 
 $\{q_n\}_{n\in\N}$ of natural numbers such that 
 \begin{enumerate}
 \item \label{cond:measure} $\liminf_{n\to\infty}\mu(\Xi_n)>0$,
 \item \label{cond:rigidity}$\lim_{n\to\infty}\sup_{x\in \Xi_n}|T^{q_n}(x)-x|=0$,
 \item \label{cond:almost_invariant} $\lim_{n\to\infty}\mu(\Xi_n\triangle T(\Xi_n))=0$,
 \item \label{cond:BS} $|S_{q_n}f(x)-a|<\epsilon$.
 \end{enumerate}
 Then $a\in Ess(T_f)$. 
\end{theorem}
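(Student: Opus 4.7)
The plan is to unwind the definition of essential value directly. Fix a measurable $E\subseteq X$ with $\mu(E)>0$ and $\epsilon>0$; apply the hypothesis with $\epsilon$ to obtain sequences $\{\Xi_n\}$ and $\{q_n\}$ satisfying (\ref{cond:measure})--(\ref{cond:BS}). Condition (\ref{cond:BS}) forces $|S_{q_n}f(x)-a|<\epsilon$ for every $x\in\Xi_n$, so it is enough to exhibit some $n$ with
\[\mu\big(\Xi_n\cap E\cap T^{-q_n}E\big)>0.\]

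The strategy is to bound this measure from below via
\[\mu\big(\Xi_n\cap E\cap T^{-q_n}E\big)=\mu(\Xi_n\cap E)-\mu\big(T^{q_n}(\Xi_n\cap E)\setminus E\big),\]
where the second term has been rewritten using $T$-invariance of $\mu$ and the invertibility of $T$. I would then prove (A) $\mu(\Xi_n\cap E)\to\mu(E)$ and (B) $\mu\big(T^{q_n}(\Xi_n\cap E)\setminus E\big)\to 0$, so that the right-hand side is eventually close to $\mu(E)>0$.

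For (B), condition (\ref{cond:rigidity}) gives $\eta_n:=\sup_{x\in\Xi_n}|T^{q_n}x-x|\to 0$, so $T^{q_n}(\Xi_n\cap E)$ is contained in the $\eta_n$-neighbourhood of $E$; approximating $E$ up to arbitrarily small $\mu$-measure by a finite union of intervals then yields that the measure of points at distance less than $\eta_n$ from $E$ but outside $E$ tends to $0$. For (A), condition (\ref{cond:almost_invariant}) gives $\|U_T\mathbb{1}_{\Xi_n}-\mathbb{1}_{\Xi_n}\|_{L^2}^2=\mu(\Xi_n\triangle T\Xi_n)\to 0$, so the indicators are asymptotically Koopman-invariant in $L^2$; invoking ergodicity of $T$ (implicit in applications of this criterion and, by Theorem \ref{thm: essvalues}, necessary for $T_f$ to be ergodic) then forces $\mu(\Xi_n)(1-\mu(\Xi_n))\to 0$, which together with condition (\ref{cond:measure}) yields $\mu(\Xi_n)\to 1$ and hence (A).

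The main obstacle is step (B): it is the one place that converts the uniform $C^0$-rigidity of condition (\ref{cond:rigidity}) into a measure-theoretic estimate, and it relies on outer regularity of $\mu$, which is straightforward for Lebesgue measure on an interval (the setting relevant to IETs) but would be delicate in more singular settings. Once (A) and (B) are in place, the right-hand side of the displayed decomposition tends to $\mu(E)>0$, so the required intersection has positive measure for all sufficiently large $n$, proving $a\in Ess(T_f)$.
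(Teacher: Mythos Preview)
Your approach is different from the paper's, which does not prove the criterion from scratch but rather derives it from Lemma~2.7 in \cite{conze_cocycles_2011}: that lemma says the topological support of any weak limit of the distributions $(S_{q_n}f)_*(\mu|_{\Xi_n})/\mu(\Xi_n)$ is contained in $Ess(T_f)$, and condition~(4) then pins this support near $a$, so letting $\epsilon\to 0$ and using that $Ess(T_f)$ is closed gives the conclusion. Your direct verification of the definition of essential value is a reasonable alternative route, and step~(B) is correct once you pass through a finite-union-of-intervals approximation of $E$ as you indicate.

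Step~(A), however, contains a genuine error. The implication ``$\mu(\Xi_n\triangle T\Xi_n)\to 0$ and $T$ ergodic $\Rightarrow$ $\mu(\Xi_n)(1-\mu(\Xi_n))\to 0$'' is false. In any aperiodic system Rokhlin's lemma produces, for each $N$, a tower $\bigsqcup_{i=0}^{N-1}T^iB_N$ of total measure larger than $1-\tfrac{1}{N}$; taking $\Xi_N=\bigsqcup_{i=0}^{\lfloor N/2\rfloor-1}T^iB_N$ gives $\mu(\Xi_N)\to\tfrac12$ while $\mu(\Xi_N\triangle T\Xi_N)\le 2\mu(B_N)\to 0$. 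In fact the very towers $\Xi_n$ constructed later in this paper have measure at most $\tfrac12$. What \emph{is} true, and suffices for your purposes, is asymptotic independence: for $T$ ergodic, $\mu(\Xi_n\triangle T\Xi_n)\to 0$ forces $\mu(\Xi_n\cap E)-\mu(\Xi_n)\mu(E)\to 0$ for every fixed $E$ (compare $\chi_{\Xi_n}$ with its ergodic averages $\tfrac1K\sum_{k=0}^{K-1}\chi_{T^{-k}\Xi_n}$, use $\mu(\Xi_n\triangle T^{-k}\Xi_n)\to 0$ for each fixed $k$, and let $K\to\infty$ after $n\to\infty$). Combined with condition~(1) this yields $\liminf_n\mu(\Xi_n\cap E)\ge\mu(E)\cdot\liminf_n\mu(\Xi_n)>0$, which together with~(B) is enough to make your decomposition positive for large $n$.
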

{The Lemma 2.7 in \cite{conze_cocycles_2011} actually states that 
the 
topological support of the limit distribution 
$P:=\lim_{n\to\infty}\frac{1}{\mu(\Xi_n)}(S_{q_n}f(x)|_{\Xi_n})_*\mu|_{\Xi_n}$, 
which exists up 
to taking a subsequence due to tightness guaranteed by Condition 
\eqref{cond:BS}, is contained in $Ess(T_f)$. However, again by \eqref{cond:BS}, 
the topological support of $P$ is contained in $[a-\epsilon,a+\epsilon]$. By 
passing with $\epsilon$ to 0 and by the fact that $Ess(T_f)$ is a closed subset 
of $\R^m$, we get that $a\in Ess(T_f)$.}

We now provide a version of the above criterion, that is going to be effective for our purposes.

\begin{proposition}\label{prop: effcriterion}
 Let $T:I\to I$ be an ergodic IET and let $m\in \N_+$. Assume that the function $f:I^{\times m}\to \R^{m}$ satisfies the following: 
 \begin{enumerate}[(i)]
 \item \label{cond:continuity} $f$ is of the form $(x_1,\ldots,x_m)\mapsto (f_1(x_1),\ldots,f_m(x_m))$ with each $f_j$ being continuous over exchanged intervals,
 \item \label{cond:sequences} there exist sequences $\{\Xi_n\}_{n\in\N}$ and $\{q_n\}_{n\in\N}$ satisfying \eqref{cond:measure}-\eqref{cond:almost_invariant} in Theorem \ref{thm: FUcriterion}, with the additional assumption that, for any $n \in \N$, $\Xi_n=\bigsqcup_{i=0}^{h_n-1}T^{i}({I_n})$ is a Rokhlin towers of $h_n\le q_n$ intervals with $|I_n|\le\frac{1}{q_n}$ and $\sup_{x\in \Xi_n}|T^{q_n}(x)-x|\le D/q_n$ for some $D>1$,
 \item \label{cond:cont_interval} for every $x\in\Xi_n$, the interval $[x,T^{q_n}(x)]$ (or $[T^{q_n}(x),x]$) is a continuity interval of $f_j$, for every $j\in\{1,\ldots,m\}$,
 \item \label{cond:derivative} there exists $C>1$ such that
 \[
 |S_{q_n}f_j(x)|\le C, \qquad C^{-1}q_n\le S_{q_n}f_j'(x)\le Cq_n,
 \]
 for any $j=1,\ldots,m$ and any $x\in\Xi_n$, and 
 \[
 |f_j'(x)|\le C,
 \]
 for any $x\in I$.
 \end{enumerate}
 If additionally $T^{\otimes m}$ is ergodic, then so is $T_f^{\otimes m}$.
\end{proposition}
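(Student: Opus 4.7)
The plan is to show $\operatorname{Ess}(T_f^{\otimes m})=\R^m$ and conclude by Theorem \ref{thm: essvalues}, using the assumption that $T^{\otimes m}$ is ergodic. The approach is to apply the sharper form of the FU criterion from the remark following Theorem \ref{thm: FUcriterion} to $(T^{\otimes m}_f,\Xi_n^{\otimes m},q_n)$, with $\Xi_n^{\otimes m}\subseteq I^{\otimes m}$ the product Rokhlin towers. Conditions \eqref{cond:measure}--\eqref{cond:almost_invariant} are inherited from those for $(T_f,\Xi_n,q_n)$: $\mu^{\otimes m}(\Xi_n^{\otimes m})=\mu(\Xi_n)^m$ stays bounded below, the displacement under $(T^{\otimes m})^{q_n}$ is controlled coordinatewise by $D/q_n\to 0$, and
\[
\mu^{\otimes m}\bigl(\Xi_n^{\otimes m}\triangle T^{\otimes m}(\Xi_n^{\otimes m})\bigr)\leq m\,\mu\bigl(\Xi_n\triangle T(\Xi_n)\bigr)\to 0.
\]
Since $T^{\otimes m}$ acts componentwise and $f$ has the product form of \eqref{cond:continuity}, the normalized pushforward $\sigma_n$ of $\mu^{\otimes m}|_{\Xi_n^{\otimes m}}$ under $S_{q_n}^{T^{\otimes m}}f$ factorizes as $\sigma_n=\nu_{1,n}\otimes\cdots\otimes\nu_{m,n}$, where $\nu_{j,n}$ denotes the analogous normalized pushforward of $\mu|_{\Xi_n}$ under $S_{q_n}f_j$.

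The sharper FU criterion then yields $\operatorname{supp}(\sigma)\subseteq\operatorname{Ess}(T_f^{\otimes m})$ for every weak-$*$ limit $\sigma$ of $(\sigma_n)$. Each $\nu_{j,n}$ is supported in $[-C,C]$ by \eqref{cond:derivative}, so by tightness and a diagonal argument I may assume $\nu_{j,n}\to\nu_j$ weakly for every $j$, and $\sigma=\nu_1\otimes\cdots\otimes\nu_m$. The crux of the argument is to show that $\operatorname{supp}(\nu_j)$ has positive Lebesgue measure. Combining \eqref{cond:cont_interval} with $|I_n|\leq 1/q_n<D/q_n$ from \eqref{cond:sequences}, each $f_j$ is continuous on every level $T^i(I_n)$ of the Rokhlin tower, and together with the derivative bound in \eqref{cond:derivative}, $S_{q_n}f_j$ is strictly monotone and bi-Lipschitz on each of its continuity pieces in $\Xi_n$, with derivative in $[q_n/C,Cq_n]$. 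A change-of-variables computation then gives that $\nu_{j,n}$ is absolutely continuous with density at $u$ bounded by $\tfrac{C\,N_n(u)}{q_n\mu(\Xi_n)}$, where $N_n(u)$ counts the monotone pieces of $S_{q_n}f_j|_{\Xi_n}$ whose image contains $u$. Counting discontinuities level by level using the Rokhlin tower structure and the hypothesis $h_n\leq q_n$, I would show that $N_n(u)=O(q_n)$ uniformly in $n$ and $u$, so that the densities of $\nu_{j,n}$ are uniformly bounded. Hence the weak limit $\nu_j$ is absolutely continuous with bounded density, and $\operatorname{supp}(\nu_j)$ has positive Lebesgue measure.

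Consequently $\operatorname{supp}(\sigma)=\operatorname{supp}(\nu_1)\times\cdots\times\operatorname{supp}(\nu_m)$ has positive $m$-dimensional Lebesgue measure and lies in the closed subgroup $\operatorname{Ess}(T_f^{\otimes m})\leq\R^m$. By Steinhaus' theorem, the difference set of a set of positive Lebesgue measure contains an open neighborhood of the origin, hence $\operatorname{Ess}(T_f^{\otimes m})=\R^m$. Together with the ergodicity of $T^{\otimes m}$, Theorem \ref{thm: essvalues} then yields the ergodicity of $T_f^{\otimes m}$. The main obstacle will be the uniform density bound on $\nu_{j,n}$, which reduces to counting monotone continuity pieces of $S_{q_n}f_j$ across the levels of the Rokhlin tower; here hypothesis \eqref{cond:cont_interval} and the constraint $h_n\leq q_n$ from \eqref{cond:sequences} must be combined carefully, since without the continuity control on each tower level the number of pieces could a priori grow like $q_n^2$ rather than $q_n$.
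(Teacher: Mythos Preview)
Your approach is correct and takes a genuinely different route from the paper. The paper argues via Theorem~\ref{thm: FUcriterion} directly: since each image $S_{q_n}f(\Xi_n)$ is contained in $[-2C,2C]$ and (by the lower derivative bound on the base $I_n$) has Lebesgue measure at least $1/(CE)$, a subsequence has nonempty common intersection; picking $y$ there and $x_n\in\Xi_n$ with $S_{q_n}f(x_n)=y$, the paper builds an explicit subtower $\Xi_n^y\subset\Xi_n$ of width $\sim\epsilon/q_n$ and height $\sim\epsilon h_n$ on which $|S_{q_n}f-y|<\epsilon$, using the mean value theorem and condition~(iii) to compare Birkhoff sums across levels. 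This produces an interval of essential values in the $m=1$ case; for $m\ge 2$ the paper takes products of the subtowers found coordinatewise. Your argument instead passes through the distributional form of the criterion, establishes uniform absolute continuity of the factor measures $\nu_{j,n}$, and invokes Steinhaus. This handles all $m$ at once and avoids the explicit subtower construction, at the price of a slightly more abstract endgame.

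Your worry about $N_n(u)$ is misplaced and the role you assign to~(iii) is not quite right. Hypothesis~(iv) asserts that $S_{q_n}f_j'(x)$ exists and lies in $[q_n/C,\,Cq_n]$ for \emph{every} $x\in\Xi_n$; read as the derivative of the Birkhoff sum, this already forces $S_{q_n}f_j$ to be differentiable, hence continuous and strictly increasing, on each of the $h_n$ levels. So there are exactly $h_n$ monotone pieces, $N_n(u)\le h_n$ trivially, and the density of $\nu_{j,n}$ is bounded by
\[
\frac{Ch_n}{q_n\,\mu(\Xi_n)}=\frac{C}{q_n|I_n|}\le \frac{C}{\mu(\Xi_n)},
\]
which is uniformly bounded by condition~\eqref{cond:measure}. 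No level-by-level discontinuity count is needed, and there is no risk of $q_n^2$ pieces. Condition~(iii) is not what gives continuity of $S_{q_n}f_j$ on each level (it only controls $f_j$ on the short segment $[x,T^{q_n}(x)]$, which need not cover the level); in the paper's argument (iii) is used for a different purpose, namely to ensure the mean value theorem applies when comparing $S_jf(T^{q_n-j}(x))$ with $S_jf(x_n)$ across levels of the subtower $\Xi_n^y$.
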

\begin{proof}
 In view of Theorem \ref{thm: essvalues}, it is enough to show that we can 
 obtain an $m$-dimensional cube of essential values. We will prove the 
 proposition for $m=1$ by realizing the essential values through Roklhin 
 towers, which are subsets of $\Xi_n$. For $m\ge 2$ we first obtain the 
 same result for every $j=1,\ldots,m$, i.e. we find an interval $(a_j,b_j)$ 
 of essential values, realized through Rokhlin towers inside $\Xi_n$. Hence, 
 the set $(a_1,b_1)\times\ldots\times(a_m,b_m)$ is the set of essential 
 values realized via sequences of Rokhlin towers inside 
 $\bigsqcup_{i=0}^{h_n-1}(T^{\otimes m})^i(I_n^{\times m})$. 

 That being said, we assume from now on that $m=1$. Note that due to our assumption on $\Xi_n$ and the derivative of $f$, each of the sets $S_{q_n}f(\Xi_n)\subseteq [-2C,2C]$ is a uniformly bounded union of intervals. Note that, since 
\[\liminf_{n\to\infty} h_n |I_n| = \liminf_{n\to\infty}\mu(\Xi_n) > 0 \] and $h_n\le q_n$, we have that, by passing to a subsequence if necessary, there exists $E>1$ with
\[
E^{-1}\le q_n|I_n|\le 1.
\]
Hence, given the assumption on the derivative, we have 
\[
\textup{Leb}_{\R}\left(S_{q_n}f(\Xi_n)\right)\ge \frac{1}{CE}>0,
\]
for every $n\in\N$. 
By passing to a subsequence if necessary, we may assume that the sets $S_{q_n}f(\Xi_n)$ have a nonempty intersection. Let $y\in \bigcap_{i=1}^{\infty} S_{q_n}f(\Xi_n)$. We claim that $y$ is an essential value. For this purpose, we will construct a subtower $\Xi_{n}^{y}\subseteq \Xi_n$, depending on $\epsilon>0$, such that \eqref{cond:measure} and \eqref{cond:BS} in Theorem \ref{thm: FUcriterion} is satisfied. This is enough, since \eqref{cond:rigidity} and \eqref{cond:almost_invariant} are automatically satisfied by any sequence of subtowers of $\Xi_n$. 

Fix $\epsilon>0$ and consider the point {$x_n \in \Xi_n$ such that $S_{q_n}f(x_n)=y$}. 
Let $\ell_n\in\{0,\ldots,h_n-1\}$ be such that $x_n\in T^{\ell_n}(I_n)$ and assume WLOG that $\ell_n<h_{n}/2$, the other case being treated symmetrically. Since each level of the tower is an interval then either 
\[\text{$\left(x_n,x_n+\frac{1}{\max\{C, D,E\} q_n}\right)\subseteq T^{\ell_n}(I_n)$ $\quad$ or $\quad$ $\left(x_n-\frac{1}{\max\{C,D,E\} q_n},x_n\right)\subseteq T^{\ell_n}(I_n)$}.
\]
Again, WLOG, let us assume that it is the former. Consider the tower 
\[
\Xi_n^y:=\bigsqcup_{i=0}^{\epsilon h_n/4CD} T^i\left(x_n,x_n+\frac{\epsilon}{2\max\{C,D,E\} q_n}\right)\subseteq \Xi_n.
\]
We now show that for every $x\in \Xi_n^y$ we have $S_{q_n}f(x)\in(y-\epsilon,y+\epsilon)$. If $x\in T^{\ell_n}(I_n)$, then by the mean value theorem, we have
\[
\left|S_{q_n}f(x)-y\right|=\left|S_{q_n}f(x)-S_{q_n}f(x_n)\right|\le Cq_n|x-x_n|\le \epsilon/2.
\]
If $x\in T^{j} (T^{\ell_n}(I_n))$ with $j=1,\ldots,\epsilon h_n/4CD$, then we 
split the Birkhoff sum into two pieces $S_{q_n}f(x) = S_{q_n - j}f (x) + 
S_jf(T^{q_n - j}(x))$, which we estimate separately. For the first term, we have
\[
\left|S_{q_n-j} f(x)-S_{q_n-j} f(T^j(x_n))\right|\le Cq_n|x-T^j(x_n)|\le\epsilon/2.
\]

For every $k\in\{0,\ldots,h_n/2\}$ we have that $T^{k}(x_n)$ and 
$T^{k}(T^{-j}(x))$ belong to the same level of $\Xi_n$ and, since $\Xi_n$ is a 
tower of intervals as such, belong to the same interval continuity of $f$. 
Moreover, by \eqref{cond:cont_interval},
 $T^k(T^{q_n-j}(x))=T^k(T^{q_n}(T^{-j}(x)))$ and 
$T^{k}(T^{-j}(x))$ belong to the same 
continuity interval of $f$ for every $k\in\{0,\ldots,h_n/2\}$. Hence 
$T^k(T^{q_n-j}(x))$ and $T^k(x_n)$ belong to the same continuity intervals of 
$f$. 
Hence, by \eqref{cond:BS} and mean value theorem we 
get
\[
\left|S_j f(T^{q_n-j}(x))-S_j f(x_n)\right|\le \frac{C\epsilon h_n}{4D}|T^{q_n-j}(x)-x_n|\le \frac{C\epsilon h_n}{4CD} \frac{2D}{q_n}\le\epsilon/2
\]
and thus $|S_{q_n} f(x)-y|\le \epsilon$. It remains to notice that 
\[
\liminf_{n\to\infty}\textup{Leb}(\Xi_n^y)\ge \liminf_{n\to\infty}\frac{\epsilon h_n}{4CD}\frac{1}{\max\{C,D,E\}}|I_n|
\ge \liminf_{n\to\infty} \frac{1}{4(\max\{C,D,E\})^2}\textup{Leb}(\Xi_n)>0.
\]
Thus we have proved that $y\in Ess(T_f)$. Note that for every $n\in\N$ and every 
\[
x\in \left(x_n,x_n+\frac{1}{2\max\{C,D,E\} q_n}\right).
\]
Hence, 
\[
S_{q_n}f(x)-S_{q_n}f(x_n)\ge C^{-1}q_n |x-x_n|.
\]
Therefore, by applying similar reasoning as to $y$, we obtain that every $z\in\left[y,y+\frac{1}{2C^{-1}\max\{C,D,E\}}\right]$ is an essential value of $T_f$. This finishes the proof of the proposition.
\end{proof}
\section{Proofs of main results.}

This section contains the proof of Theorems \ref{thm: ergo}, \ref{thm: uniqergo} and \ref{thm: index}. In all three proofs, we will apply the ergodicity criterion described in Proposition \ref{prop: effcriterion}. For this reason, we start this section by outlining a construction that will be common to all of the proofs since it concerns only the underlying IET $T$ and not the cocycle being considered. At the end of this construction, we will describe in detail why the assumptions of Proposition \ref{prop: effcriterion} are fulfilled in each setting.

Throughout this section, let $T=(\pi,\lambda): I \to I$ be an ergodic symmetric IET on $d = \#\A$ intervals $\{I_\alpha\}_{\alpha \in \A}$.

By Corollary \ref{cor:centers_connections}, there exists $\beta \in \A$ such that $c_{\beta}$ is not a part of any connection of $T$. By Lemma \ref{lem:sym_int_disjoint}, there exists $\alpha \in \A$ and a nested sequence of $\beta$-symmetric intervals $\{J_n\}_{n \in \N}$ disjoint from the connections of $T$, with endpoints $T^{-m_n}(\partial I_{\alpha})$ and $T^{m_n}(\partial I_{\hat\alpha})$ for some $m_n \nearrow \infty$, where $\pi_0(\hat\alpha) = \pi_0(\alpha)-1$, and such that $\{c_\beta\} = \bigcap_{n \in \N} J_n$.

By Proposition \ref{prop:induced_symmetric}, for every $n\in\N$, the induced IET $T_{J_n}$ is a symmetric IET with $d - d'$ intervals, where $d'$ is the number of non-trivial connections of $T$. WLOG we may index their exchanged intervals using the same alphabet $\B \subseteq \A$. Let us denote by $\{I_\gamma^n\}_{\gamma \in \B}$ the intervals exchanged by $T_{J_n}$ and by $\{c_\gamma^n\}_{\gamma \in \B}$ their middle points, where, to avoid the use of double subscripts, we changed our usual notation $I_\gamma^{J_n}$ (resp. $c_\gamma^{J_n}$) to $I_\gamma^n$ (resp. $c_\gamma^n$).

By Proposition \ref{prop:induced_centers}, each of the towers in the decomposition of $I$ associated with $T_{J_n}$ 
\begin{equation}
\label{eq:decomposition_sequence}
I = \bigsqcup_{\gamma \in \B} \bigsqcup_{i = 0}^{h_\gamma^n - 1} T^i(I_\gamma^n),
\end{equation}
where $h^n = (h^n_\gamma)_{\gamma \in \B}$ is some vector in $\N^\A_+$ (see Section \ref{sc:induced_IETs}), contains exactly one point from the set
\[ \{c _\sigma \mid \sigma \in \A \cup \{\tfrac{1}{2}\},\, \sigma \neq \beta \text{ and } c_\sigma \text{ does not belong to any non-trivial connection}\}, \]
in the middle of its \emph{central level} $T^{\lfloor h_\gamma^n/2 
\rfloor}(I^n_\gamma)$. More precisely, for every $\gamma \in \B$ there exists 
$\sigma$ in the set above such that the middle point $c_\gamma^n$ of the 
interval $I_\gamma^n$ verifies $c_\sigma = T^{\lfloor h_\gamma^n/2 
\rfloor}(c_\gamma^n).$

Using these facts, we will show how to build towers $\{\Xi_n\}_{n\in\N}$ and a sequence $\{q_n\}_{n\in\N}$ that satisfies the assumptions of Proposition \ref{prop: effcriterion}.

A natural approach would be to consider subtowers of the already constructed towers. However in their current form, the towers may be very unbalanced: the wide towers may be very short and thus of very small measure, while thin towers may be very tall and contain the majority of the interval $I$. Since we need to construct towers of measure bounded away from 0, we would have to choose them to be inside of the thin towers. This however makes it very difficult to control the rigidity of $T$ inside those towers as well as to estimate the values of Birkhoff sums. We tackle this problem by jumping between the points around which we induce.

Consider a Rokhlin tower $X_n:=\bigsqcup_{i=0}^{h_{\gamma_n}^n-1}T^i(I^n_{\gamma_n})$, where $\gamma_n \in\mathcal \B$ is chosen so that its Lebesgue measure is the largest compared to the other towers in the decomposition \eqref{eq:decomposition_sequence}. In particular
\[
\textup{Leb}(X_n)\geq \frac{1}{\#\B}\ge\frac{1}{\#\A}.
\]

Let us denote by $\mathfrak J_n$ the central level of this tower and recall that it contains a point $c_\sigma$ for some $\sigma \in \A \cup \{\tfrac{1}{2}\}$. By Proposition \ref{prop:induced_symmetric}, the induced transformation $T_{\mathfrak J_n}$ is a symmetric IET with $d - d'$ intervals, and we denote its exchanged intervals by $\{\mathfrak I_{\gamma}^n\}_{\gamma \in \mathcal B}$. As before, we have a decomposition in Rokhlin towers of the form
\[ I = \bigsqcup_{\gamma \in \B} \bigsqcup_{i = 0}^{\mathfrak h_\gamma^n - 1} T^i(\mathfrak I_\gamma^n).\]

Let $\Gamma_n \in \B$ be such that $\mathfrak I_{\Gamma_n}^n$ is the largest of all intervals exchanged by $T_{\mathfrak J_n}$. Up to taking a subsequence, let us assume WLOG that there exists $\Gamma \in \B$ such that $\Gamma_n = \Gamma$, for every $n \in \N$.

As before, by Proposition \ref{prop:induced_centers}, the tower $\bigsqcup_{i=0}^{\mathfrak h_{\Gamma}^n-1}T^i(\mathfrak I_{\Gamma}^n)$ contains exactly one point of the form $c_{\sigma}$ for some $\sigma \in \A \cup \{\tfrac{1}{2}\}$ in the middle of its central level. Let us denote this point by $\mathfrak c_n$. 

Define 
\[
\Xi_n:=\bigsqcup_{i=0}^{h_{\gamma_n}^n/2-1}T^{i}(\mathfrak I_{\Gamma}^n)\qquad \text{and}\qquad q_n:=\mathfrak h_{\Gamma}^n.
\]

Before passing to the proofs of each of the theorems, let us show that $\{\Xi_n\}_{n\in\N}$ and $\{q_n\}_{n\in\N}$ satisfy the assumptions \eqref{cond:sequences} and \eqref{cond:cont_interval} in Proposition \ref{prop: effcriterion}. 

We start by showing that \eqref{cond:sequences} in Proposition \ref{prop: effcriterion} is satisfied, that is, that the sequences above verify \eqref{cond:measure}-\eqref{cond:almost_invariant} in Theorem \ref{thm: FUcriterion}.

First, we can easily check that $\{\Xi_n\}_{n \in \N}$ satisfies \eqref{cond:measure} in Theorem \ref{thm: FUcriterion}. Indeed, 
\begin{align*}
\textup{Leb}(\Xi_n) & =(h_{\gamma_n}^n/2)|\mathfrak I_{\Gamma}^n| \\ & > \frac{1}{\#\A}(h_{\gamma_n}^n/2)|\mathfrak J_n|=\frac{1}{\#\A}(h_{\gamma_n}^n/2)|I^n_{\gamma}| \\
& > \frac{1}{3\#\A} h_{\gamma_n}^n|I^n_{\gamma}|=\frac{1}{3\#\A} \textup{Leb}(X_n) \\
& >\frac{1}{3\#\A^2}.
\end{align*}

To see that $\{\Xi_n\}_{n\in\N}$ satisfy \eqref{cond:almost_invariant} in Theorem \ref{thm: FUcriterion} it is enough to notice that 
\[
\mu(\Xi_n\triangle T(\Xi_n))\le 2|\mathfrak I_{\Gamma}^n|\to 0 \text{ as }n\to\infty. 
\]

We will now show that 
\begin{equation}\label{eq: derbound}
\sup_{x\in\Xi_n}|T^{q_n}(x)-x|<D/q_n\text{ for some }D>1
\end{equation}
 thus showing \eqref{cond:rigidity} in Theorem \ref{thm: FUcriterion} as well. The argument uses the same observation as the one used to prove \eqref{cond:cont_interval} in Proposition \ref{prop: effcriterion}, which is the following. 
 
 Let $j\in\{0,\ldots,h_{\gamma_n}^n/2\}$ and let $x\in T^j(\mathfrak I_{\Gamma}^n)$. Then $T^{q_n-j}(x)\in \mathfrak J_n$. However, $\mathfrak J_n$ is the middle level of the tower $X_n$. Hence we have that $x$
 and $T^{q_n}(x)=T^{j}(T^{q_n-j}(x))$ belong to the same level of the tower $X_n$. In particular, they belong to a continuity interval of $T$ (and as such to a continuity interval of any function continuous over exchanged intervals). Moreover, we have
 \[
 |T^{q_n}(x)-x|\le |\mathfrak J_n|\le \#\A|\mathfrak I_{\Gamma}^n|=
 \#\A\frac{1}{q_n} q_n|\mathfrak I_{\Gamma}^n|\le \frac{\#\A}{q_n}.
 \]
 Thus Conditions \eqref{cond:sequences} and \eqref{cond:cont_interval} in Proposition \ref{prop: effcriterion} are satisfied.

In the following proofs, we will check that the remaining assumptions in Proposition \ref{prop: effcriterion}, namely, Conditions \eqref{cond:continuity} and \eqref{cond:derivative}, are satisfied for the different cocycles considered in each setting. In view of the construction above this is enough to apply Proposition \ref{prop: effcriterion} and conclude the ergodicity of the skew product under consideration.

 \begin{proof}[Proof of Theorem \ref{thm: ergo}]
 We assume that $a>0$ since the other case follows symmetrically. We will apply Proposition \ref{prop: effcriterion} for $m=1$. Since $f(x)=a(x-\tfrac{1}{2})$ is continuous, \eqref{cond:continuity} in \ref{prop: effcriterion} is satisfied. We now show that \eqref{cond:derivative} is satisfied.

 First, trivially, we have
 \[
 f'(x)=a\quad\text{ for }x\in I.
 \]
 In particular
 \[
 S_{q_n} f'(x)=aq_n\quad \text{ for }x\in I.
 \]
 Recall that the tower $\bigsqcup_{i=0}^{\mathfrak h_{\Gamma}^n-1}T^i(\mathfrak I_{\Gamma}^n)$ has $\mathfrak c_n$ as its central point. By construction, the first visit time of $\mathfrak c_n$ via $T^{-1}$ to $\mathfrak J_n$ is $q_{n}/2$ or $(q_n+1)/2$. In both cases, by Lemma \ref{lem: berktrujillo}, 
 \[
 S_{q_n}f(T^{-\lfloor (q_n+1)/2\rfloor}(\mathfrak c_n))=0.
 \]
 Since $S_{q_n}f$ is continuous in $\mathfrak I_{\Gamma}^n$ and $q_n|\mathfrak I_{\Gamma}^n|<1$, by the mean value theorem we have that 
 \[
 \left|S_{q_n}f(x)\right|<a\quad \text{ for every }x\in \mathfrak I_{\Gamma}^n.
 \]
 If $x\in T^j(\mathfrak I_{\Gamma}^n)$ for $j\in \{1,\ldots,h_{\gamma_n}^n/2\}$, then by \eqref{eq: derbound} and, again, by the mean value theorem,
 \[
 \left|S_{q_n}f(x)-S_{q_n}f(T^{-j}(x)) \right|=
 \left|S_j f(T^{q_n-j}(x))-S_j f(T^{-j}(x)) \right|
 \le a D.
 \]
 Thus we get 
 \[
 \left|S_{q_n}f(x)\right|<a(D+1) \quad \text{ for every }x\in\Xi_n.
 \]
 Since the bound does not depend on $n$, \eqref{cond:derivative} is satisfied and, by Proposition \ref{prop: effcriterion}, the skew product $T_f$ is ergodic. 
 \end{proof}
 \begin{proof}[Proof of Theorem \ref{thm: uniqergo}.]
The only real difference between the proof of this result and the proof of Theorem \ref{thm: ergo} is the condition on the derivative. However, since $T$ is now uniquely ergodic, we have 
\[
\frac{1}{q_n}\sum_{i=0}^{q_n-1}f_0'\circ T^i\to 0 \text{ uniformly.}
\]
Thus for any $\varepsilon>0$ and $n$ large enough we have 
\[
(a-\varepsilon)q_n \le S_{q_n} f' (x)\le (a+\varepsilon)q_n \quad \text{ for }x\in I.
\]
By taking $\varepsilon<|a/2|$, we get the desired condition on the derivative. The rest of the proof follows analogously to the proof of Theorem \ref{thm: ergo}.
\end{proof}
\begin{proof}[Proof of Theorem \ref{thm: index}.] We use again Proposition \ref{prop: effcriterion}, this time for arbitrary $m\in\N$. We apply it to $T^{\times m}$ (which is ergodic by weak mixing) and to $f^{\times m}$. Condition \eqref{cond:continuity} in Proposition \ref{prop: effcriterion} is easily verified and Condition \eqref{cond:derivative} is satisfied in the same way as in the proof of Theorem \ref{thm: ergo}. As in the previous proofs, the ergodicity of $T_f$ follows from Proposition \ref{prop: effcriterion}.
\end{proof}

\section*{Acknowledgments} 
The authors would like to thank Krzysztof Frączek for suggesting the problem. The first author was supported by the NCN Grant 2022/45/B/ST1/00179.
 The second author was partially supported by the {\it Spanish State Research Agency}, through the {\it Severo Ochoa and María de Maeztu Program for Centers and Units of Excellence in R$\&$D} (CEX2020-001084-M) and by the {\it European Research Council} (ERC) under the European Union's Horizon 2020 research and innovation programme Grant 101154283. The third author was partially supported by Swiss National Science Foundation through Grant 200021$\_$188617/1.

\bibliographystyle{acm}
\bibliography{Bibliography.bib}
\end{document}